\DeclareMathAlphabet{\pazocal}{OMS}{zplm}{m}{n}
\newtheorem{thm}{Theorem}[section]
\newtheorem{cor}[thm]{Corollary}
\newtheorem{lem}[thm]{Lemma}
\newtheorem{prop}[thm]{Proposition}
\theoremstyle{definition}
\newtheorem{defn}[thm]{Definition}
\newtheorem{rhp}[thm]{Riemann--Hilbert problem}
\theoremstyle{remark}
\newtheorem{rem}[thm]{Remark}
\numberwithin{equation}{section}
\newcommand{\set}[1]{\left\{#1\right\}}
\newcommand{\Real}[0]{\mathbb R}
\newcommand{\Compl}[0]{\mathbb C}
\newcommand{\eps}[0]{\varepsilon}
\newcommand{\re}{\,\mathfrak{Re}\,}
\newcommand{\im}{\,\mathfrak{Im}\,}
\newcommand{\tr}{\,\text{tr}\,}
\newcommand{\ii}{\mathrm{i}}
\def\res{\mathop{Res}}
\newcommand{\Pp}{\pazocal{P}^+}
\newcommand{\Pm}{\pazocal{P}^-}
\newcommand{\Ppm}{\pazocal{P}^{\pm}}
\newcommand{\rh}{Riemann--Hilbert problem }
\renewcommand\bf\bfseries
\begin{document}

\title{Global existence for the derivative NLS equation in the presence of solitons}
\author{Aaron Saalmann\thanks{A.S. gratefully acknowledges financial support from the projects ``Quantum Matter and Materials"
and SFB-TRR 191 ``Symplectic Structures in Geometry, Algebra and Dynamics" (Cologne University, Germany).}
\footnote{Mathematisches Institut, Universit\"{a}t zu K\"{o}ln, 50931 K\"{o}ln, Germany, e-mail: asaalman@math.uni-koeln.de}
}

\date{\today}
\maketitle
\begin{abstract}
   We prove the existence of global solutions to the DNLS equation with initial data in a large subset of $H^2(\Real)\cap H^{1,1}(\Real)$ containing a neighborhood of all solitons. We use the inverse scattering transform method, which was recently developed by D. Pelinovsky and Y. Shimabukuro, and an auto-B\"{a}cklund transform in order to include solitons.
\end{abstract}
\tableofcontents
\newpage
\section{Introduction}\label{s intro}
Consider the Cauchy problem for the derivative nonlinear Schr\"odinger (DNLS) equation
\begin{equation}\label{e dnls}
    \left\{
      \begin{array}{ll}
        \ii u_t+u_{xx}+\ii(|u|^2u)_x=0,\\
        u|_{t=0}=u_0,
      \end{array}
    \right.
\end{equation}
on $\Real$, where $u(x,t):\Real\times\Real\to\Compl$. Subscripts denote partial derivatives. In this paper we will prove the following global existence result:
\begin{thm}\label{t main}
    There exists an open subset $\pazocal{G}\subset H^{1,1}(\Real)$ such that if $u_0$ belongs to $H^2(\Real)\cap \pazocal{G}$, then there exists a unique global solution $u(\cdot,t)\in H^2(\Real)\cap \pazocal{G}$ of the Cauchy problem (\ref{e dnls}) for every $t\in\Real$.
\end{thm}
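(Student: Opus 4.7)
The plan is to combine the inverse scattering machinery of Pelinovsky and Shimabukuro with an auto-B\"{a}cklund transformation so as to reduce the general case to the soliton-free one already handled in that work. Recall that Pelinovsky--Shimabukuro obtained, through a careful \rh analysis of the Kaup--Newell spectral problem, a unique global solution of (\ref{e dnls}) in $H^2(\Real)\cap H^{1,1}(\Real)$ whenever the associated scattering data carries \emph{no} eigenvalues. The task is therefore to enlarge the admissible class so as to allow a generic finite collection of discrete eigenvalues, corresponding to solitons.

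First, I would pin down $\pazocal{G}$ as the set of $u_0\in H^{1,1}(\Real)$ whose Kaup--Newell scattering data consists of a reflection coefficient $r\in H^1(\Real)\cap L^{2,1}(\Real)$ together with finitely many \emph{simple} eigenvalues $\{\lambda_k\}_{k=1}^N$ lying off the real axis and off the critical curve on which the jump matrix of the associated \rh degenerates, with non-vanishing norming constants. By continuous dependence of the scattering data on $u_0$ together with a Rouch\'e-type argument for the eigenvalues, this set is open in $H^{1,1}(\Real)$ and contains an $H^{1,1}$-neighbourhood of every pure $N$-soliton.

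Next, given $u_0\in H^2\cap\pazocal{G}$ with discrete data $\{(\lambda_k,c_k)\}_{k=1}^N$, I would apply the auto-B\"{a}cklund transformation $N$ times to peel off the eigenvalues one by one. This produces a soliton-free datum $\tilde{u}_0\in H^2\cap H^{1,1}$ to which Pelinovsky--Shimabukuro applies directly, yielding a unique global $\tilde{u}(\cdot,t)$ in that space. Because the transformation intertwines with the DNLS flow, and because the time evolution of the Lax eigenfunctions at the eigenvalues $\lambda_k$ is explicit and exponential, the inverse dressing applied fibrewise in $t$ reinserts the $N$ solitons and produces a global $u(\cdot,t)\in H^2\cap\pazocal{G}$ for every $t\in\Real$. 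Uniqueness is inherited from the bijectivity of the direct/inverse transform on the reduced class together with the one-to-one nature of the dressing.

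The central obstacle is controlling the B\"{a}cklund transformation in the right topology: one must show that the dressing map and its inverse are bounded from $H^2\cap H^{1,1}$ into itself, uniformly as the eigenvalues and norming constants vary over compact sets, and that they depend continuously on the input, so that both the openness of $\pazocal{G}$ and the well-posedness of the solution map $u_0\mapsto u(\cdot,t)$ descend from the radiative theory. A secondary but unavoidable technical issue is to rule out spectral singularities of the Kaup--Newell problem (zeros of the scattering coefficient $a(\lambda)$ on the critical contour), which forces $\pazocal{G}$ to avoid a codimension-one locus in $H^{1,1}(\Real)$ even though it contains a full neighbourhood of every soliton. Quantifying the dressing estimates in $H^2\cap H^{1,1}$ and preserving openness under the spectral exclusions will be the heart of the proof.
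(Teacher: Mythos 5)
Your overall strategy---reduce to the soliton-free inverse scattering theory of \cite{Pelinovsky2016} by means of an auto-B\"{a}cklund transformation---is the right one, but the place where you apply the transformation is the route of the companion paper \cite{PelinShimaSaal2017}, not of this one: you propose to act on the potential $u_0$ itself, peeling off the eigenvalues in physical space and re-dressing afterwards. The present paper deliberately does the opposite. It never removes solitons from $u_0$; instead it removes the pole from the \emph{Riemann--Hilbert data} by the Blaschke-type modification (\ref{e r^0}) of the reflection coefficients, solves the pole-free problem, and then dresses the RHP solution $m^{(0)}$ into $m^{(1)}$ by the explicit matrix formula of Lemma \ref{l solvability of RHP N=1}. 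The analytic work then consists of controlling the dressing matrix $A(x)$ (Proposition \ref{p A inverse}) and the values $m^{(0)}(z_1;\cdot)-1$ in $H^1\cap L^{2,1}$ on each half-line (Lemma \ref{l m(z0)-1 in H^11 and H^2}), which is what makes the induction over the number of poles close and yields the a priori bound (\ref{e u in H^2 and H^11 N=2,3,4...}). Your version would instead have to prove that the forward B\"{a}cklund map on potentials is bounded and invertible on $H^2\cap H^{1,1}$ and that it genuinely lands in the resonance-free, eigenvalue-free class; you correctly identify this as the central obstacle, but it is exactly the part you leave unproved, and it is the part this paper replaces by RHP-level estimates.

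The genuine gap is in your last step. Constructing $u(\cdot,t)$ ``fibrewise in $t$'' by running the inverse transform on the time-evolved scattering data does not by itself produce a solution of (\ref{e dnls}): the assertion that ``the transformation intertwines with the DNLS flow'' is precisely what needs proof, and nothing in your outline supplies it. Likewise, uniqueness of the PDE solution in $H^2(\Real)\cap H^{1,1}(\Real)$ cannot be ``inherited from the bijectivity of the direct/inverse transform''; it comes from the local well-posedness theory. The paper's proof of Theorem \ref{t main} avoids both issues: it takes the unique \emph{local} solution furnished by \cite{Tsutsumi1980,Hayashi1992}, notes that along this local solution the scattering data evolve by the explicit phase factors of Lemma \ref{l time dependence scattering data} (so that $\|r_{\pm}(\cdot,t)\|_{H^1\cap L^{2,1}}$ and $|c_k(t)|$ are constant in $t$), and then uses the inverse-scattering bound $\|u(\cdot,t)\|_{H^2\cap H^{1,1}}\leq C_M$ of Lemma \ref{l solvability of RHP N geq 2} to rule out blow-up at any finite $T_{\max}$. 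You need some version of this continuation argument---or an independent proof that the reconstructed family solves the equation---before your construction yields the theorem.
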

The spaces used in Theorem \ref{t main} are defined as follows.
\begin{equation*}
    H^k(\Real)=\set{u\in L^2(\Real),...,\partial_x^ku\in L^2(\Real)},\quad
    H^{1,1}(\Real)=\set{u\in L^{2,1}(\Real),\partial_xu\in L^{2,1}(\Real)},
\end{equation*}
where the weighted spaces $L^{2,s}(\Real)$ are defined by the norm
\begin{equation*}
    \|u\|_{L^{2,s}(\Real)}:=\left(\int_{\Real}\langle x\rangle^{2s}|u(x)|^2dx\right)^{1/2},\qquad  \langle x\rangle:=\sqrt{1+x^2}.
\end{equation*}
The question of global well-posedness of the DNLS equation (\ref{e dnls}) was an open problem for a long time. Local solvability in $H^s(\Real)$ with $s>3/2$ was shown in \cite{Tsutsumi1980}. Later in \cite{Tsutsumi1981}, the same authors presented a result on global solvability for $u_0\in H^2(\Real)$ under the assumption that the $H^1$ norm of $u_0$ is small. Similar global well-posedness results were proved in \cite{Hayashi1992,Hayashi1993},
where the authors work with $u_0\in H^1(\Real)$ and assume a small $L^2(\Real)$ norm. More than two decades later this upper bound on the $L^2(\Real)$ norm of the initial datum could be improved by \cite{Wu2013,Wu2015}. Only recently the authors of \cite{Hayashi2017} proved that there exist global solutions with any large $L^2(\Real)$ norm. They showed global existence of solutions for (\ref{e dnls}) with initial datum of the form $u_0=e^{icx}\psi$ where $\psi\in H^1(\Real)$ can be arbitrary and $c$ has to be chosen sufficiently large.
\medskip \\
None of the so far mentioned articles relies on the fact that the DNLS is formally solvable with the inverse scattering transform method. This structural property was discovered in \cite{KaupNewell1978}. The most extensive analysis of the Cauchy problem (\ref{e dnls}) using inverse scattering tools is certainly given by the series of papers \cite{Liu2016,LiuPerrySulem2017,LiuPerrySulem2017b}. In their first work the authors establish Lipschitz continuity of the direct and inverse scattering transform for the DNLS equation in appropriate function spaces and they prove global solvability for those initial data that are soliton-free. The second work is devoted to long-time behavior of solutions for soliton-free initial data. Therein it is proven that the amplitude of those solutions decays like $|t|^{-1/2}$ as $|t|\to\infty$. Using this dispersion result and including solitons the authors complete their studies in their third paper where they give a full description of the long-time behavior of the solutions. Moreover, the third paper contains a proof of global well-posedness under the same assumptions on the initial data as in our Theorem \ref{t main}. Other rigorous works on the inverse scattering transform in the context of the DNLS equation are given by \cite{Pelinovsky2016} (soliton-free case) and its complementing paper \cite{PelinShimaSaal2017} (finite number of eigenvalues). Whereas in \cite{Liu2016} a gauge equivalence of the DNLS with a related dispersive equation is used, in \cite{Pelinovsky2016} the direct scattering transformation is constructed for the DNLS equation itself. This technical difference leads to different spaces: $H^2(\Real)\cap H^{1,1}(\Real)$ is appropriate in \cite{Pelinovsky2016}, but in \cite{Liu2016} the space $H^{2,2}(\Real):=H^2(\Real)\cap L^{2,1}(\Real)$ is considered.
\medskip \\
In \cite{Liu2016,Pelinovsky2016} as well as in the present paper, the assumption $u_0\in H^2(\Real)\cap \pazocal{G}$ on the initial datum avoids resonances of the spectral problem (\ref{e Lax1}). But in contrast to the soliton-free case \cite{Liu2016,Pelinovsky2016}, the elements in $\pazocal{G}$ are allowed to admit eigenvalues of (\ref{e Lax1}). The set of eigenvalues
$\set{\lambda_1,...,\lambda_N}$ then corresponds to a particular multi-soliton, in whose neighborhood the solution $u(x,t)$ will be located. Since Theorem \ref{t main} is a natural extension of the main results in \cite{Liu2016,Pelinovsky2016} and, moreover, since our result is already covered by \cite{PelinShimaSaal2017} as well as by \cite{LiuPerrySulem2017b}, we cannot raise any claim of originality of the result itself. What makes this present paper new is the way how the existence of the inverse scattering map in the case of solitons is established. Whereas in \cite{LiuPerrySulem2017b} this technical issue is treated directly, we give a proof by adding successively more and more eigenvalues, see Lemma \ref{l solvability of RHP N=1}. For that purpose we use a B\"{a}cklund transformation found in \cite{Deift2011}, see (\ref{e Bäcklund for m^1}), and show that this transformation can be applied to the rigorous treatment of the DNLS equation. Technical statements such as Lemma \ref{l m(z0)-1 in H^11 and H^2} and Proposition \ref{p A inverse} become necessary and constitute the most original parts of our proof.

It shall be mentioned that a B\"{a}cklund transformation is also used in \cite{PelinShimaSaal2017}. But therein the transformation is applied directly to the solution $u$ in order to remove solitons.  Then by the solvability results from \cite{Liu2016,Pelinovsky2016} and the invertibility of the B\"{a}cklund transformation, the global well-posedness result follows. Hence, compared to  \cite{PelinShimaSaal2017}, the present paper does not only construct global solutions  of the DNLS equation for a large class of initial data but also solves the inverse scattering problem for those initial data.
\medskip \\
The paper is organized as follows. Section \ref{s scatt} contains the construction of the Jost functions and the definition of the scattering data for an initial datum $u_0\in H^2(\Real)\cap \pazocal{G}$. This section does not contain new results but follows closely \cite{Pelinovsky2016}.  At the end of Subsection \ref{ss scattering data} we formulate the Riemann--Hilbert problem as the starting point for the inverse scattering which is treated in Section \ref{s inverse sc} and \ref{s adding a pole}. For the convenience of the reader we inserted Section \ref{s solitons} where we shortly describe the phenomenon of solitary waves. Whereas Section \ref{s inverse sc} handles pure radiation solutions, in Section \ref{s adding a pole} we add a pole and obtain solutions in a neighborhood of a soliton. We split this procedure into two subsections since the cases $x>0$ and $x<0$ require different Riemann--Hilbert problems. Finally, in Section \ref{s proof} we use the local well-posedness theory in \cite{Tsutsumi1980} and \cite{Hayashi1992} and our estimates for the continuity of the inverse scattering to show that local solutions can be continued for all times.
\medskip \\
\\ 
\section{Direct scattering transform}\label{s scatt}
For a review of the scattering map for the DNLS equation we are going to follow closely \cite{Pelinovsky2016,Liu2016}. As pointed out in the pioneer work \cite{KaupNewell1978}, the DNLS equation is the compatibility condition for solutions $\psi\in\Compl^2$ of the linear system given by
\begin{equation}\label{e Lax1}
    \partial_x\psi=[-\ii\lambda^2\sigma_3+\lambda Q(u)]\psi
\end{equation}
and
\begin{equation}\label{e Lax2}
    \partial_t\psi=[-2\ii\lambda^4\sigma_3+ 2\lambda^3 Q(u)+\ii\lambda^2|u|^2\sigma_3- \lambda |u|^2Q(u) +\ii\lambda \sigma_3 Q(u_x)]\psi,
\end{equation}
where
\begin{equation*}
    Q(u)=
    \left[
      \begin{array}{cc}
        0 & u(x,t) \\
        -\overline{u}(x,t) & 0 \\
      \end{array}
    \right],\qquad
    \sigma_3=
    \left[
      \begin{array}{cc}
        1 & 0 \\
        0 & -1 \\
      \end{array}
    \right].
\end{equation*}
In this context the term \emph{compatibility condition} is chosen, because if the spectral parameter $\lambda$ is independent of $x$ and $t$, it can be shown that the formal equality of the mixed derivatives, $\partial_x\partial_t\psi=\partial_t\partial_x\psi$,  is equivalent to the statement that $u$ solves the DNLS equation (\ref{e dnls}).
\subsection{Jost functions}
It is natural to introduce solutions of (\ref{e Lax1}) which satisfy the same asymptotic behavior at infinity as solutions of the spectral problem (\ref{e Lax1}) in the case of vanishing potential $u\equiv 0$:
\begin{eqnarray*}
  \psi^{(-)}_1(\lambda;x)\sim
  \left(
    \begin{array}{c}
      1 \\
      0 \\
    \end{array}
  \right)e^{-\ii x\lambda^2}
  ,\qquad
  \psi^{(-)}_2(\lambda;x)\sim
  \left(
    \begin{array}{c}
      0 \\
      1 \\
    \end{array}
  \right)e^{\ii x\lambda^2}
  && \text{ as } x\to -\infty\\
  \psi^{(+)}_1(\lambda;x)\sim
  \left(
    \begin{array}{c}
      1 \\
      0 \\
    \end{array}
  \right)e^{-\ii x\lambda^2}
  ,\qquad
  \psi^{(+)}_2(\lambda;x)\sim
  \left(
    \begin{array}{c}
      0 \\
      1 \\
    \end{array}
  \right)e^{\ii x\lambda^2}
  && \text{ as } x\to+\infty.
\end{eqnarray*}
In order to have constant boundary conditions we introduce the \emph{normalized Jost functions} by
\begin{equation*}
    \varphi_{\pm}(\lambda;x)=\psi^{(\pm)}_1(\lambda;x) e^{\ii x\lambda^2},\qquad \phi_{\pm}(\lambda;x)=\psi^{(\pm)}_2(\lambda;x) e^{-\ii x\lambda^2},
\end{equation*}
such that we have
\begin{equation}\label{e asymptotics psi}
    \lim_{x\to\pm\infty}\varphi_{\pm}(\lambda;x)=e_1\quad\text{ and } \quad\lim_{x\to\pm\infty}\phi_{\pm}(\lambda;x)=e_2,
\end{equation}
where $e_1=(1,0)^T$ and $e_2=(0,1)^T$. The Jost functions are solutions of the following Volterra's integral equations
\begin{equation}\label{e volterra phi varphi}
    \begin{aligned}
        \varphi_{\pm}(\lambda;x)&=e_1 + \lambda \int_{\pm\infty}^x
        \left[
          \begin{array}{cc}
            1 & 0 \\
            0 & e^{2\ii\lambda^2(x-y)} \\
          \end{array}
        \right]
        Q(u(y))\varphi_{\pm}(\lambda;y)dy,\\
        \phi_{\pm}(\lambda;x)&=e_2 + \lambda \int_{\pm\infty}^x
        \left[
          \begin{array}{cc}
            e^{-2\ii\lambda^2(x-y)} & 0 \\
            0 & 1\\
          \end{array}
        \right]
        Q(u(y))\phi_{\pm}(\lambda;y)dy.
    \end{aligned}
\end{equation}
It can be shown that (\ref{e volterra phi varphi}) admit solutions $\varphi_-(\lambda;x)$ and $\phi_+(\lambda;x)$ for $\im(\lambda^2)>0$ and $\varphi_+(\lambda;x)$ and $\phi_-(\lambda;x)$ for $\im(\lambda^2)<0$. Moreover the dependence of $\lambda$ is analytic in the corresponding domains where the Jost functions exist.
However, due to the presence of $\lambda$ that multiplies the matrix $Q(u)$ in the linear equation (\ref{e Lax1}), standard fixed point arguments for (\ref{e volterra phi varphi}) are not uniform in $\lambda$. Therefore, in \cite{Pelinovsky2016} the authors worked out a transformation of the Kaup-Newell type spectral problem (\ref{e Lax1}) to a linear equation of the Zakharov-Shabat type. The idea of that kind of transformation can already be found in \cite{KaupNewell1978}. In what follows we are going to present this transformation and set
\begin{equation}\label{e def T1 Q1}
    T_1(\lambda;x)=
    \left[
      \begin{array}{cc}
        1 & 0 \\
        -\overline{u}(x) & 2\ii\lambda \\
      \end{array}
    \right],\qquad
    Q_1(u)=\frac{1}{2\ii}
    \left[
      \begin{array}{cc}
        |u|^2 & u \\
        -2\ii\overline{u}_x -\overline{u}|u|^2& -|u|^2 \\
      \end{array}
    \right],
\end{equation}
and
\begin{equation}\label{e def T2 Q2}
    T_2(\lambda;x)=
    \left[
      \begin{array}{cc}
        2\ii\lambda &-u(x) \\
        0 & 1 \\
      \end{array}
    \right],\qquad
    Q_2(u)=\frac{1}{2\ii}
    \left[
      \begin{array}{cc}
        |u|^2 & -2\ii u_x -u|u|^2 \\
        -\overline{u}& -|u|^2 \\
      \end{array}
    \right].
\end{equation}
Then, it is elementary to check that $z=\lambda^2$ and
\begin{equation}\label{e def M N}
    M_{\pm}(z;x)=T_1(\lambda;x)\varphi_{\pm} (\lambda;x),\quad
    N_{\pm}(z;x)=T_2(\lambda;x)\phi_{\pm}(\lambda;x)
\end{equation}
make (\ref{e volterra phi varphi}) equivalent to
\begin{equation}\label{e volterra M N}
    \begin{aligned}
        M_{\pm}(z;x)&=e_1 +  \int_{\pm\infty}^x
        \left[
          \begin{array}{cc}
            1 & 0 \\
            0 & e^{2\ii z(x-y)} \\
          \end{array}
        \right]
        Q_1(u(y))M_{\pm}(z;y)dy,\\
        N_{\pm}(z;x)&=e_2 +  \int_{\pm\infty}^x
        \left[
          \begin{array}{cc}
            e^{-2\ii z (x-y)} & 0 \\
            0 & 1\\
          \end{array}
        \right]
        Q_2(u(y))N_{\pm}(z;y)dy.
    \end{aligned}
\end{equation}
Note that the symmetries
\begin{equation}\label{e symmetries phi varphi 1}
    \varphi_{\pm}(\lambda;x)=
    \left[
      \begin{array}{cc}
        1 & 0 \\
        0 & -1 \\
      \end{array}
    \right]
    \varphi_{\pm}(-\lambda;x),\qquad
    \phi_{\pm}(\lambda;x)=
    \left[
      \begin{array}{cc}
        -1 & 0 \\
        0 & 1 \\
      \end{array}
    \right]
    \phi_{\pm}(-\lambda;x)
\end{equation}
make sure that (\ref{e def M N}) is well-defined.
Equations (\ref{e volterra M N}) are analogues to the integral equations known from the forward scattering for the NLS equation (see, e.g., \cite{Ablowitz2004}). If $Q_{1,2}(u)\in L^1(\Real)$, then, (\ref{e volterra phi varphi}) admit solutions $M_-(z;x)$ and $N_+(z;x)$ for $\im(z)>0$ and $M_+(z;x)$ and $N_-(z;x)$ for $\im(z)<0$. Moreover the dependence on $z$ is analytic in the corresponding domains where the Jost functions exist.
\begin{rem}
    The assumption $u\in H^{1,1}(\Real)$ in Theorem \ref{t main} is chosen such that $Q_{1,2}(u)\in L^1(\Real)$.
\end{rem}
Compared to (\ref{e volterra phi varphi}), in (\ref{e volterra M N}) there is no $\lambda$ which multiplies the integral. As a result, the Neumann series for (\ref{e volterra M N}) converge uniformly in $z$.
By means of the asymptotic expansion for large $z$ of the Jost functions, the potential $u$ can be reconstructed from $M_{\pm}$ and $N_{\pm}$, respectively (see \cite[Lemma 2]{Pelinovsky2016}). Furthermore, regularity properties of $M_{\pm}$ and $N_{\pm}$ are used in \cite{Pelinovsky2016} to prove regularity of the reflection coefficient $r_+$ and $r_-$ which we will define in (\ref{e def r pm}) in the next subsection on the Scattering data.
\subsection{Scattering data}\label{ss scattering data}
We recall that $\varphi_{\pm}(\lambda;x)e^{-\ii x\lambda^2}$ and $\phi_{\pm}(\lambda;x)e^{\ii x\lambda^2}$ are solutions of the spectral problem (\ref{e Lax1}) with boundary condition (\ref{e asymptotics psi}). Taking into account $\tr(\sigma_3)=\tr(Q)=0$ we find
\begin{equation}\label{e det phi psi =1}
    \lim_{x\to\pm\infty}\det[\varphi_{\pm} (\lambda;x)e^{-\ii x\lambda^2},\phi_{\pm}(\lambda;x)e^{+\ii x\lambda^2}]=1
\end{equation}
for all $\lambda^2\in\Real$ and $x\in\Real$. Thus, in particular $\varphi_{+}e^{-\ii x\lambda^2}$ and $\phi_{+}e^{\ii x\lambda^2}$ are linearly independent and by ODE theory they form a basis of the space of solutions of the spectral problem (\ref{e Lax1}). This enables us to express the "$-$" Jost functions in terms of the "$+$" Jost functions for every $\lambda^2\in\Real$ and $x\in\Real$. According to that, there exist coefficients $\alpha,\beta,\gamma,\delta$ which satisfy:
\begin{equation}\label{e scattering relation}
    \begin{aligned}
         \varphi_{-}(\lambda;x)e^{-\ii x\lambda^2}&&=&& \alpha(\lambda)\varphi_{+}(\lambda;x)e^{-\ii x\lambda^2} &&+&&\beta(\lambda) \phi_{+}(\lambda;x)e^{\ii x\lambda^2},\\
         \phi_{-}(\lambda;x)e^{\ii x\lambda^2}&&=&& \gamma(\lambda)\varphi_{+}(\lambda;x)e^{-\ii x\lambda^2} &&+&& \delta(\lambda) \phi_{+}(\lambda;x)e^{\ii x\lambda^2}.
    \end{aligned}
\end{equation}
The matrix
$
\left[
  \begin{array}{cc}
    \alpha & \beta \\
    \gamma & \delta \\
  \end{array}
\right]
$
is referred to as the \emph{transfer matrix} in the literature and (\ref{e scattering relation}) is called \emph{scattering relation}. By (\ref{e det phi psi =1}), we verify that the determinant of the transfer matrix equals one. By Cramer's rule we find
\begin{equation}\label{e def alpha beta}
    \begin{aligned}
        \alpha(\lambda)&=\det[\varphi_{-}(\lambda;x), \phi_{+}(\lambda;x)],\\[2pt]
        \beta(\lambda)&=\det[\varphi_{+} (\lambda;x)e^{-\ii x\lambda^2}, \varphi_{-}(\lambda;x)e^{-\ii x\lambda^2}].
    \end{aligned}
\end{equation}
Making again use of $\tr(\sigma_3)=\tr(Q)=0$, we justify that $\alpha$ and $\beta$ indeed do not depend on $x$. Moreover $\alpha$ can be analytically extended to the first and third quadrant, where $\im(\lambda^2)>0$, which follows from the analytic properties of the Jost functions $\varphi_{-}$, $\phi_{+}$ in this domain. Furthermore, from the symmetry
\begin{equation}\label{e symmetries phi varphi 2}
    \phi_{\pm}(\overline{\lambda};x)=
    \left[
      \begin{array}{cc}
        0 & -1 \\
        1 & 0 \\
      \end{array}
    \right]\overline{\varphi_{\pm}(\lambda;x)},
\end{equation}
which are direct consequences of integral equations (\ref{e volterra phi varphi}), we can derive from the scattering relation (\ref{e scattering relation}) the following conservation law:
\begin{equation}\label{e alpha^2+beta^2}
    \left\{
      \begin{array}{ll}
        |\alpha(\lambda)|^2+|\beta(\lambda)|^2=1, \quad\lambda\in\Real,\\
        |\alpha(\lambda)|^2-|\beta(\lambda)|^2=1, \quad\lambda\in\ii\Real.
      \end{array}
    \right.
\end{equation}
As pointed out in \cite{Pelinovsky2016} this is indicating that the DNLS equation combines elements of the focusing and as well of the defocusing cubic NLS equation.

We now continue with the definition of the reflection coefficient:
\begin{equation}\label{e def r}
    r(\lambda)=\frac{\beta(\lambda)}{\alpha(\lambda)}.
\end{equation}
This definition makes sense for every $\lambda^2\in\Real$, if $\alpha$ admits no zeros on $\Real\cup\ii\Real$, but we can not expect generally that $\alpha$ behaves like that. Therefore we define the following set:
\begin{equation}\label{e def no resonances}
    \pazocal{R}:=\set{u\in H^{1,1}(\Real):\exists A>0,|\alpha(\lambda)|>A\text{ for every }\lambda\in\Real\cup\ii\Real}
\end{equation}
Zeroes $\lambda\in\Real\cup\ii\Real$ of $\alpha$ are called \emph{resonances} in \cite{Pelinovsky2016}. Hence, the set $\pazocal{R}$ consists of those potentials, which do not admit resonances of the linear equation (\ref{e Lax1}). Let us assume from now on that $u\in \pazocal{R}$. Then,  we can rewrite the scattering relation (\ref{e scattering relation}) in the following way:
\begin{equation}\label{e alternative scattering relation}
    \Phi_+(\lambda;x)=\Phi_-(\lambda;x)(1+S(\lambda;x)), \quad\lambda^2\in\Real,
\end{equation}
where the matrices $\Phi_{\pm}$ and $S$ are given by
\begin{equation}\label{e def Phi}
    \Phi_+(\lambda;x):=
    \left[\frac{\varphi_{-}(\lambda;x)} {\alpha(\lambda)},\phi_{+}(\lambda;x)
    \right],\quad
    \Phi_-(\lambda;x):=
    \left[\varphi_{_+}(\lambda;x) ,\frac{\phi_{-}(\lambda;x)} {\overline{\alpha(\overline{\lambda})}}
    \right],
\end{equation}
and
\begin{equation}\label{e def S}
    S(\lambda;x):=
    \left\{
      \begin{array}{ll}
        \left[
          \begin{array}{cc}
            |r(\lambda)|^2 & \overline{r(\lambda)} e^{-2\ii x\lambda^2} \\
            r(\lambda) e^{2\ii x\lambda^2} & 0 \\
          \end{array}
        \right]
        , & \hbox{for }\lambda\in\Real,\vspace{2mm} \\ \left[
          \begin{array}{cc}
            -|r(\lambda)|^2 & -\overline{r(\lambda)} e^{-2\ii x\lambda^2} \\
            r(\lambda) e^{2\ii x\lambda^2} & 0 \\
          \end{array}
        \right]
        , & \hbox{for }\lambda\in\ii\Real.
      \end{array}
    \right.
\end{equation}
It is clear from the representation (\ref{e def alpha beta}) that $\alpha$ has an analytic continuation in the first and third quadrants of the $\lambda$ plane. Therefore the function $\Phi_+$ defined in (\ref{e def Phi}) can be continued analytically in the first and third quadrants, as long as there are no zeros $\lambda_0$ of the continuation of $\alpha$ with $\im(\lambda_0^2)>0$. Under the same assumption, the function $\Phi_-$ in (\ref{e def Phi}) can be analytically continued in the second and fourth quadrant. From now on we want to allow that $\alpha(\lambda)$ has finite many simple zeroes. That is $\alpha(\lambda_k)=0$ and $\alpha'(\lambda_k)\neq0$ for a finite number of pairwise different $\lambda_1,...,\lambda_N$ which are assumed to lie in the first quadrant. Note that, if $\alpha(\lambda_k)=0$, then also $\alpha(-\lambda_k)=0$. Henceforth, the continuations of $\Phi_{\pm}$ are merely meromorphic. They admit simple poles at the zeros of $\alpha$, since  $\alpha'(\lambda_k)\neq0$ for $k=1,...,N$. The prime denotes the derivative with respect to $\lambda$. We find:
\begin{equation*}
    \res_{\lambda=\pm\lambda_k}
    \Phi_+(\lambda;x)=
    \left[\frac{\varphi_{-}(\pm\lambda_k;x)} {\pm\alpha'(\lambda_k)},\;0\;
    \right].
\end{equation*}
By (\ref{e def alpha beta}), the meaning of the zeros of $\alpha$ is the following. If $\alpha(\lambda_k)=0$, then by (\ref{e def alpha beta}) the $\Compl^2$ vectors $\varphi_{-}(\lambda_k;x)e^{-\ii x\lambda_k^2}$ and $\phi_{+}(\lambda_k;x)e^{\ii x\lambda_k^2}$ are linear dependent for every $x\in\Real$. Hence,
\begin{equation}\label{e def gamma}
    \varphi_{-}(\pm\lambda_k;x)=\pm\gamma_k\; e^{2\ii x\lambda_k^2}\; \phi_{+}(\pm\lambda_k;x)
\end{equation}
for some complex constant $\gamma_k\in\Compl\setminus\set{0}$. We will refer to $\gamma_k$ as the \emph{norming constant}. The norming constants do not depend on $x$. Indeed, differentiating (\ref{e def gamma}) with respect to $x$ and using the fact that  $\varphi_{-}(\lambda_k;x)e^{-\ii x\lambda_k^2}$ and $\phi_{+}(\lambda_k;x)e^{\ii x\lambda_k^2}$ are solutions of the spectral problem (\ref{e Lax1}), we easily obtain $\partial_x \gamma_k=0$. Note also that due to the symmetry (\ref{e symmetries phi varphi 1}) the cases $+\lambda_k$ and $-\lambda_k$ do have the same norming constants upon a minus sign.  Combining (\ref{e def gamma}) and the above residue calculation we find
\begin{equation}\label{e res Phi}
    \res_{\lambda=\pm\lambda_k}
    \Phi_+(\lambda;x)=
    \left[\frac{\pm\gamma_k \; e^{2\ii x\lambda_k^2}} {\alpha'(\pm\lambda_k)}\phi_{+}(\pm\lambda_k;x),\;0\;
    \right]=
    \lim_{\lambda\to\pm\lambda_k}\Phi_+(\lambda;x)
    \left[
      \begin{array}{cc}
        0 & 0\\
        \frac{\gamma_k \; e^{2\ii x\lambda_k^2}} {\alpha'(\lambda_k)} & 0 \\
      \end{array}
    \right].
\end{equation}
Correspondingly, we can compute an analogue relation for the residue of $\Phi_-$ at $\pm\overline{\lambda}_k$.\\
By a theorem of complex analysis (see, e.g., \cite[Theorem 3.2.8]{Ablowitz2003}), the zeroes of $\alpha$ must be isolated. In addition, by \cite[Lemma 4]{Pelinovsky2016} we know $\alpha(\lambda)\to\alpha_{\infty}\neq 0$ as $|\lambda|\to\infty$. Thus, we conclude that the zeroes of $\alpha(\lambda)$ in the first quadrant form a finite set $\set{\lambda_1,...,\lambda_N}$. But the essential assumption $\alpha'(\lambda_k)\neq 0$ is generally not expectable and give rise to the following definition:
\begin{equation}\label{e def no eigenvalues}
    \pazocal{E}:=\set{u\in H^{1,1}(\Real):\alpha'(\lambda_k)\neq 0\text{ for all zeroes }\lambda_k\text{ of }\alpha\text{ with } \im(\lambda_k^2)>0}
\end{equation}
From now on, additionally to $u\in\pazocal{R}$,  we assume $u\in\pazocal{G}:=\pazocal{R}\cap\pazocal{E}$. The elements of $\pazocal{G}$ are called \emph{generic potentials} according to the classical paper \cite{Beals1984}. As remarked by the authors in \cite[Remark 5]{Pelinovsky2016}, we have $u\in\pazocal{G}$ if
\begin{equation*}\label{e estimate for no eigenv and reso}
    \|u\|_{L^2}^2+\sqrt{\|u\|_{L^1}( 2\|\partial_x u\|_{L^1}+\|u\|_{L^3}^3)}<1.
\end{equation*}
The set $\pazocal{G}$ is open and, moreover, dense in $H^{1,1}(\Real)$. Due to the availability of the transformation (\ref{e def M N}), this can be deduced from \cite{Beals1984} as explained in \cite[Proposition 4]{PelinShimaSaal2017}. However, any soliton or multi soliton is contained in $\pazocal{G}$. For those explicit solutions, the expression
\begin{equation*}
    \|u\|_{L^2}^2+\sqrt{\|u\|_{L^1}( 2\|\partial_x u\|_{L^1}+\|u\|_{L^3}^3)}
\end{equation*}
can be arbitrary large.
\medskip\\
Using the transformation (\ref{e def M N}) it is shown in \cite{Pelinovsky2016} that for $u\in H^2(\Real)\cap H^{1,1}(\Real)$ the following holds.
\begin{equation}\label{e def Phi infty}
    \Phi_{\pm}(\lambda;x)\to\Phi_{\infty}(x):=
    \left[
      \begin{array}{cc}
        e^{-\frac{1}{2\ii}\int^{+\infty}_x|u(y)|^2dy} & 0 \\
        0 & e^{-\frac{1}{2\ii}\int_{-\infty}^x|u(y)|^2dy} \\
      \end{array}
    \right]\quad\text{as }|\lambda|\to\infty.
\end{equation}
The limit has to be taken along a contour  in the corresponding domain of analyticity.

The alternative scattering relation (\ref{e alternative scattering relation}), the residue condition (\ref{e res Phi}) and finally the asymptotic behavior (\ref{e def Phi infty}) set up a \rh.
Since that \rh is somewhat unsuitable to show the existence of the inverse Scattering map, we turn again to the Zhakarov-Shabat type Jost functions $M_{\pm}$ and $N_{\pm}$ (see (\ref{e def M N}), which are functions of $z$, where we recall $z=\lambda^2$. Due to $\alpha(\lambda)=\alpha(-\lambda)$, it is alowed to define $a(z):=\alpha(\lambda)$. Of course, if $\pm\lambda_k\neq0$ are (simple) zeroes of $\alpha$, then $z_k:=\lambda_k^2$ is a (simple) zero of $a$. In order to transfer the jump condition (\ref{e alternative scattering relation}) to the Jost functions $M_{\pm}$ and $N_{\pm}$, one more explicit definition is needed:
\begin{equation}\label{e def P}
    P_{\pm}(z;x):=\frac{1}{2\ii\lambda}
    T_1(\lambda;x)T_2^{-1}(\lambda;x)N_{\pm}(z;x)=
    -\frac{1}{4z}
    \left[
      \begin{array}{cc}
        1 & u(x) \\
        -\overline{u}(x) & -|u(x)|^2-4z \\
      \end{array}
    \right]N_{\pm}(z;x).
\end{equation}
In \cite[Lemma 5]{Pelinovsky2016} it is shown, that there is no singularity in (\ref{e def P}) and moreover, $P_{\pm}(z;x)$ satisfy the following limits as $|\im(z)|\to\infty$ along a contour in the domains of their analyticity:
\begin{equation*}
    \lim_{|z|\to\infty}P_{\pm}(z;x)=
    \left(
      \begin{array}{c}
        0\\
        N_{\pm}^{\infty}(x) \\
      \end{array}
    \right).
\end{equation*}
Now we are ready to define the analogue of (\ref{e def Phi}). Instead of $\lambda\in\Real\cup\ii\Real$, now we have $z\in\Real$ and set
\begin{equation}\label{e def pi}
    \pi_+(z;x):=
    \left[\frac{M_{-}(z;x)} {a(z)},P_{+}(z;x)
    \right],\quad
    \pi_-(z;x):=
    \left[M_{_+}(z;x) ,\frac{P_{-}(z;x)} {\overline{a(\overline{z})}}
    \right].
\end{equation}
These definitions entail the following analogue of (\ref{e alternative scattering relation}) which can be checked by elementary calculations:
\begin{equation}\label{e jump of pi}
    \pi_+(z;x)=\pi_-(z;x)(1+R(z;x)), \quad z\in\Real.
\end{equation}
Herein the new jump matrix $R$ which includes new reflection coefficients $r_{\pm}$, is defined by
\begin{equation*}
    R(z;x):=
        \left[
          \begin{array}{cc}
            \overline{r}_+(z)r_-(z) & e^{-2\ii xz}\overline{r}_+(z) \\
            e^{2\ii xz}r_-(z) & 0 \\
          \end{array}
        \right].
\end{equation*}
The new reflection coefficients are given by
\begin{equation}\label{e def r pm}
    r_{+}(z):=-\frac{\beta(\lambda)} {2\ii\lambda\alpha(\lambda)},\quad
    r_{-}(z):=\frac{2\ii\lambda\beta(\lambda)} {\alpha(\lambda)},\quad z\in\Real.
\end{equation}
We have the following Lemma \cite{Pelinovsky2016}.
\begin{lem}\label{l r pm in H1 and L^21}
    If $u\in H^2(\Real)\cap H^{1,1}(\Real)\cap\pazocal{R}$, then $r_{\pm}\in H^1(\Real) \cap L^{2,1}(\Real)$.
\end{lem}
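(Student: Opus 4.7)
The plan is to reduce the study of the reflection coefficients $r_{\pm}(z)$ to known estimates on the Zakharov--Shabat type Jost functions $M_{\pm}$ and $N_{\pm}$ from (\ref{e def M N}), since by construction the estimates on $M,N$ are uniform in the spectral parameter $z=\lambda^{2}$ while the estimates on $\varphi,\phi$ are not. The assumption $u\in\pazocal{R}$ gives a strictly positive lower bound for $|a(z)|=|\alpha(\lambda)|$ on $\Real$, so both $1/a$ and its $z$-derivative are controlled by $a$ and $a'$ respectively. Consequently the claim reduces to showing that the two combinations $\beta(\lambda)/\lambda$ and $\lambda\beta(\lambda)$, viewed as functions of $z$, both belong to $H^{1}(\Real)\cap L^{2,1}(\Real)$. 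These combinations are well-defined because the symmetries (\ref{e symmetries phi varphi 1}) together with the scattering relation (\ref{e scattering relation}) force $\beta(-\lambda)=-\beta(\lambda)$, so the factor $\lambda^{\pm 1}$ produces no spurious singularity at $z=0$.

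The first step would be to derive integral formulas for $\alpha$ and $\beta$ by passing to the limit $x\to\pm\infty$ in (\ref{e def alpha beta}) and substituting the Volterra equations (\ref{e volterra phi varphi}). After applying the transformation (\ref{e def M N}) these take the schematic form
\begin{equation*}
a(z)-a_{\infty}=\int_{\Real}G_{a}(u,u_{x};y)\,M_{-}(z;y)\,dy,\qquad \beta(\lambda)=\lambda\int_{\Real}e^{-2\ii zy}\,G_{b}(u,u_{x};y)\,N_{+}(z;y)\,dy,
\end{equation*}
where $G_{a},G_{b}$ are polynomial in $u,\overline{u},u_{x}$ of the type produced by $Q_{1,2}$ in (\ref{e def T1 Q1})--(\ref{e def T2 Q2}). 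The key point is that all $z$-dependence has been moved either into $M_{-},N_{+}$, which are bounded in $L^{\infty}_{z}$ uniformly in $y$ by \cite{Pelinovsky2016}, or into the oscillatory factor $e^{-2\ii zy}$.

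From here a Plancherel-type argument applied to the oscillatory integral, combined with $G_{a},G_{b}\in L^{2}_{y}$ (which follows from $u\in H^{1,1}(\Real)$), yields $r_{\pm}\in L^{2}(\Real)$. For the weighted norm $r_{\pm}\in L^{2,1}(\Real)$, I would multiply by $z$ and integrate by parts once in $y$ against the phase $e^{-2\ii zy}$; the resulting $y$-derivative distributes across $G_{b}\,N_{+}$, and every resulting term is square-integrable by virtue of $u\in H^{1,1}(\Real)$ and the $y$-regularity of $N_{+}$ proved in \cite{Pelinovsky2016}. For the $H^{1}$-estimate I would differentiate the integral representation in $z$. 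This produces two kinds of contributions: a factor $-2\ii y$ coming from $\partial_{z}e^{-2\ii zy}$, which is absorbed by the weight built into $u\in L^{2,1}(\Real)$; and a contribution $\partial_{z}N_{+}(z;y)$, which after an integration by parts in $y$ requires control of $u_{xx}$ and is therefore the place where the hypothesis $u\in H^{2}(\Real)$ enters. Together with the $z$-Lipschitz estimates for $M_{\pm},N_{\pm}$ from \cite{Pelinovsky2016}, this closes the argument.

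The main obstacle is the $H^{1}$-estimate. When differentiating $\beta(\lambda)$ in $z$, one is forced either to lose a power of $\langle y\rangle$ through the oscillatory phase or to pay an additional derivative on $u$ via $\partial_{z}N_{+}$, and it is precisely the combined hypothesis $u\in H^{2}(\Real)\cap H^{1,1}(\Real)$ that makes both options affordable simultaneously. The technical heart of the proof is therefore the mapping property $u\mapsto\partial_{z}N_{+}$ into a space compatible with the oscillatory integral, for which the quadratic structure in $u,u_{x}$ of $Q_{1,2}$ in (\ref{e def T1 Q1})--(\ref{e def T2 Q2}) has to be unpacked carefully.
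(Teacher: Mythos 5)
The paper does not prove this lemma itself but defers entirely to \cite{Pelinovsky2016}, where the argument runs exactly as you outline: Wronskian/integral representations of $\alpha$ and $\beta$ in terms of the Zakharov--Shabat type Jost functions $M_{\pm},N_{\pm}$ of (\ref{e def M N}) (whose Neumann series converge uniformly in $z$), followed by Fourier--Plancherel estimates in which $u\in H^{1,1}$ pays for the weights and $u\in H^{2}$ pays for the extra power of $z$ forced by $r_{-}=4zr_{+}$, with the lower bound on $|a|$ from $u\in\pazocal{R}$ handling the division by $\alpha$. Your plan is consistent with that approach, including the correct observation that the oddness of $\beta$ in $\lambda$ removes the apparent singularity of $\beta/\lambda$ at $z=0$.
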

Moreover, we found directly from the definition (\ref{e def r pm}) that $r_+$ and $r_-$ are connected by
\begin{equation}\label{e relation r+ r-}
    r_-(z)=4zr_+(z),\quad z\in\Real.
\end{equation}
Furthermore, $\overline{r}_+(z)r_-(z)=|r(\lambda)|^2$ if $z>0$, whereas $\overline{r}_+(z)r_-(z)=-|r(\lambda)|^2$ if $z<0$. Additionally, using (\ref{e alpha^2+beta^2}) we obtain $1-|r(\lambda)|^2=|\alpha(\lambda)|^{-2}$. Thus, we have
\begin{equation}\label{e r constraint}
   \left\{
     \begin{array}{ll}
       1+\overline{r}_+(z)r_-(z)\geq1, & z>0, \\
       1+\overline{r}_+(z)r_-(z)\geq c_0^2, & z<0,
     \end{array}
   \right.
\end{equation}
where $c_0^{-1}:=\sup_{\lambda\in\ii\Real}|\alpha(\lambda)|$. The constraint (\ref{e r constraint}) is used in \cite{Pelinovsky2016} to obtain a unique solution to the \rh \ref{rhp m} below.\par
Analytic continuations of $\pi_{\pm}$ in $\Compl^{\pm}$ exist if there is no $z\in\Compl$ such that $a(z)=0$. Otherwise we have analogously to (\ref{e def gamma})
\begin{equation*}
    M_{-}(z_k;x)=2\ii\lambda_k\gamma_k\; e^{2\ii x z_k}\; P_{+}(z_k;x)
\end{equation*}
with the same $\gamma_k$ as in (\ref{e def gamma}). Denoting the meromorphic continuations of $\pi_{\pm}(\cdot;x)$ with the same letters we can verify the following residue condition:
\begin{equation}\label{e res pi}
    \res_{z=z_k}\pi_+(z;x)=\lim_{z\to z_k}\pi_+(z;x)
          \left[
            \begin{array}{cc}
              0 & 0 \\
              2\ii\lambda_kc_k e^{2\ii xz_k} & 0
            \end{array}
          \right],
\end{equation}
where we set $c_k:=\gamma_k/a'(z_k)$. Correspondingly we can compute an analogue relation for the residuum of $\pi_-$ at $\overline{z}_k$. Next, we have
\begin{equation*}
    \pi_{\pm}(\lambda;x)\to\Phi_{\infty}(x)\quad\text{as }|\lambda|\to\infty,
\end{equation*}
similarly to (\ref{e def Phi infty}).
We obtain our final Riemann--Hilbert problem if we normalize the boundary condition at infinity:
\begin{equation}\label{e def m}
    m(z;x):=
    \left\{
      \begin{array}{ll}
        \,[\Phi_{\infty}(x)]^{-1}\pi_+(z;x), & z\in\Compl^+, \\
        \,[\Phi_{\infty}(x)]^{-1}\pi_-(z;x), & z\in\Compl^-.
      \end{array}
    \right.
\end{equation}
The multiplication from the left by the diagonal matrix $[\Phi_{\infty}(x)]^{-1}$ changes neither the analytic properties of $\pi_{\pm}$ nor the jump or residuum conditions. Therefore, the function $m$ defined in (\ref{e def m}) solves the following Riemann--Hilbert problem:
\begin{samepage}
\begin{framed}
\begin{rhp}\label{rhp m}
Find for each $x\in\Real$ a $2\times 2$-matrix valued function $\Compl\ni z\mapsto m(z;x)$ which satisfies
\begin{enumerate}[(i)]
  \item $m(z;x)$ is meromorphic in $\Compl\setminus\Real$ (with respect to the parameter $z$).
  \item $m(z;x)=1+\mathcal{O}\left(\frac{1}{z}\right)$ as $|z|\to\infty$.
  \item The non-tangential boundary values $m_{\pm}(z;x)$ exist for $z\in\Real$ and satisfy the jump relation
      \begin{equation}\label{e jump}
        m_+=m_-(1+R),\quad\text{where }
        R(z;x):=
        \left[
          \begin{array}{cc}
            \overline{r}_+(z)r_-(z) & e^{-2\ii xz}\overline{r}_+(z) \\
            e^{2\ii xz}r_-(z) & 0 \\
          \end{array}
        \right]
      \end{equation}
  \item $m$ has simple poles at $z_1,...,z_N,\overline{z}_1,...,\overline{z}_N$ with
      \begin{equation*}
        \begin{aligned}
          \res_{z=z_k}m(z;x)&=\lim_{z\to z_k}m(z;x)
          \left[
            \begin{array}{cc}
              0 & 0 \\
              2\ii\lambda_kc_k e^{2\ii xz_k} & 0
            \end{array}
          \right],\\
          \res_{z=\overline{z_k}}m(z;x)&=\lim_{z\to \overline{z}_k}m(z;x)
          \left[
            \begin{array}{cc}
              0 & \frac{-\overline{c}_k}{2\ii\lambda_k} e^{-2\ii x\overline{z}_k} \\
              0 & 0
            \end{array}
          \right].
        \end{aligned}
      \end{equation*}
\end{enumerate}
\end{rhp}
\end{framed}
\end{samepage}
We will use the notation
\begin{equation*}
    \mathcal{S}(u)=\set{r_{\pm};\lambda_1,... ,\lambda_N;c_1,...,c_N}
\end{equation*}
and call $\mathcal{S}$ the scattering data of $u$. They consist of the \emph{reflection coefficients} $r_{\pm}$ which satisfy the constraints (\ref{e relation r+ r-}) and (\ref{e r constraint}), the \emph{poles} $z_k:=\lambda_k^2$ and the \emph{norming constants} $c_k=\gamma_k/a'(z_k)$. $\mathcal{S}$ is all information we need to know about $u$ to formulate the \rh\ref{rhp m}. In the rest of this paper we treat the problem to define the inverse map $\set{r_{\pm};\lambda_1,... ,\lambda_N;c_1,...,c_N}\mapsto u$. Therefore we will solve \rh \ref{rhp m} and apply the following reconstruction formulas:
\begin{equation}\label{e rec 1}
    u(x)e^{\ii\int_{+\infty}^x|u(y)|^2dy}= -4\lim_{|z|\to\infty}z\;[m(z;x)]_{12}
\end{equation}
and
\begin{equation}\label{e rec 2}
    e^{-\frac{1}{2\ii}\int_{+\infty}^x|u(y)|^2dy} \partial_x\left(\overline{u}(x) e^{\frac{1}{2\ii}\int_{+\infty}^x|u(y)|^2dy}\right) =2\ii\lim_{|z|\to\infty}z\;[m(z;x)]_{21}.
\end{equation}
Both, (\ref{e rec 1}) and (\ref{e rec 2}), are justified in \cite{Pelinovsky2016} and the key of Inverse Scattering. By $[\cdot]_{ij}$ we denote the $i$-$j$-component of the matrix in the brackets.\\
The miraculous fact about the forward scattering is the trivial time evolution of the scattering data if the potential $u(x,t)$ evolves accordingly to the DNLS equation:
\begin{lem}\label{l time dependence scattering data}
    Under the assumption that an initial datum $u_0\in H^2(\Real)\cap H^{1,1}(\Real)\cap \pazocal{G}$ admits a (local) solution $u(\cdot,t)\in H^2(\Real)\cap H^{1,1}(\Real)$ to the Cauchy problem (\ref{e dnls}) for $t\in[0,T]$, the scattering data of $u(\cdot,t)$ are given by
    \begin{equation}\label{e time dependence scattering data}
        \mathcal{S}_t(u)=\set{r_{\pm}(z;t)=r_{\pm}(z;0) e^{4\ii z^2t};\lambda_1,... ,\lambda_N;c_1(0) e^{4\ii \lambda_1^4t},...,c_N(0) e^{4\ii \lambda_N^4t}},
    \end{equation}
    where
    \begin{equation*}
        \mathcal{S}_0(u)=\set{r_{\pm}(z;0);\lambda_1,... ,\lambda_N;c_1(0),...,c_N(0)}
    \end{equation*}
    are defined to be the scattering data of $u_0$. In particular, the set $\pazocal{G}$ is invariant under the flow of the DNLS equation, $r_{\pm}(\cdot;t)\in H^1(\Real)\cap L^{2,1}(\Real)$ for every $t\in[0,T]$, and, finally, (\ref{e relation r+ r-}) and (\ref{e r constraint}) remain valid.
\end{lem}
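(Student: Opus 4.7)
The strategy is to exploit that (\ref{e dnls}) is the compatibility condition of the full Lax pair (\ref{e Lax1})--(\ref{e Lax2}) by introducing \emph{time-corrected} Jost functions that solve \emph{both} Lax equations, and then to read off the time evolution of the scattering data from the fact that these corrected functions are linked by coefficients independent of both $x$ and $t$. Since the $t$-Lax matrix $V$ reduces to $-2\ii\lambda^{4}\sigma_{3}$ at $x\to\pm\infty$, the natural definitions are
\begin{equation*}
\widetilde{\psi}^{(\pm)}_{1}(\lambda;x,t):=\psi^{(\pm)}_{1}(\lambda;x,t)\,e^{-2\ii\lambda^{4}t},\qquad \widetilde{\psi}^{(\pm)}_{2}(\lambda;x,t):=\psi^{(\pm)}_{2}(\lambda;x,t)\,e^{+2\ii\lambda^{4}t}.
\end{equation*}
To verify that each $\widetilde{\psi}$ satisfies (\ref{e Lax2}), set $\chi:=\partial_{t}\widetilde{\psi}-V\widetilde{\psi}$; differentiating $\partial_{x}\widetilde{\psi}=U\widetilde{\psi}$ in $t$ and invoking the zero-curvature identity $U_{t}-V_{x}+[U,V]=0$, which is equivalent to (\ref{e dnls}), yields $\partial_{x}\chi=U\chi$. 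By construction $\chi\to 0$ in the respective asymptotic limit, so ODE uniqueness for the linear system $\chi_{x}=U\chi$ forces $\chi\equiv 0$.

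With both Lax equations in force, rewrite $\psi^{(-)}_{1}=\alpha(t)\psi^{(+)}_{1}+\beta(t)\psi^{(+)}_{2}$ as $\widetilde{\psi}^{(-)}_{1}=c_{1}(t)\widetilde{\psi}^{(+)}_{1}+c_{2}(t)\widetilde{\psi}^{(+)}_{2}$ with $c_{1}(t)=\alpha(t)$ and $c_{2}(t)=\beta(t)\,e^{-4\ii\lambda^{4}t}$. Taking $\partial_{t}$ of this identity and applying $\partial_{t}\widetilde{\psi}=V\widetilde{\psi}$ to each of the three corrected Jost functions, the $V$-terms cancel and one is left with $(\partial_{t}c_{1})\widetilde{\psi}^{(+)}_{1}+(\partial_{t}c_{2})\widetilde{\psi}^{(+)}_{2}=0$; linear independence of $\widetilde{\psi}^{(+)}_{1,2}$ (which follows from (\ref{e det phi psi =1})) forces $c_{1},c_{2}$ to be $t$-independent, hence $\alpha(t)\equiv\alpha(0)$ and $\beta(t)=\beta(0)\,e^{4\ii\lambda^{4}t}$. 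Setting $z=\lambda^{2}$ and applying (\ref{e def r pm}) gives $r_{\pm}(z;t)=r_{\pm}(z;0)\,e^{4\ii z^{2}t}$. The zeros $\lambda_{k}$ of $\alpha$ and the values $a'(z_{k})$ are thus unchanged in $t$; applying the same reasoning to $\psi^{(-)}_{1}(\lambda_{k})=\gamma_{k}(t)\psi^{(+)}_{2}(\lambda_{k})$, which at $\lambda=\lambda_{k}$ reads $\widetilde{\psi}^{(-)}_{1}=\gamma_{k}(t)\,e^{-4\ii\lambda_{k}^{4}t}\widetilde{\psi}^{(+)}_{2}$, yields $\gamma_{k}(t)=\gamma_{k}(0)\,e^{4\ii\lambda_{k}^{4}t}$, whence $c_{k}(t)=c_{k}(0)\,e^{4\ii\lambda_{k}^{4}t}$.

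Invariance of $\pazocal{G}$ under the flow is then immediate: $\alpha$, its zero set and the derivatives $\alpha'(\lambda_{k})$ are all preserved, so both the no-resonance and no-degenerate-eigenvalue conditions persist. The remaining properties follow by direct inspection. Since $|e^{4\ii z^{2}t}|=1$, the modulus $|r_{\pm}(z;t)|$ is $t$-independent, so the $L^{2,1}$-norm is conserved and the sign condition (\ref{e r constraint}) follows from $\overline{r}_{+}(z;t)r_{-}(z;t)=\overline{r}_{+}(z;0)r_{-}(z;0)$. The extra factor $8\ii zt$ arising when $\partial_{z}e^{4\ii z^{2}t}$ is computed is absorbed by the $L^{2,1}$-bound on $r_{\pm}(\cdot;0)$ from Lemma~\ref{l r pm in H1 and L^21}, so the $H^{1}$-norm remains finite. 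Finally, (\ref{e relation r+ r-}) survives because $r_{+}$ and $r_{-}$ acquire the same exponential factor.

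The main subtlety is justifying the $t$-derivative manipulations rigorously for a merely local $H^{2}\cap H^{1,1}$ solution $u(\cdot,t)$: one needs $C^{1}$-in-$t$ regularity of the Jost functions so that $\chi_{x}=U\chi$ holds classically. This can be extracted from the Lipschitz dependence of the Volterra solutions of (\ref{e volterra phi varphi}) on $u\in H^{1,1}(\Real)$ established in \cite{Pelinovsky2016}, combined with $u\in C([0,T];H^{2}\cap H^{1,1})$ and the consequent $u_{t}\in C([0,T];L^{2})$ from (\ref{e dnls}).
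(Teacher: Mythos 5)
Your argument is correct and is the standard Lax-pair computation; the paper itself omits the proof of this lemma and refers to Section 5 of \cite{Pelinovsky2016}, which proceeds in essentially the same way (time-corrected Jost functions solving both equations of the pair, constancy of $\alpha$, and the exponential factors on $\beta$ and $\gamma_k$). The only point worth tightening is that the vanishing of $\chi=\partial_t\widetilde{\psi}-V\widetilde{\psi}$ should be argued at the level of the normalized functions (i.e.\ $\chi e^{\pm\ii x\lambda^2}\to0$), since for $\im(\lambda^2)\neq0$ the unnormalized Jost solutions themselves decay at one end and ``$\chi\to0$'' alone would not force $\chi\equiv0$.
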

The proof of this Lemma is given in \cite[Section 5]{Pelinovsky2016} and we skip it here. Plugging the time dependence (\ref{e time dependence scattering data}) into the formulas of \rh \ref{rhp m} we obtain the dynamic Riemann--Hilbert problem for the DNLS equation.
\begin{samepage}
\begin{framed}
\begin{rhp}\label{rhp m dynamic}
Find for each $(x,t)\in\Real\times\Real$ a $2\times 2$-matrix valued function $\Compl\ni z\mapsto m(z;x,t)$ which satisfies
\begin{enumerate}[(i)]
  \item $m(z;x,t)$ is meromorphic in $\Compl\setminus\Real$ (with respect to the parameter $z$).
  \item $m(z;x,t)=1+\mathcal{O}\left(\frac{1}{z}\right)$ as $|z|\to\infty$.
  \item The non-tangential boundary values $m_{\pm}(z;x,t)$ exist for $z\in\Real$ and satisfy the jump relation
      \begin{equation*}
        m_+=m_-(1+R),\quad\text{where }
        R(z;x,t):=
        \left[
          \begin{array}{cc}
            \overline{r}_+(z)r_-(z) & e^{\overline{\phi}(z)}\overline{r}_+(z) \\
            e^{\phi(z)}r_-(z) & 0 \\
          \end{array}
        \right]
      \end{equation*}
      with $\phi(z):=2\ii xz+4\ii z^2t$.
  \item $m$ has simple poles at $z_1,...,z_N,\overline{z}_1,...,\overline{z}_N$ with
      \begin{equation}\label{e Res}
        \begin{aligned}
          \res_{z=z_k}m(z;x,t)&=\lim_{z\to z_k}m(z;x,t)
          \left[
            \begin{array}{cc}
              0 & 0 \\
              2\ii\lambda_kc_k e^{\phi_k} & 0
            \end{array}
          \right],\\
          \res_{z=\overline{z}_k}m(z;x,t)&=\lim_{z\to \overline{z}_k}m(z;x,t)
          \left[
            \begin{array}{cc}
              0 & \frac{-\overline{c}_k}{2\ii\lambda_k} e^{\overline{\phi}_k} \\
              0 & 0
            \end{array}
          \right],
        \end{aligned}
      \end{equation}
      where $\phi_k:=\phi(z_k)$.
\end{enumerate}
\end{rhp}
\end{framed}
\end{samepage}
\begin{rem}\label{r uniqueness + det=1}
    Without further theory we can observe that if \rh \ref{rhp m dynamic} is solvable, then the solution is unique. In order to show the uniqueness of solutions, we firstly find the following (trivial) Riemann Hilbert problem for the map $z\mapsto \det(m(z;x,t))$:
    \begin{equation*}
        \left\{
           \begin{array}{ll}
             \det(m(z;x,t))\text{ is an entire function with respect to the parameter }z,\\
             \det(m(z;x,t))\to 1,\text{ as }|z|\to\infty.
           \end{array}
         \right.
    \end{equation*}
    By Liouville's theorem we conclude
    \begin{equation}\label{e det m=1}
        \det(m(z;x,t))\equiv 1,\text{ for all }x,t\in\Real\text{ and }z\in \Compl.
    \end{equation}
    Hence, for a possible solution $m$ of \rh \ref{rhp m dynamic}, $[m(z;x,t)]^{-1}$ exists for all $x\in\Real$ and $z\in \Compl$. If we have a second solution $\widetilde{m}(z;x,t)$, the ratio $\widetilde{m}(z;x,t)[m(z;x,t)]^{-1}$ satisfies
    \begin{equation*}
        \left\{
           \begin{array}{ll}
             \widetilde{m}(z;x,t)[m(z;x,t)]^{-1}\text{ is an entire function with respect to the parameter }z,\\
             \widetilde{m}(z;x,t)[m(z;x,t)]^{-1}\to 1,\text{ as }|z|\to\infty,
           \end{array}
         \right.
    \end{equation*}
    such that $\widetilde{m}(z;x,t)[m(z;x,t)]^{-1}\equiv 1$.
\end{rem}
We end the subsection mentioning the following symmetry:
\begin{equation}\label{e symmetrie of m}
    m(z;x)=\frac{1}{4z}
    \left[
      \begin{array}{cc}
        w(x) & 1 \\
        -|w(x)|^2-4z & \overline{w}(x) \\
      \end{array}
    \right]
    \overline{m(\overline{z};x)}
    \left[
      \begin{array}{cc}
        0 & 1 \\
        4z & 0 \\
      \end{array}
    \right],
\end{equation}
where $w(x):= u(x)\,e^{\ii\int^x_{+\infty}|u(y)|^2dy}$. The symmetry (\ref{e symmetrie of m}) is obtained when one transfers the symmetry (\ref{e symmetries phi varphi 2}) to $\pi_{\pm}$ and $m$, respectively. 

\section{Solitons}\label{s solitons}
This section is devoted to the exact solitary wave solutions of the DNLS equation (\ref{e dnls}) which are known since the 1970s (see, e.g., \cite{mjlhus1976} and \cite{KaupNewell1978}). Also more recent works are concerned with solitons. See for instance \cite{Colin2006}, where orbital stability of solitons is shown. The inverse scattering machinery admits a simple definition of $N$-solitons:
\begin{defn}
    (Global) solutions $u^{(N\text{-sol})}(x,t)$ of (\ref{e dnls}) such that the initial datum $u^{(N\text{-sol})}(\cdot,0)$ produces scattering data
    \begin{equation*}
        \mathcal{S} (u^{(N\text{-sol})})=\set{r_+\equiv r_-\equiv 0;\lambda_1,... ,\lambda_N;c_1,...,c_N},
    \end{equation*}
    are called $N$-\emph{solitons}. For $N=1$ we just say \emph{soliton}.
\end{defn}
In the case of $r_+\equiv r_-\equiv 0,$ the Riemann--Hilbert problem \rh \ref{rhp m dynamic} reads as follows:
\begin{samepage}
\begin{framed}
\begin{rhp}\label{rhp N sol}
Find for each $x\in\Real$ a $2\times 2$-matrix valued function $\Compl\ni z\mapsto m^{(N\text{-sol})} (z;x,t)$ which satisfies
\begin{enumerate}[(i)]
  \item $m^{(N\text{-sol})} (z;x,t)$ is meromorphic in $\Compl$ (with respect to the parameter $z$).
  \item $m^{(N\text{-sol})} (z;x,t)=1+\mathcal{O}\left(\frac{1}{z}\right)$ as $|z|\to\infty$.
  \item $m^{(N\text{-sol})} $ has simple poles at $z_1,...,z_N,\overline{z}_1,...,\overline{z}_N$ with
      \begin{equation*}
        \begin{aligned}
          \res_{z=z_k}m^{(N\text{-sol})} (z;x,t)&=\lim_{z\to z_k}m^{(N\text{-sol})} (z;x,t)
          \left[
            \begin{array}{cc}
              0 & 0 \\
              2\ii\lambda_kc_k e^{2\ii xz_k+4\ii t z_k^2} & 0
            \end{array}
          \right],\\
          \res_{z=\overline{z}_k}m^{(N\text{-sol})} (z;x,t)&=\lim_{z\to \overline{z}_k}m^{(N\text{-sol})} (z;x,t)
          \left[
            \begin{array}{cc}
              0 & \frac{-\overline{c}_k}{2\ii\lambda_k} e^{-2\ii x\overline{z}_k-4\ii t \overline{z}_k^2} \\
              0 & 0
            \end{array}
          \right].
        \end{aligned}
      \end{equation*}
\end{enumerate}
\end{rhp}
\end{framed}
\end{samepage}
Using the ansatz
\begin{equation*}
    m^{(N\text{-sol})} (z;x,t)=1+\sum_{k=1}^N\left\{ \frac{A_k(x,t)}{z-z_k}+ \frac{B_k(x,t)}{z-\overline{z}_k}\right\}
\end{equation*}
we can transfer \rh \ref{rhp N sol} into a purely algebraic system which can be solved explicitly. Then, the reconstruction formulas (\ref{e rec 1}) and (\ref{e rec 2}) yield explicit solutions of the DNLS equation, which are (multi) solitons. For the special case $N=1$ we find
\begin{equation}\label{e soliton}
    u_{\omega,v,x_0,\gamma}(x,t)=\phi_{\omega,v}(x-vt-x_0) e^{-\ii\gamma+\ii\omega t+\ii\frac{v}{2}(x-vt)- \frac{3}{4}\ii\int_{\infty}^{x-vt-x_0} |\phi_{\omega,v}(y)|^2dy},
\end{equation}
where
\begin{equation}\label{e sol ampl}
    \phi_{\omega,v}(x)=\left[\frac{\sqrt{\omega}}{4\omega-v^2} \left\{\cosh(\sqrt{4\omega-v^2}x)- \frac{v}{2\sqrt{\omega}}\right\}\right]^{-1/2}.
\end{equation}
The parameters $(\omega,v)\in \Real^2$ describe the speed and the width of the soliton and  are connected to the pole $z_1$ by
\begin{equation}\label{e omega v}
        \omega=4|z_1|^2,\qquad
         v=-4\re(z_1).
\end{equation}
Note that $v^2<4\omega$ is automatically fulfilled if $z_1\in\Compl_+$. The norming constant $c_1$ influences only the phase and the spatial position of the soliton. To be precise we have
\begin{equation}\label{e x_0 gamma}
    x_0=2\ln\left[\frac{|c_1|}{2\im(z_1)}\right] \left(\sqrt{4\omega-v^2}\right)^{-1},
    \quad
    \gamma=\arg(c_1)+\frac{\pi}{2}+\frac{1}{2}\arg(z_1).
\end{equation}
Expressions for $N$-solitons with $N\geq2$ are large and not presented here. If $\re(z_j)
\neq\re(z_k)$ for $j\neq k$, then for large $|t|$, $N$-solitons break up into $N$  individual solitons of the form (\ref{e soliton}):
\begin{equation}\label{e sol sep}
    u^{(N\text{-sol})}(x,t)\sim \sum_{k=1}^N u_{\omega_k,v_k,x_{0,k}^{\pm},\gamma_k^{\pm}}(x,t), \quad\text{as }t\to\pm\infty.
\end{equation}
If the real parts of two poles $z_j$ and $z_k$ coincide, we obtain a solution having two peaks traveling at the same speed and the separation (\ref{e sol sep}) will not occur. Instead, \emph{breather} phenomena will appear. 
\section{Inverse scattering without poles}\label{s inverse sc}
In this section we are dealing with \rh \ref{rhp m} in the case where $N=0$. Hence, $m$ has no pole in $\Compl\setminus\Real$ and is analytic in $\Compl\setminus\Real$. We recall the associated Riemann--Hilbert problem:\\
\begin{framed}
    \begin{rhp}\label{rhp m^0}
        Find for each $x\in\Real$ a $2\times 2$-matrix valued function $\Compl\ni z\mapsto m(z;x)$ which satisfies
        \begin{enumerate}[(i)]
          \item $m(z;x)$ is meromorphic in $\Compl\setminus\Real$ (with respect to the parameter $z$).
          \item $m(z;x)=1+\mathcal{O}\left(\frac{1}{z}\right)$ as $|z|\to\infty$.
          \item The non-tangential boundary values $m_{\pm}(z;x)$ exist for $z\in\Real$ and satisfy the jump relation
              \begin{equation}\label{e jump m^0}
                  m_+=m_-(1+R),\quad\text{where}\quad
                  R(z;x):=
                  \left(
                    \begin{array}{cc}
                       \overline{r}_+(z)r_-(z) & e^{-2\ii zx}\overline{r}_+(z) \\
                       e^{2\ii zx}r_-(z) & 0 \\
                    \end{array}
                  \right).
              \end{equation}
        \end{enumerate}
    \end{rhp}
\end{framed}
For any function $h\in L^p(\Real)$ with $1\leq p<\infty$, the Cauchy operator denoted by $\pazocal{C}$ is given by
\begin{equation*}
    \pazocal{C}(h)(z):=\frac{1}{2\pi\ii}\int_{\Real} \frac{h(s)}{s-z}ds,\quad z\in\Compl\setminus\Real.
\end{equation*}
When $z$ approaches to a point on the real line transversely from the upper and lower half planes, the Cauchy operator becomes the following projection operators:
\begin{equation*}
    \Ppm(h)(z):=\lim_{\eps\downarrow 0}\frac{1}{2\pi\ii}\int_{\Real} \frac{h(s)}{s-(z\pm\eps)}ds,\quad z\in\Real.
\end{equation*}
The following proposition summarizes all properties which are needed to establish the solvability of \rh \ref{rhp m^0} and furthermore to prove estimates on the solution.
\begin{prop}\label{p cauchy operator}
    \begin{enumerate}[(i)]
      \item For every $h\in L^p(\Real)$, $1\leq p<\infty$, the Cauchy operator $\pazocal{C}(h)$ is analytic off the real line.
      \item For $h\in L^1(\Real)$, $\pazocal{C}(h)(z)$ decays to zero as $|z|\to\infty$ and admits the asymptotic
          \begin{equation}\label{e lim z Ch(z)}
            \lim_{|z|\to\infty}z\pazocal{C}(h)(z)= -\frac{1}{2\pi\ii}\int_\Real h(s) ds,
          \end{equation}
          where the limit is taken either in $\Compl^+$ or $\Compl^-$.
      \item The projection operators $\Ppm$ are linear bounded operators $L^p(\Real)\to L^p(\Real)$ for each $p\in(1,\infty)$. For $p=2$ we have $\|\Ppm\|_{L^2\to L^2}=1$.
      \item For every $x_0\in\Real_+$ and every $r\in H^1(\Real)$, we have
          \begin{equation}\label{e sup P^pm r 1}
              \sup_{x\in(x_0,\infty)} \|\langle x\rangle\Ppm (r(z)e^{\mp2\ii zx})\|_{L^{2}_z(\Real)}\leq \|r\|_{H^1},
          \end{equation}
          where $\langle x\rangle:=\sqrt{1+|x|^2}$. In addition,
          \begin{equation}\label{e sup P^pm r 2}
              \sup_{x\in\Real} \|\Ppm (r(z)e^{\mp2\ii zx})\|_{L^{\infty}_z(\Real)}\leq \frac{1}{\sqrt{2}}\|r\|_{H^1}.
          \end{equation}
          Furthermore, if $r\in L^{2,1}(\Real)$, then
          \begin{equation}\label{e sup P^pm r 3}
              \sup_{x\in\Real}\|\Ppm (zr(z)e^{\mp2\ii zx})\|_{L^{\infty}_z(\Real)}\leq \frac{1}{\sqrt{2}}\|zr\|_{L^{2,1}}.
          \end{equation}
      \item (Sokhotski-Plemelj theorem) The following two identities hold:
      \begin{equation}\label{e Sokhotski-Plemelj}
        \begin{aligned}
            &\Pp-\Pm=\text{Id}_{L^p(\Real)},\\
            &\Pp+\Pm=-\ii\pazocal{H},
        \end{aligned}
      \end{equation}
      where $\pazocal{H}:L^p(\Real)\to L^p(\Real)$ is the Hilbert transform given by
      \begin{equation*}
        \pazocal{H}(h)(z):=\lim_{\eps\downarrow 0}\frac{1}{\pi}\left(\int_{-\infty}^{z-\eps}+ \int^{\infty}_{z+\eps}\right) \frac{h(s)}{s-z}ds,\quad z\in\Real.
      \end{equation*}
      \item Let $f_+$ and $f_-$ functions defined in the upper (lower) $\Compl$-plane. If $f_{\pm}$ is analytic in $\Compl^{\pm}$ and $f_{\pm}(z)\to 0$ as $|z|\to\infty$ for $\im(z)\gtrless0$, then
          \begin{equation}\label{e Ppm of analytic functions}
            \Ppm(f_{\mp})(z)=0,\qquad\Ppm(f_{\pm})(z)=\pm f_{\pm}(z),\quad z\in\Real.
          \end{equation}
    \end{enumerate}
\end{prop}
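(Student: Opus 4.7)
My plan is to handle items (i), (ii), (v), and (vi) by routine arguments about the Cauchy kernel, and to base items (iii) and (iv) on a Fourier-multiplier representation of the boundary projections $\Ppm$. For (i), I would invoke Morera's theorem: for any closed triangle $\Delta\subset\Compl\setminus\Real$, Fubini together with H\"older interchanges the integral over $s$ with the contour integral over $\Delta$, which vanishes since $z\mapsto(s-z)^{-1}$ is holomorphic on $\Delta$. Item (ii) is dominated convergence: $|h(s)/(s-z)|\le|h(s)|/\mathrm{dist}(z,\Real)$ gives the decay to zero, while $|zh(s)/(s-z)|\to|h(s)|$ pointwise as $|z|\to\infty$ with an $L^1$ majorant produces (\ref{e lim z Ch(z)}). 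Item (v) is the classical Sokhotski--Plemelj identity, obtained from the distributional limit $\lim_{\eps\downarrow 0}\frac{1}{s-(k\pm i\eps)}=\mathrm{PV}\,\frac{1}{s-k}\mp i\pi\,\delta(s-k)$ tested against $h\in L^p$. For (vi), I would use contour deformation: since $f_+$ is holomorphic in $\Compl^+$ with decay at infinity, a large-semicircle argument in $\Compl^+$ gives $\pazocal{C}(f_+)(z)=f_+(z)$ for $z\in\Compl^+$ (residue theorem) and $\pazocal{C}(f_+)(z)=0$ for $z\in\Compl^-$ (no enclosed singularity); passing to the real boundary yields (\ref{e Ppm of analytic functions}).

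The crux of the proposition is a Fourier representation of $\Ppm$. A residue calculation for $\int_\Real e^{i\xi s}/(s-k\mp i\eps)\,ds$ (closing the contour in $\Compl^{\pm}$), combined with the inversion $h(s)=\frac{1}{2\pi}\int\hat h(\xi)e^{i\xi s}\,d\xi$, should yield
\begin{equation*}
    \Pp(h)(k)=\frac{1}{2\pi}\int_0^\infty \hat h(\xi)\,e^{ik\xi}\,d\xi,\qquad
    \Pm(h)(k)=-\frac{1}{2\pi}\int_{-\infty}^0 \hat h(\xi)\,e^{ik\xi}\,d\xi,
\end{equation*}
identifying $\Pp$ (resp.\ $\Pm$) with the Fourier projection onto positive (resp.\ negative) frequencies. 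Plancherel then immediately gives $\|\Ppm\|_{L^2\to L^2}=1$, the $p=2$ case of (iii); the general $1<p<\infty$ case will reduce via (v) to the $L^p$-boundedness of the Hilbert transform (M.~Riesz). For (iv), the key point is that the oscillating factor $e^{\mp 2izx}$ translates the Fourier variable: $\widehat{re^{\mp 2i\cdot x}}(\eta)=\hat r(\eta\pm 2x)$. After substituting $\eta\mapsto\eta\pm 2x$, the $L^2_k$-norm squared of $\Ppm(re^{\mp 2i\cdot x})$ becomes $\frac{1}{2\pi}\int_{\pm 2x}^\infty|\hat r(\eta)|^2\,d\eta$; for $x>x_0>0$ I would bound this by $\langle 2x\rangle^{-2}\|r\|_{H^1}^2$ via the weight $\langle\eta\rangle^{\pm 1}$ and the identity $\|r\|_{H^1}^2=\frac{1}{2\pi}\|\langle\eta\rangle\hat r\|_{L^2}^2$; since $\langle x\rangle\le\langle 2x\rangle$, estimate (\ref{e sup P^pm r 1}) follows. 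The pointwise bounds (\ref{e sup P^pm r 2}) and (\ref{e sup P^pm r 3}) follow from the triangle inequality $|\Ppm(\cdot)(k)|\le\frac{1}{2\pi}\int|\hat r(\eta\pm 2x)|\,d\eta$ followed by Cauchy--Schwarz in the form $|\hat r|=\langle\eta\rangle^{-1}\cdot\langle\eta\rangle|\hat r|$, together with $\int\langle\eta\rangle^{-2}\,d\eta=\pi$, which produces the prefactor $\frac{1}{\sqrt 2}$.

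The hardest step will be (iv), in particular the careful bookkeeping of signs, the Fourier shifts by $\pm 2x$, and the correspondence between physical-side weighted spaces ($H^1$ or $L^{2,1}$) and the associated Fourier-side Sobolev weights. For (\ref{e sup P^pm r 3}) the same strategy has to be applied after treating $zr(z)$ as a single function and using $\widehat{zr}=i(\hat r)'$, so that the weight $\|zr\|_{L^{2,1}}$ corresponds under Plancherel to $\|\widehat{zr}\|_{H^1}$, the quantity that controls the Cauchy--Schwarz estimate on $\int|\widehat{zr}|\,d\eta$. Once these weight translations are executed with the correct constants, every claim of Proposition \ref{p cauchy operator} falls out of the Fourier-multiplier picture.
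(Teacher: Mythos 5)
The paper itself offers no proof of this proposition: it is stated as a collection of known facts, with the two soft items ((i), (ii)) and the classical ones ((iii), (v), (vi)) left to the literature and the quantitative bounds in (iv) imported from \cite{Pelinovsky2016}. So there is no argument in the paper to compare against; your Fourier--multiplier route (identifying $\Pp$ and $\Pm$ with the projections onto positive and negative frequencies, then using Plancherel, the translation $\widehat{re^{\mp 2\ii\cdot x}}(\xi)=\hat r(\xi\pm 2x)$, and Cauchy--Schwarz against the weight $\langle\eta\rangle^{-1}$ with $\int\langle\eta\rangle^{-2}d\eta=\pi$) is exactly the standard one used in the cited source. Items (i)--(iii), (v), (vi) and the first two bounds of (iv) are correctly handled, and your constants check out: $\frac{1}{2\pi}\sqrt{\pi}\cdot\sqrt{2\pi}=\frac{1}{\sqrt2}$ and $\langle x\rangle\le\langle 2x\rangle$ for $x>0$ give (\ref{e sup P^pm r 1}) and (\ref{e sup P^pm r 2}) precisely. (Two cosmetic caveats: in (ii) the domination $|z/(s-z)|\le C$ requires the limit to be taken non-tangentially, and in (vi) the semicircle argument needs the decay of $f_\pm$ to be uniform, i.e.\ a Hardy-class hypothesis; both are implicit in how the proposition is used.)

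The genuine gap is in your treatment of (\ref{e sup P^pm r 3}). You claim that ``$\|zr\|_{L^{2,1}}$ corresponds under Plancherel to $\|\widehat{zr}\|_{H^1}$, the quantity that controls the Cauchy--Schwarz estimate on $\int|\widehat{zr}|\,d\eta$.'' The identification $\|zr\|_{L^{2,1}}=\frac{1}{\sqrt{2\pi}}\|\widehat{zr}\|_{H^1}$ is correct, but that is \emph{not} the quantity entering Cauchy--Schwarz: the bound $\int|\widehat{zr}(\eta)|\,d\eta\le\sqrt{\pi}\,\|\langle\eta\rangle\widehat{zr}\|_{L^2}$ requires control of $\|\langle\eta\rangle\widehat{zr}\|_{L^2}=\sqrt{2\pi}\,\|zr\|_{H^1}$ (a smoothness norm of $zr$ on the physical side), whereas $\|\widehat{zr}\|_{H^1}$ involves $\partial_\eta\widehat{zr}$ and encodes \emph{decay} of $zr$, not its regularity. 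You have swapped the two dual roles of multiplication and differentiation under the Fourier transform. Membership of $\widehat{zr}$ in $H^1(\Real)$ does not give $\widehat{zr}\in L^1(\Real)$, so your chain of inequalities breaks at exactly this point; what your method actually proves is $\sup_x\|\Ppm(zr e^{\mp2\ii zx})\|_{L^\infty}\le\frac{1}{\sqrt2}\|zr\|_{H^1}$. Indeed, the bound by $\|zr\|_{L^{2,1}}$ alone cannot hold: by Sokhotski--Plemelj, $\Ppm(zre^{\mp2\ii zx})=\pm\frac12 zre^{\mp2\ii zx}-\frac{\ii}{2}\pazocal{H}(zre^{\mp2\ii z x})$, so a compactly supported $r$ with a mild local singularity (e.g.\ $r(z)=|z-1|^{-1/4}\chi_{[0,2]}(z)$, which has finite $\|zr\|_{L^{2,1}}$) produces an unbounded left-hand side. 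You should therefore either prove the estimate in the form with $\|zr\|_{H^1}$ on the right (which is what the downstream applications in Proposition \ref{p bound <x>^2I} actually consume, since there $r=r_\pm\in H^1\cap L^{2,1}$), or flag that the statement as printed needs its right-hand side adjusted; in either case the step as you wrote it does not go through.
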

The Cauchy operator is useful to convert \rh \ref{rhp m^0} into an integral equation. Indeed, the jump condition (\ref{e jump m^0}) can be written as
\begin{equation*}
    (m_+(z;x)-1)-(m_-(z;x)-1)=m_-(z;x)R(z;x).
\end{equation*}
Applying $\Pp$ and $\Pm$ to this equation yields by (\ref{e Ppm of analytic functions}) the following integral equation
\begin{equation}\label{e integral equation for m+-}
    m_{\pm}(z;x)=1+\Ppm(m_-(\cdot;x)R(\cdot;x))(z),\quad z\in\Real,
\end{equation}
which represents the solution of \rh \ref{rhp m^0} on the real line. The following Lemma ensures the solvability of \rh \ref{rhp m} (see Corollary 6 and Lemma 9 in \cite{Pelinovsky2016}):
\begin{lem}\label{l solvability of RHP N=0}
    Let $r_{\pm}\in H^1(\Real)\cap L^{2,1}(\Real)$ such that the relation (\ref{e relation r+ r-}) and the constraint (\ref{e r constraint}) hold. Then there exists an unique solution $m_{\pm}$ of the system of integral equations (\ref{e integral equation for m+-}). Moreover there exists a positive constant $C$ that depends on $\|r_{\pm}\|_{L^{\infty}}$ only such that $m_{\pm}$ enjoys the estimate
    \begin{equation*}
        \|m_{\pm}(\cdot;x)-1\|_{L^2}\leq C(\|r_{+}\|_{L^2} +\|r_{-}\|_{L^2})
    \end{equation*}
    for every $x\in\Real$.
\end{lem}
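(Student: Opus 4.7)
The plan is to convert the integral equation (\ref{e integral equation for m+-}) into an operator equation on $L^2(\Real;\Compl^{2\times 2})$ and to invert the resulting operator via Fredholm theory. Setting $\mu_-:=m_- - 1$, (\ref{e integral equation for m+-}) becomes
\begin{equation*}
(\mathrm{Id}-T_R)\mu_-=\Pm R,\qquad T_R h:=\Pm(hR).
\end{equation*}
By Proposition \ref{p cauchy operator}(iii), $\|\Pm\|_{L^2\to L^2}=1$, and the bounds $\|R\|_{L^\infty}\leq C_0$ and $\|R\|_{L^2}\leq C_0(\|r_+\|_{L^2}+\|r_-\|_{L^2})$ (with $C_0$ depending only on $\|r_\pm\|_{L^\infty}$) imply that $T_R$ is bounded on $L^2$, uniformly in $x$, and that $\Pm R\in L^2$. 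A direct Neumann inversion is unavailable because we cannot assume $\|R\|_{L^\infty}<1$; instead, I would establish invertibility of $\mathrm{Id}-T_R$ by combining (a) Fredholmness of index zero with (b) injectivity.

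For (a), I would exploit the algebraic constraint (\ref{e r constraint}): since the $(1,1)$-entry $1+\bar r_+r_-$ of $1+R$ is bounded below by $\min(1,c_0^2)>0$, the jump $1+R$ admits a Beals--Coifman factorization $1+R=(1-w_-)^{-1}(1+w_+)$ with $w_\pm$ triangular and $L^2\cap L^\infty$-bounded in terms of $\|r_\pm\|_{L^\infty\cap L^2}$. The integral equation for $\mu_-$ is then equivalent to the Beals--Coifman equation for $C_wh:=\Pp(hw_-)+\Pm(hw_+)$, and the standard analysis (going back to Zhou) shows that $\mathrm{Id}-C_w$, hence $\mathrm{Id}-T_R$, is Fredholm of index zero on $L^2$.

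For (b), the vanishing lemma, I would argue as follows. Suppose $h\in L^2$ satisfies $h=\Pm(hR)$ and set $f(z):=\pazocal{C}(hR)(z)$, analytic in $\Compl\setminus\Real$ with $f(z)=O(|z|^{-1})$ at infinity and boundary values $f_-=h$, $f_+=h(1+R)$. Introducing the Schwarz reflection $\tilde f(z):=\overline{f(\bar z)}^{\mathsf T}$, analytic in the opposite half plane, and closing the contour at infinity for the scalar $f\tilde f$ in $\Compl^+$ (whose integrand decays like $|z|^{-2}$ on the semicircle) yields
\begin{equation*}
\int_\Real h(z)\bigl(1+R(z)\bigr)h(z)^*\,dz=0.
\end{equation*}
The delicate part is to extract from this scalar identity the positive quadratic form $\|h\|_{L^2}^2$. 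Because the DNLS jump lacks global Hermitian positivity, I would split the line into $\set{z>0}$ and $\set{z<0}$, invoke the precise relation $r_-=4zr_+$ from (\ref{e relation r+ r-}), and use the two branches of (\ref{e r constraint}) separately on each half-line.

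Combining (a) and (b) gives a bounded inverse $(\mathrm{Id}-T_R)^{-1}$ whose operator norm is controlled by $\|r_\pm\|_{L^\infty}$ and $c_0$ alone. The stated estimate
\begin{equation*}
\|m_-(\cdot;x)-1\|_{L^2}\leq\|(\mathrm{Id}-T_R)^{-1}\|\,\|\Pm R\|_{L^2}\leq C(\|r_+\|_{L^2}+\|r_-\|_{L^2})
\end{equation*}
then follows uniformly in $x\in\Real$, since all ingredients are $x$-independent (the exponential factors in $R$ have modulus one). The analogous result for $m_+$ follows identically after exchanging $\Pp$ and $\Pm$, or alternatively from the symmetry (\ref{e symmetrie of m}). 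The main obstacle in the plan is unambiguously the vanishing lemma in step (b): the classical $V+V^*\geq 0$ positivity framework fails because the character of the DNLS jump changes sign between $z>0$ and $z<0$, and one must instead carry out the argument on the two half-lines separately, using the full algebraic structure $r_-=4zr_+$ together with (\ref{e r constraint}).
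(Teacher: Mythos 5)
First, a point of reference: the paper does not prove this lemma itself --- it imports it from \cite{Pelinovsky2016} (Corollary 6 and Lemma 9 there) --- so the comparison is with that proof. Your architecture (rewrite (\ref{e integral equation for m+-}) as $(\mathrm{Id}-T_R)\mu_-=\Pm R$, prove Fredholmness of index zero via the triangular factorization $(1+R_+)(1+R_-)$ of (\ref{e def R+ and R-}), then prove injectivity by a vanishing lemma) is the standard Beals--Coifman--Zhou skeleton and is reasonable. You also correctly diagnose the central obstruction: the contour-closing identity only yields $\int_\Real h\,(2+R+R^*)\,h^*\,dz=0$, and $2+R+R^*$ is \emph{not} positive semidefinite for the DNLS jump; indeed, using $r_-=4zr_+$ one computes $\det(2+R+R^*)=4-|r_+(z)|^2(4z-1)^2$, which can be negative at points $z>0$ where (\ref{e r constraint}) imposes no upper bound on $|r_+|$.

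The problem is that the step you label ``delicate'' is where the entire proof lives, and the fix you propose does not close it. Splitting the single scalar identity $\int_\Real h\,(2+R+R^*)\,h^*\,dz=0$ into $\int_{z>0}+\int_{z<0}$ cannot work: neither half-line integral vanishes separately, and the integrand is sign-indefinite on each half-line, so no invocation of $r_-=4zr_+$ turns one indefinite identity into $\|h\|_{L^2}=0$. The argument in \cite{Pelinovsky2016} is structurally different: the system (\ref{e component RHP solution formula}) decouples into two scalar $2\times2$ subsystems (for $([m_-]_{11},[m_+]_{12})$ and for $([m_+]_{22},[m_-]_{21})$), and for each one introduces two auxiliary sectionally analytic functions together with the \emph{weighted} product carrying the factor $4z$; that product decays only like $1/z$, so the contour integral produces a boundary term proportional to $\ii|c|^2$ whose vanishing (extracted by taking imaginary parts) is itself part of the argument, after which the surviving real identities, combined with the two branches of (\ref{e r constraint}), force the homogeneous solution to vanish \emph{with an explicit coercivity constant}. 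This last point exposes two further gaps in the soft route: (i) the Fredholm alternative gives invertibility of $\mathrm{Id}-T_R$ but no bound on $\|(\mathrm{Id}-T_R)^{-1}\|$, so your final constant $C$ would a priori depend on $r_\pm$ themselves rather than only on $\|r_\pm\|_{L^\infty}$ (and $c_0$), which is exactly the uniformity the lemma asserts and that Corollaries \ref{c u in H^11 and H^2 (positive half line)}--\ref{c u in H^11 and H^2 (negative half line)} require; and (ii) uniformity in $x$ does not follow merely because $\|R(\cdot;x)\|_{L^2}$ and $\|R(\cdot;x)\|_{L^\infty}$ are $x$-independent --- the operator $T_R$ itself varies with $x$ through the oscillatory factors, and only the explicit, $x$-independent coercivity estimate controls the resolvent uniformly.
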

This Lemma yields indeed a solution of \rh \ref{rhp m^0}, since the analytic continuation of $m_{\pm}$ is found by Proposition \ref{p cauchy operator} (ii):
\begin{equation}\label{e RHP solution formula}
    m(z;x)=1+\frac{1}{2\pi\ii}\int_{\Real} \frac{m_-(y;x)R(y;x)}{y-z}dy,\quad z\in\Compl\setminus \Real.
\end{equation}
Alternatively we can factorize $1+R=(1+R_+)(1+R_-)$ with
\begin{equation}\label{e def R+ and R-}
    R_+(z;x)=
    \left(
      \begin{array}{cc}
            0 & e^{\overline{\phi}(z)}\overline{r}_+(z) \\
            0 & 0 \\
      \end{array}
    \right),\qquad
    R_-(z;x)=
    \left(
      \begin{array}{cc}
            0 & 0 \\
            e^{\phi(z)}r_-(z) & 0 \\
      \end{array}
    \right).
\end{equation}
The jump relation (\ref{e jump}) then becomes $m_+ -m_-=m_-R_+ +m_+R_-$ and applying again $\Ppm$ to this equation yields us
\begin{equation}\label{e alternative RHP solution formula}
    m(z;x)=1+\frac{1}{2\pi\ii}\int_{\Real} \frac{m_-(y;x)R_+(y;x)+ m_+(y;x)R_-(y;x)}{y-z}dy.
\end{equation}
In component form, for the non-tangential limits $z\to\Real$, we find
\begin{equation}\label{e component RHP solution formula}
    m_{\pm}(z;x)=1+
    \left[
      \begin{array}{cc}
        \Ppm\left([m_+(z;x)]_{12} r_-(z) e^{2\ii zx}\right)(z) & \Ppm\left([m_-(z;x)]_{11} \overline{r}_+(z) e^{-2\ii zx}\right)(z) \\
        \Ppm\left([m_+(z;x)]_{22} r_-(z) e^{2\ii zx}\right)(z) & \Ppm\left([m_-(z;x)]_{21} \overline{r}_+(z) e^{-2\ii zx}\right)(z) \\
      \end{array}
    \right].
\end{equation}
In the further analysis of \rh \ref{rhp m} we will meet expressions of the form
\begin{equation}\label{e def I_1,2}
    \begin{aligned}
        &I_1(r)(x):=\frac{1}{2\pi\ii} \int_{\Real}[m_-(y;x)-1]_{11} r(y) e^{-2\ii yx}dy,\\
        &I_2(r)(x):=\frac{1}{2\pi\ii} \int_{\Real}[m_+(y;x)-1]_{22}r(y) e^{2\ii yx}dy,
    \end{aligned}
\end{equation}
where $m_{\pm}$ are the unique solutions of the system of integral equations (\ref{e integral equation for m+-}) and $r$ is some given function.
\begin{prop}\label{p bound <x>^2I}
    Suppose that the assumptions of Lemma \ref{l solvability of RHP N=0} are fulfilled and take $r\in H^1(\Real)\cap L^{2,1}(\Real)$. Then the functionals defined in (\ref{e def I_1,2}) satisfy the bound
    \begin{equation}\label{e bound <x>^2I}
        \begin{aligned}
            &\|I_1(r)\|_{H^1(\Real_+)\cap L^{2,1}(\Real_+)}\leq C\|r_-\|_{H^1\cap L^{2,1}}(\|r_+\|_{H^1\cap L^{2,1}}+\|r_-\|_{H^1\cap L^{2,1}})\|r\|_{H^1\cap L^{2,1}},\\
            &\|I_2(r)\|_{H^1(\Real_+)\cap L^{2,1}(\Real_+)}\leq C\|r_+\|_{H^1\cap L^{2,1}}(\|r_+\|_{H^1\cap L^{2,1}}+\|r_-\|_{H^1\cap L^{2,1}})\|r\|_{H^1\cap L^{2,1}}
        \end{aligned}
    \end{equation}
    where $C$ is a positive constant.
\end{prop}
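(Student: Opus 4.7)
I focus on $I_1$; the estimate for $I_2$ is symmetric with the roles of $r_+$ and $r_-$ interchanged. The idea is to rewrite $I_1$ via the integral equations (\ref{e component RHP solution formula}) and a duality identity for $\Ppm$, so that the weighted Cauchy estimates of Proposition \ref{p cauchy operator}(iv) produce pointwise decay in $x > 0$.

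\emph{Duality rewriting.} Since $[m_-(y;x)-1]_{11}=\Pm\bigl([m_+(\cdot;x)-1]_{12}\,r_-\,e^{2\ii\cdot x}\bigr)(y)$ (the $-1$ inside is harmless as $[\mathrm{Id}]_{12}=0$), substituting into (\ref{e def I_1,2}) and applying the duality
\[
\int_\Real \Pm(f)\,g\,dy=-\int_\Real f\,\Pp(g)\,dy,
\]
which follows from $\Pp-\Pm=\mathrm{Id}$ together with $\int_\Real \Ppm(f)\,\Ppm(g)\,dy=0$ (the latter because $\Ppm(f)\Ppm(g)$ extends analytically to $\Compl^\pm$ with $O(|z|^{-2})$ decay), yields
\[
I_1(r)(x)=-\frac{1}{2\pi\ii}\int_\Real [m_+(s;x)-1]_{12}\,r_-(s)\,e^{2\ii sx}\,\Pp\bigl(r\,e^{-2\ii\cdot x}\bigr)(s)\,ds.
\]
A further substitution $[m_+(s;x)-1]_{12}=\Pp\bigl([m_-(\cdot;x)]_{11}\,\overline{r}_+\,e^{-2\ii\cdot x}\bigr)(s)$ and the splitting $[m_-]_{11}=1+[m_--1]_{11}$ decompose $I_1(r)=I_1^{\mathrm{low}}(r)+I_1^{\mathrm{corr}}(r)$.

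\emph{The $L^2(\Real_+)\cap L^{2,1}(\Real_+)$ bound.} In $I_1^{\mathrm{low}}(r)(x)$ both $\Pp$-factors are of the form $\Pp\bigl(h\,e^{-2\ii\cdot x}\bigr)$ with $h$ independent of $x$, so (\ref{e sup P^pm r 1}) gives $\langle x\rangle^{-1}$-decay on each. Pulling $r_-$ out in $L^\infty$ (via the Sobolev embedding $H^1\hookrightarrow L^\infty$) and applying Cauchy--Schwarz on the two $L^2_s$-factors produces the pointwise bound
\[
|I_1^{\mathrm{low}}(r)(x)|\leq \frac{C}{\langle x\rangle^{2}}\,\|r_-\|_{H^1}\,\|r_+\|_{H^1}\,\|r\|_{H^1},\qquad x>0.
\]
As both $\langle x\rangle^{-2}$ and $x\langle x\rangle^{-2}$ lie in $L^2(\Real_+)$, the required bound for $I_1^{\mathrm{low}}$ in $L^2\cap L^{2,1}(\Real_+)$ follows. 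For $I_1^{\mathrm{corr}}$ I iterate the identity $[m_--1]_{11}=\Pm\bigl([m_+-1]_{12}\,r_-\,e^{2\ii\cdot x}\bigr)$ once more and apply duality again to obtain an integrand containing a third $\Ppm$-factor of the right exponential form; (\ref{e sup P^pm r 1}) now applies three times, furnishing $\langle x\rangle^{-3}$-decay as well as the extra $(\|r_+\|_{H^1\cap L^{2,1}}+\|r_-\|_{H^1\cap L^{2,1}})$-factor in the claimed bound.

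\emph{The $H^1(\Real_+)$ bound and main obstacle.} Differentiating under the integral with $\partial_x e^{-2\ii yx}=-2\ii y\,e^{-2\ii yx}$ gives
\[
\partial_x I_1(r)(x)=\frac{1}{2\pi\ii}\int_\Real \partial_x[m_-(y;x)]_{11}\,r(y)\,e^{-2\ii yx}\,dy \;-\; 2\ii\,I_1(yr)(x).
\]
The second term fits the previous scheme, since $yr\in L^2$ (from $r\in L^{2,1}$) and the $\langle x\rangle^{-1}$-decay is supplied entirely by the $\Pp$-factor on $\overline{r}_+$, so that only the $L^2_s$-boundedness of $\Pp$ is needed on $\Pp(yr\,e^{-2\ii\cdot x})$. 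For the first term I differentiate the Volterra equation (\ref{e integral equation for m+-}) in $x$ to obtain an integral equation for $\partial_x m_-$ satisfying analogous bounds and re-apply the duality/weighted-estimate procedure. The principal technical hurdle throughout is that Lemma \ref{l solvability of RHP N=0} provides only $x$-uniform control on $m_\pm-1$, whereas the $L^2(\Real_+)$ and especially the $L^{2,1}(\Real_+)$ norms demand genuine decay in $x$; this is overcome by iterating (\ref{e component RHP solution formula}) one or two extra times so that every factor containing $m_\pm-1$ or $\partial_x m_\pm$ is rewritten as a $\Ppm$-projection of a phase-oscillating function, at which point (\ref{e sup P^pm r 1}) applies directly.
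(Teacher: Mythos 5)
Your first step --- substituting $[m_-(y;x)-1]_{11}=\Pm\bigl([m_+(\cdot;x)]_{12}r_-e^{2\ii\cdot x}\bigr)(y)$ into $I_1(r)$ and applying the adjoint identity $\int_\Real\Pm(f)g\,dy=-\int_\Real f\,\Pp(g)\,dy$ --- is exactly the paper's opening move (the paper calls it ``integrating by parts''), and your subsequent use of H\"older, the embedding $H^1\hookrightarrow L^\infty$, and the weighted estimate (\ref{e sup P^pm r 1}) on $\Pp\bigl(r\,e^{-2\ii\cdot x}\bigr)$ is likewise identical. The two arguments diverge in how the \emph{second} factor of $\langle x\rangle^{-1}$ decay is produced. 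The paper obtains it in one stroke from the a priori weighted bound $\sup_{x>x_0}\|\langle x\rangle[m_+(\cdot;x)]_{12}\|_{L^2_z}\le C\|r_+\|_{H^1}$, which it imports from \cite[Lemma 10]{Pelinovsky2016}; you instead substitute $[m_+]_{12}=\Pp\bigl([m_-]_{11}\overline{r}_+e^{-2\ii\cdot x}\bigr)$, split off the leading term $\Pp\bigl(\overline{r}_+e^{-2\ii\cdot x}\bigr)$, and propose to tame the correction by iterating (\ref{e component RHP solution formula}) ``one or two extra times.''

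That last step is where there is a genuine gap. The estimate (\ref{e sup P^pm r 1}) applies only to projections of functions of the form $h(z)e^{\mp2\ii zx}$ with $h$ \emph{independent of} $x$. After your substitution the correction term is $\Pp\bigl(([m_-(\cdot;x)]_{11}-1)\overline{r}_+e^{-2\ii\cdot x}\bigr)$, whose projected function carries the $x$-dependent factor $m_-(\cdot;x)-1$; the $L^2\to L^2$ boundedness of $\Pp$ then yields only an $x$-uniform bound, with no decay. Iterating does not repair this: each further substitution reproduces a remainder containing $m_\pm(\cdot;x)-1$ inside a projection, and no finite number of iterations turns it into a projection of a phase-oscillating $x$-independent function, so your claimed $\langle x\rangle^{-2}$ and $\langle x\rangle^{-3}$ rates for the correction terms are not justified as written. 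The correct mechanism is not iteration but a resolvent argument: writing (\ref{e component RHP solution formula}) as $(\mathrm{Id}-K_x)(m_\pm-1)=F_x$ with $\|F_x\|_{L^2}\le\langle x\rangle^{-1}\|r_\pm\|_{H^1}$ by (\ref{e sup P^pm r 1}), the uniform boundedness of $(\mathrm{Id}-K_x)^{-1}$ (the content of Lemma \ref{l solvability of RHP N=0}) transfers the full $\langle x\rangle^{-1}$ decay of the forcing to $m_\pm-1$ at once; this is precisely \cite[Lemma 10]{Pelinovsky2016}, and the same input is needed for $\partial_x m_\pm$ in your $H^1$ step. With that lemma in hand your decomposition collapses back to the paper's two-line argument, so you should either invoke it or supply the resolvent bound explicitly rather than the finite iteration.
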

\begin{proof}
    For the convenience of the reader we prove this proposition although it is already proven in \cite{Pelinovsky2016}. We find by (\ref{e component RHP solution formula}) and integrating by parts
    \begin{eqnarray*}
      I_1(r)(x) &=& \frac{1}{2\pi\ii} \int_{\Real}\Pm\left([m_+(z;x)]_{12} r_-(z) e^{2\ii zx}\right)\!(y)\: r(y) e^{-2\ii yx}dy\\
       &=&   \frac{-1}{2\pi\ii} \int_{\Real}[m_+(y;x)]_{12} r_-(y) e^{2\ii zx}\Pp\left(r(z) e^{-2\ii zx}\right)(y) dy.
    \end{eqnarray*}
    Using the H\"{o}lder inequality and the estimate (\ref{e sup P^pm r 1}), we arrive at
    \begin{equation*}
        \sup_{x\in(x_0,\infty)}|\langle x\rangle^2I_1(r)(x)|\leq \|r_-\|_{L^{\infty}}\|r\|_{H^1}\sup_{x\in(x_0,\infty)} \|\langle x\rangle[m_+(y;x)]_{12}\|_{L^2_z}.
    \end{equation*}
    We know $\sup_{x\in(x_0,\infty)} \|\langle x\rangle[m_+(y;x)]_{12}\|_{L^2_z}\leq C\|r_+\|_{H^1}$ by \cite[Lemma 10]{Pelinovsky2016} which completes the proof of $I_1(r)\in L^{2,1}(\Real_+)$. The assertion $\partial_x I_1(r)\in L^{2}(\Real_+)$ is established by using again the inhomogeneous equation (\ref{e component RHP solution formula}), its $x$ derivative, integration by parts, H\"{o}lder inequality, and in the end estimates (\ref{e sup P^pm r 1}) - (\ref{e sup P^pm r 3}),
    \begin{equation*}
        \sup_{x\in(x_0,\infty)} \|\langle x\rangle[m_+(y;x)]_{12}\|_{L^2_z}\leq C\|r_+\|_{H^1}
    \end{equation*}
    and
    \begin{equation*}
        \sup_{x\in\Real} \|[\partial_x m_+(y;x)]_{12}\|_{L^2_z}\leq C(\|r_+\|_{H^1\cap L^{2,1}}+\|r_-\|_{H^1\cap L^{2,1}}).
    \end{equation*}
    The latter statement can also be found in \cite[Lemma 10]{Pelinovsky2016}.
\end{proof}
The proposition above yields directly the following fundamental result (see Lemma 11 in \cite{Pelinovsky2016}):
\begin{cor}\label{c u in H^11 and H^2 (positive half line)}
   Fix $M>0$. Under the assumptions of Lemma \ref{l solvability of RHP N=0} and if $\|r_+\|_{H^1\cap L^{2,1}}+\|r_-\|_{H^1\cap L^{2,1}}\leq M$, the potential $u$ reconstructed from the solution $m$ of \rh \ref{rhp m^0} by using (\ref{e rec 1}) and (\ref{e rec 2}) lies in $H^2(\Real_+)\cap H^{1,1}(\Real_+)$. Moreover, it satisfies the bound
   \begin{equation}\label{e u in H^11 and H^2 (positive half line)}
       \|u\|_{H^2(\Real_+)\cap H^{1,1}(\Real_+)}\leq C_M,
   \end{equation}
   where the constant $C_M$ does not depend on $r_{\pm}$.
\end{cor}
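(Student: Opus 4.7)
The plan is to extract the large-$z$ asymptotics of $[m(z;x)]_{12}$ and $[m(z;x)]_{21}$ from the integral representation \eqref{e alternative RHP solution formula}, identify each as the sum of a Fourier-type integral plus an $I_j$ correction, and then combine the isomorphism properties of the Fourier transform with Proposition \ref{p bound <x>^2I}.

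First, I would apply Proposition \ref{p cauchy operator}(ii) to \eqref{e alternative RHP solution formula}. Since $R_+$ has only a nonzero $(1,2)$-entry and $R_-$ only a nonzero $(2,1)$-entry, the $(1,2)$-component of the integrand reduces to $[m_-(y;x)]_{11}\overline{r}_+(y)e^{-2\ii yx}$ and the $(2,1)$-component to $[m_+(y;x)]_{22}r_-(y)e^{2\ii yx}$. Writing $[m_-]_{11}=1+([m_-]_{11}-1)$ and $[m_+]_{22}=1+([m_+]_{22}-1)$ and recalling the definition \eqref{e def I_1,2}, I obtain, up to absolute constants,
\begin{equation*}
    -4\lim_{|z|\to\infty}z[m(z;x)]_{12} = c_1\int_{\Real}\overline{r}_+(y)e^{-2\ii yx}\,dy+4I_1(\overline{r}_+)(x),
\end{equation*}
and an entirely analogous identity for $2\ii\lim_{|z|\to\infty}z[m(z;x)]_{21}$ with $r_-$ and $I_2(r_-)$ in place of $\overline{r}_+$ and $I_1(\overline{r}_+)$.

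Second, I would estimate the two pieces separately. By Plancherel and the standard Fourier isomorphism $\mathcal{F}:H^1(\Real)\cap L^{2,1}(\Real)\to L^{2,1}(\Real)\cap H^1(\Real)=H^{1,1}(\Real)$, the Fourier integrals above inherit the $H^1\cap L^{2,1}$-regularity of $r_\pm$ and are bounded in this norm by $CM$. Proposition \ref{p bound <x>^2I} simultaneously controls $I_1(\overline{r}_+)$ and $I_2(r_-)$ in $H^1(\Real_+)\cap L^{2,1}(\Real_+)$ by $CM^3$. Consequently
\begin{equation*}
    \left\|\lim_{|z|\to\infty}z[m(z;\cdot)]_{12}\right\|_{H^1\cap L^{2,1}(\Real_+)}+\left\|\lim_{|z|\to\infty}z[m(z;\cdot)]_{21}\right\|_{H^1\cap L^{2,1}(\Real_+)}\le C_M.
\end{equation*}

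Third, I would translate these bounds into estimates on $u$ via the reconstruction formulas. The modulus of the exponential factor in \eqref{e rec 1} is one, so $|u(x)|=4|\lim z[m]_{12}(x)|$ and the previous step directly yields $u\in L^2\cap L^{2,1}\cap L^\infty(\Real_+)$ with norm $\le C_M$ (the $L^\infty$-bound follows from Sobolev embedding $H^1(\Real_+)\hookrightarrow L^\infty(\Real_+)$ applied to the reconstructed $ue^{\ii\phi}$). For the derivative I would expand the left-hand side of \eqref{e rec 2} to get $\partial_x\overline u = 2\ii\lim z[m]_{21}-\frac{1}{2\ii}|u|^2\overline u$, so that
\begin{equation*}
    \|\partial_x u\|_{H^1\cap L^{2,1}(\Real_+)}\le C\bigl\|\lim z[m]_{21}\bigr\|_{H^1\cap L^{2,1}(\Real_+)}+\tfrac{1}{2}\bigl\||u|^2u\bigr\|_{H^1\cap L^{2,1}(\Real_+)}\le C_M,
\end{equation*}
where the cubic term is tamed by the $L^\infty$-bound on $u$ together with the already established $H^{1,1}$-estimate. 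Since $\partial_x u\in H^1$ gives $\partial_{xx}u\in L^2$, the conclusion $u\in H^2(\Real_+)\cap H^{1,1}(\Real_+)$ with norm bounded by $C_M$ follows. The main technical subtlety is precisely the handling of the cubic nonlinearity $|u|^2u$ generated by the exponential factors $e^{\pm\ii\phi/2}$, but it reduces to a tame product estimate once the $L^\infty$-control on $u$ is in place; everything else is essentially Fourier bookkeeping combined with Proposition \ref{p bound <x>^2I}.
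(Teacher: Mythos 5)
Your proposal is correct and follows essentially the same route as the paper: the paper likewise extracts $\lim_{|z|\to\infty}z[m]_{12}$ and $\lim_{|z|\to\infty}z[m]_{21}$ from \eqref{e alternative RHP solution formula} as a Fourier integral of $\overline{r}_+$ (resp.\ $r_-$) plus $I_1(\overline{r}_+)$ (resp.\ $I_2(r_-)$), controls both pieces by Fourier theory and Proposition \ref{p bound <x>^2I}, and then passes from the auxiliary quantities $w$ and $v$ back to $u$ using $|u|=|w|=|v|$ and a pointwise bound on $u_x$ involving the cubic term, exactly as in your final step. The only cosmetic difference is that the paper packages the exponential factors into the functions $w$ and $v$ of \eqref{e def w}--\eqref{e def v} rather than expanding the left-hand side of \eqref{e rec 2} directly, which is the same computation.
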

\begin{proof}
    We set
    \begin{equation}\label{e def w}
        w(x):=u(x)e^{\ii\int_{+\infty}^x|u(y)|^2dy}
    \end{equation}
    and
    \begin{equation}\label{e def v}
        v(x):=\overline{u}(x)e^{-\frac{1}{2\ii} \int_{+\infty}^x|u(y)|^2dy},
    \end{equation}
    such that the following relations hold:
    \begin{equation}\label{e relations u v w}
        \begin{aligned}
            |u(x)|&=|v(x)|=|w(x)|\\
            |u_x(x)|&\leq |v_x(x)|+\frac{1}{2}|v(x)|^3
        \end{aligned}
    \end{equation}
    Using the reconstruction formulas (\ref{e rec 1}) and (\ref{e rec 2}), Proposition \ref{p cauchy operator} (ii) and the integral equation (\ref{e alternative RHP solution formula}) we immediately find
    \begin{equation*}
        w(x)=\frac{2}{\pi\ii}\int_{\Real}\overline{r}_+(z) e^{-2\ii zx}dz\;+\;4\,I_1(\overline{r}_+)(x)
    \end{equation*}
    and
    \begin{equation*}
        e^{-\frac{1}{2\ii} \int_{+\infty}^x|u(y)|^2dy}v_x(x)
        =-\frac{1}{\pi}\int_{\Real}r_-(z) e^{2\ii zx}dz\;-\;2\ii\:I_2(r_-)(x).
    \end{equation*}
    In each of these equations the first summand on the right hand side is controlled in $H^1\cap L^{2,1}$ since $r_{\pm}\in H^1\cap L^{2,1}$. Moreover Proposition \ref{p bound <x>^2I} yields directly $w\in L^{2,1}(\Real_+)$ and $v_x\in L^{2,1}(\Real_+)$ and thus finally by (\ref{e relations u v w}) $u\in H^{1,1}(\Real_+)$. Proposition \ref{p bound <x>^2I} also leads to
    \begin{equation*}
        \partial_x\left(e^{-\frac{1}{2\ii} \int_{+\infty}^x|u(y)|^2dy}v_x(x)\right)\in L^2_x(\Real_+).
    \end{equation*}
    By a straightforward calculation we conclude $u\in H^2(\Real_+)$. The bound (\ref{e u in H^11 and H^2 (positive half line)}) is obtained from application of (\ref{e bound <x>^2I}). The proof of the Corollary is now complete.
\end{proof}
With regard to the B\"{a}klund transformation which we intend to use in the following section in order to include solitons we need the following Lemma in addition to (\ref{e u in H^11 and H^2 (positive half line)}). The only purpose in the repeating of so many details of the inverse Scattering withour poles is to deduce this Lemma which can not be found in \cite{Pelinovsky2016}.
\begin{lem}\label{l m(z0)-1 in H^11 and H^2}
   Let the assumptions of Corollary \ref{c u in H^11 and H^2 (positive half line)} be valid and fix $z_0\in\Compl\setminus\Real$. Then for the solution $m(z;x)$ of \rh \ref{rhp m^0} we have $m(z_0;\cdot)-1\in H^1(\Real_+)\cap L^{2,1}(\Real_+)$ with the bound
   \begin{equation}\label{e m(z0)-1 in H^1 and L^2,1}
       \|m(z_0;\cdot)-1\|_{H^1(\Real_+)\cap L^{2,1}(\Real_+)}\leq C_{M},
   \end{equation}
   where the constant $C_M$ depends on $z_0$ and $M$ but not on $r_{\pm}$.
\end{lem}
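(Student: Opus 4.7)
The plan is to reduce the claim to an application of Proposition \ref{p bound <x>^2I} together with the $L^2_z$-estimates on the Jost functions from \cite[Lemma~10]{Pelinovsky2016}. The key observation is that for $z_0\in\Compl\setminus\Real$, multiplication by $(y-z_0)^{-1}$ is a bounded operation on $H^1(\Real)\cap L^{2,1}(\Real)$, because both $|y-z_0|^{-1}$ and $|y-z_0|^{-2}$ are bounded by powers of $|\im z_0|^{-1}$. Consequently, for any $r\in H^1\cap L^{2,1}$ the modified coefficient $\tilde r(y):=r(y)/(y-z_0)$ still lies in $H^1\cap L^{2,1}$, with $\|\tilde r\|_{H^1\cap L^{2,1}}\leq C(z_0)\|r\|_{H^1\cap L^{2,1}}$.

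Evaluating the off-axis representation (\ref{e alternative RHP solution formula}) at $z=z_0$, each of the four entries of $m(z_0;x)-1$ becomes an integral of the form $\frac{1}{2\pi\ii}\int [m_\pm(y;x)]_{ij}\tilde r(y)e^{\pm 2\ii yx}dy$, where $\tilde r$ stands for $r_-/(y-z_0)$ or $\overline r_+/(y-z_0)$. Two of these entries --- the off-diagonal entries of $m(z_0;x)$ --- involve a diagonal entry of $m_\pm$; writing $[m_\pm(y;x)]_{ii}=1+([m_\pm(y;x)]_{ii}-1)$ splits such an integral into a Fourier-type term $\int\tilde r(y)e^{\pm 2\ii yx}dy$, which belongs to $H^1\cap L^{2,1}(\Real)$ by Plancherel, plus a term exactly of the form $I_{1,2}(\tilde r)$ of (\ref{e def I_1,2}), covered by Proposition \ref{p bound <x>^2I}. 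The remaining two entries --- the diagonal entries of $m(z_0;x)-1$ --- involve an off-diagonal entry of $m_\pm$ and do not fit Proposition \ref{p bound <x>^2I} directly; for these I would repeat its proof, substituting (\ref{e component RHP solution formula}) for $[m_+(y;x)]_{12}$ or $[m_-(y;x)]_{21}$, applying the adjoint of the corresponding projector, and splitting $[m_\mp(\cdot;x)]_{ii}=1+([m_\mp(\cdot;x)]_{ii}-1)$ into two pieces $J_1+J_2$.

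The subtle point is extracting $\langle x\rangle^{-2}$-decay (needed for the $L^{2,1}$-norm) from $J_1$ rather than only $\langle x\rangle^{-1}$. After the adjoint manipulation $J_1$ takes, for the $(1,1)$-entry, the shape $\int\overline r_+(z)e^{-2\ii zx}\Pm(\tilde r_-e^{2\ii\cdot x})(z)\,dz$, and a bare Cauchy--Schwarz combined with (\ref{e sup P^pm r 1}) yields only one factor of $\langle x\rangle^{-1}$. To gain the second factor I would rewrite $\overline r_+ e^{-2\ii\cdot x}=(\Pp-\Pm)(\overline r_+ e^{-2\ii\cdot x})$ via Sokhotski--Plemelj (\ref{e Sokhotski-Plemelj}) and invoke the Hardy-space cancellation
\begin{equation*}
\int_\Real \Pm(f)(z)\,\Pm(g)(z)\,dz=0,\qquad f,g\in L^2(\Real),
\end{equation*}
which holds because $\Pm(f),\Pm(g)$ are boundary values from $\Compl^-$ of $H^2(\Compl^-)$-functions whose product lies in $H^1(\Compl^-)$ and decays at infinity (equivalently, both Fourier transforms are supported on $(-\infty,0]$, so their convolution vanishes at the origin). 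Only the cross term $-\int\Pp(\overline r_+e^{-2\ii\cdot x})(z)\Pm(\tilde r_- e^{2\ii\cdot x})(z)\,dz$ survives, and Cauchy--Schwarz combined with two independent applications of (\ref{e sup P^pm r 1}) delivers the desired $\langle x\rangle^{-2}$-decay. The companion piece $J_2$ is controlled by a direct Cauchy--Schwarz once one notes $\|[m_\mp(\cdot;x)]_{ii}-1\|_{L^2_z}\lesssim\langle x\rangle^{-1}$, which follows from (\ref{e component RHP solution formula}) and the $L^2_z$-bound on the off-diagonal Jost entries from \cite[Lemma~10]{Pelinovsky2016}.

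The $H^1(\Real_+)$-part is obtained by differentiating in $x$ and writing $2\ii y=2\ii(y-z_0)+2\ii z_0$, which presents the derivative as either a copy of the $L^{2,1}$-quantities already controlled or $2\ii z_0\cdot(m(z_0;x)-1)$; the $\partial_x[m_\pm]_{ij}$-terms are closed by (\ref{e sup P^pm r 3}) together with the uniform $L^2_z$-bound on $\partial_x[m_\pm]_{ij}$ from \cite[Lemma~10]{Pelinovsky2016}. The principal obstacle throughout is exactly the promotion from $\langle x\rangle^{-1}$ to $\langle x\rangle^{-2}$ decay in the two entries involving an off-diagonal component of $m_\pm$; the Hardy-space cancellation combined with the paired use of (\ref{e sup P^pm r 1}) is what bridges that gap.
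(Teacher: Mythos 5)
Your proposal is correct and follows the same skeleton as the paper's proof: the off-diagonal entries of $m(z_0;x)$ are reduced, exactly as in the paper, to a Fourier-type term plus $I_{1,2}(\tilde r)$ with $\tilde r(y)=r(y)/(y-z_0)$, and then handled by Proposition \ref{p bound <x>^2I}; the diagonal entries are treated by substituting the integral equation for the off-diagonal Jost entry, passing the projector onto the other factor, and splitting off the contribution of $[m_\mp(\cdot;x)]_{ii}-1$. The one place where you genuinely diverge is the leading piece $J_1$ of the diagonal entries. The paper absorbs $\overline r_+(y)\,\Pm(\tilde r_-e^{2\ii\cdot x})(y)$ into a single function $R_+$, proves a uniform $H^1\cap L^{2,1}$ bound for it, and then appeals to ``Fourier theory'' for $\int R_+(y)e^{-2\ii yx}dy$ --- a step that is quite terse, since $R_+$ itself depends on $x$ and a bare Cauchy--Schwarz only yields one factor of $\langle x\rangle^{-1}$. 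Your Sokhotski--Plemelj decomposition of $\overline r_+e^{-2\ii\cdot x}$, the Hardy-space orthogonality $\int_\Real\Pm(f)\Pm(g)\,dz=0$, and the two paired applications of (\ref{e sup P^pm r 1}) constitute a correct and more explicit mechanism for extracting the required $\langle x\rangle^{-2}$ decay on $\Real_+$; this buys a self-contained justification of precisely the point the paper compresses. Your treatment of $J_2$ (Cauchy--Schwarz against $\|[m_\mp(\cdot;x)]_{ii}-1\|_{L^2_z}\lesssim\langle x\rangle^{-1}$, which follows from \cite[Lemma 10]{Pelinovsky2016}) and of the $H^1_x$ part (writing $y=(y-z_0)+z_0$ and invoking the $\partial_xm_\pm$ bounds) matches the estimates the paper routes through Proposition \ref{p bound <x>^2I}.
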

\begin{proof}
    Fix $z_0\in\Compl\setminus\Real$. We use (\ref{e alternative RHP solution formula}) to find
    \begin{equation}\label{e m21 m12 in L^2,1}
        \begin{aligned}
            &[m(z_0;x)]_{12}= \frac{1}{2\pi\ii} \int_{\Real}\frac{[m_-(y;x)]_{11}\overline{r}_+(y) e^{-2\ii yx}}{y-z_0}dy= \frac{1}{2\pi\ii}\int_{\Real}\widetilde{r}_+(z) e^{-2\ii zx}dz\;+\;I_1(\widetilde{r}_+)(x),\\
            &[m(z_0;x)]_{21}= \frac{1}{2\pi\ii} \int_{\Real}\frac{[m_+(y;x)]_{22}r_-(y) e^{2\ii yx}}{y-z_0}dy= \frac{1}{2\pi\ii}\int_{\Real}\widetilde{r}_-(z) e^{2\ii zx}dz\;+\;I_2(\widetilde{r}_-)(x),
        \end{aligned}
    \end{equation}
    where $\widetilde{r}_-(z):={r}_-(z)/(z-z_0)$ and $\widetilde{r}_+(z):=\overline{r}_+(z)/(z-z_0)$, respectively. Due to the fact that $\|\widetilde{r}_{\pm}\|_{H^1\cap L^{2,1}}\leq c \|{r}_{\pm}\|_{H^1\cap L^{2,1}}$, where the constant $c>0$ depends on $z_0$ only, and using Proposition \ref{p bound <x>^2I} we end up with (\ref{e m(z0)-1 in H^1 and L^2,1}) for the non diagonal entries $m_{12}$ and $m_{21}$.
    Using again (\ref{e alternative RHP solution formula}) we obtain
    \begin{equation*}
        [m(z_0;x)]_{11}=1+\frac{1}{2\pi\ii} \int_{\Real} \frac{[m_+(y;x)]_{12}r_-(y) e^{2\ii yx}}{y-z_0}dy,
    \end{equation*}
    where we can insert $[m_+(y;x)]_{12}=\Pp ([m_-(z;x)]_{11} \overline{r}_+(z) e^{-2\ii zx})(y)$ from the integral equation (\ref{e integral equation for m+-}). Then we integrate by parts and obtain
    \begin{equation}\label{e formula for m(z_0;x)-1 in the proof of the lemma}
        [m(z_0;x)]_{11}=1-\frac{1}{2\pi\ii} \int_{\Real} [m_-(y;x)]_{11} \overline{r}_+(y) e^{-2\ii yx}\;\Pm (\widetilde{r}_-(z) e^{2\ii zx})(y)\:dy,
    \end{equation}
    where we put again $\widetilde{r}_-(z):={r}_-(z)/(z-z_0)$. Furthermore we set
    \begin{equation*}
        R_+(y):=\overline{r}_+(y)\;\Pm (\widetilde{r}_-(z) e^{2\ii zx})(y).
    \end{equation*}
    To prove $R_+\in H^1\cap L^{2,1}$ we recall the continuity property $\|\Ppm\|_{L^2\to L^2}=1$. One consequence is that $\|\Pm (\widetilde{r}_-(z) e^{2\ii zx})(\cdot)\|_{L^2} \leq c \|r_-\|_{L^2}$. Additionally, we find
    \begin{equation*}
        \|\partial_z \Pm (\widetilde{r}_-(z) e^{2\ii zx})(z)\|_{L^2_z}\leq \| \Pm (\widetilde{r}'_-(z) e^{2\ii zx})(\cdot)\|_{L^2_z} +\|2\ii x \Pm (\widetilde{r}_-(z) e^{2\ii zx})(\cdot)\|_{L^2_z},
    \end{equation*}
    where we can apply the bound (\ref{e sup P^pm r 1}) of Proposition \ref{p cauchy operator} and again $\|\Ppm\|_{L^2\to L^2}=1$. Thus, we are able to control  $\Pm (\widetilde{r}_-(z) e^{2\ii zx})(\cdot)$ in $H^1$ uniformly for $x>0$. Altogether, we have shown $\|R_+\|_ {H^1\cap L^{2,1}}\leq c \|r_+\|_ {H^1\cap L^{2,1}}\|r_-\|_ {H^1}$, which is needed because we want to apply Proposition \ref{p bound <x>^2I}. Therefore we write (\ref{e formula for m(z_0;x)-1 in the proof of the lemma}) in the form
    \begin{equation*}
        [m(z_0;x)]_{11}-1=\frac{1}{2\pi\ii} \int_{\Real} R_+(y)e^{-2\ii yx}\:dy+ \:I_1(R_+\!)(x).
    \end{equation*}
    Analogously, it can be carried out in a similar way, that for $R_-(y):=r_-(y)\;\Pp (\widetilde{r}_+(z) e^{-2\ii zx})(y)$,
    \begin{equation*}
        [m(z_0;x)]_{22}-1=\frac{1}{2\pi\ii} \int_{\Real} R_-(y)e^{2\ii yx}\:dy+ \:I_2(R_-\!)(x).
    \end{equation*}
    Combining Fourier theory and the bound (\ref{e bound <x>^2I}) we have now accomplished the proof of (\ref{e m(z0)-1 in H^1 and L^2,1}) also for the diagonal entries.
\end{proof}
Estimates on the negative half-line can be found by modifying the solution $m(z;x)$ of \rh \ref{rhp m^0} in the following way:
\begin{equation}\label{e def m delta}
    m_{\delta}(z;x):=m(z;x)
    \left[
      \begin{array}{cc}
        \delta^{-1}(z) & 0 \\
        0 & \delta(z) \\
      \end{array}
    \right],
\end{equation}
where
\begin{equation}\label{e delta}
    \delta(z)=\exp\left(\frac{1}{2\pi\ii} \int_{\Real}\frac{\log(1+ \overline{r}_+(y)r_-(y))}{y-z}dy\right).
\end{equation}
In Proposition 8 in \cite{Pelinovsky2016} it is shown that $\log(1+ \overline{r}_+r_-)\in L^2(\Real)$ due to (\ref{e r constraint}). Hence, the integral in (\ref{e delta}) is well-defined and $\delta$ solves the following RHP:
\begin{framed}
    \begin{rhp}\label{rhp delta}
        Find a scalar valued function $\Compl\ni z\mapsto \delta(z)$ which satisfies
        \begin{enumerate}[(i)]
          \item $\delta(z)$ is meromorphic in $\Compl\setminus\Real$.
          \item $\delta(z)=1+\mathcal{O} \left(\frac{1}{z}\right)$ as $|z|\to\infty$.
          \item The non-tangential boundary values $\delta_{\pm}(z)$ exist for $z\in\Real$ and satisfy the jump relation
              \begin{equation}\label{e jump delta}
                 \delta_+(z)=\left[1+ \phantom{\widehat{l}}\overline{r}_+(z)r_-(z) \right] \delta_-(z).
              \end{equation}
        \end{enumerate}
    \end{rhp}
\end{framed}
Using the symmetry $\delta(\overline{z})=\overline{\delta}^{-1}(z)$ and the jump condition (\ref{e jump delta}) it is an easy exercise to verify, that the function $m_{\delta}(z;x)$ defined in \ref{e def m delta} is a solution to the following \rh:
\begin{samepage}
\begin{framed}
    \begin{rhp}\label{rhp m^0 delta}
        Find for each $x\in\Real$ a $2\times 2$-matrix valued function $\Compl\ni z\mapsto m_{\delta}(z;x)$ which satisfies
        \begin{enumerate}[(i)]
          \item $m_{\delta}(z;x)$ is meromorphic in $\Compl\setminus\Real$ (with respect to the parameter $z$).
          \item $m_{\delta}(z;x)=1+\mathcal{O} \left(\frac{1}{z}\right)$ as $|z|\to\infty$.
          \item The non-tangential boundary values $m_{\pm,\delta}(z;x)$ exist for $z\in\Real$ and satisfy the jump relation
              \begin{equation}\label{e jump m^0 delta}
                  m_{+,\delta}=m_{-,\delta}(1+R_{\delta}) ,\quad\text{where}\quad
                  R_{\delta}(z;x):=
                  \left[
                    \begin{array}{cc}
                       0 & e^{-2\ii zx}\overline{r}_{+,\delta}(z) \\
                       e^{2\ii zx}r_{-,\delta}(z) & \overline{r}_{+,\delta}(z) r_{-,\delta}(z) \\
                    \end{array}
                  \right],
              \end{equation}
              and $r_{\pm,\delta}(z):= \overline{\delta}_+(z) \overline{\delta}_-(z)r_{\pm}(z)$.
        \end{enumerate}
    \end{rhp}
\end{framed}
\end{samepage}
The new jump matrix $R_{\delta}$ admits an factorization analogously to (\ref{e def R+ and R-}). For
\begin{equation}\label{e def R+ and R- delta}
    R_{+,\delta}(z;x):=
                  \left[
                    \begin{array}{cc}
                       0 & 0 \\
                       e^{2\ii zx}r_{-,\delta}(z) & 0 \\
                    \end{array}
                  \right],\qquad
    R_{-,\delta}(z;x):=
                  \left[
                    \begin{array}{cc}
                       0 & e^{-2\ii zx}\overline{r}_{+,\delta}(z) \\
                       0 & 0 \\
                    \end{array}
                  \right],
\end{equation}
we find
$m_{+,\delta} -m_{-,\delta}=m_{-,\delta}R_{+,\delta} +m_{+,\delta}R_{-,\delta}$ for $z\in \Real$, such that analogously to (\ref{e alternative RHP solution formula}),
\begin{equation*}
    m_{\delta}(z;x)=1+\frac{1}{2\pi\ii}\int_{\Real} \frac{m_{-,\delta}(y;x)R_{+,\delta}(y;x)+ m_{+,\delta}(y;x)R_{-,\delta}(y;x)}{y-z}dy.
\end{equation*}
The following exemplary calculation shows why \rh \ref{rhp m^0 delta} can be studied in order to extend Lemma \ref{l m(z0)-1 in H^11 and H^2} and Corollary \ref{c u in H^11 and H^2 (positive half line)} to the negative half-line. We have for $z_0\in\Compl\setminus\Real$
\begin{eqnarray*}
  [m_{\delta}(z_0;x)]_{12}&=& \frac{1}{2\pi\ii} \int_{\Real}\frac{[m_{+,\delta}(y;x)]_{11} \overline{r}_{+,\delta}(y) e^{-2\ii yx}}{y-z_0}dy \\
   &=&  \frac{1}{2\pi\ii} \int_{\Real}\widetilde{r}_{+,\delta}(y) e^{-2\ii yx}dy+I_{1,\delta}(\widetilde{r}_{+,\delta})
\end{eqnarray*}
where $\widetilde{r}_{+,\delta}(z):= \overline{r}_{+,\delta}(z)/(z-z_0)$ and
\begin{equation*}
    I_{1,\delta}(r):=\frac{1}{2\pi\ii} \int_{\Real}[m_{+,\delta}(y;x)-1]_{11} r(y) e^{-2\ii yx}dy.
\end{equation*}
The functional $I_{1,\delta}(r)$ satisfies the same estimates as in Proposition \ref{p bound <x>^2I} with $\Real_+$ replaced by $\Real_-$ because the operators $\Pp$ and $\Pm$ swap their places in comparison with the integral equation (\ref{e integral equation for m+-}).
\begin{lem}\label{l m(z0)-1 in in H^1 and L^2,1 (delta)}
   Fix $M>0$ and $z_0\in\Compl\setminus\Real$ and let the assumptions of Lemma \ref{l solvability of RHP N=0} be valid. If in addition $\|r_+\|_{H^1\cap L^{2,1}}+\|r_-\|_{H^1\cap L^{2,1}}\leq M$, then for the solution $m_{\delta}(z;x)$ of \rh \ref{rhp delta} we have $m_{\delta}(z_0;\cdot)-1\in H^1(\Real_-)\cap L^{2,1}(\Real_-)$ with the bound
   \begin{equation*}
       \|m_{\delta}(z_0;\cdot)-1\|_{H^1(\Real_-)\cap L^{2,1}(\Real_-)}\leq C_{M},
   \end{equation*}
   where the constant $C_M$ depends on $z_0$ and $M$, but not on $r_{\pm}$.
\end{lem}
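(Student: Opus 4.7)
The plan is to mirror the proof of Lemma \ref{l m(z0)-1 in H^11 and H^2} but applied to RHP \ref{rhp m^0 delta} instead of RHP \ref{rhp m^0}, so that the modified functionals $I_{1,\delta}$ and $I_{2,\delta}$ replace $I_1$ and $I_2$. As the author already observes in the paragraph immediately preceding the lemma, because the factorization $1+R_\delta=(1+R_{+,\delta})(1+R_{-,\delta})$ places the lower--triangular piece in $R_{+,\delta}$ (whereas $R_+$ from (\ref{e def R+ and R-}) was upper--triangular), the projectors $\Pp$ and $\Pm$ swap their roles in the integral equation for $m_{\pm,\delta}$. Hence $I_{1,\delta}$ and $I_{2,\delta}$ enjoy exactly the bounds of Proposition \ref{p bound <x>^2I} with $\Real_+$ replaced by $\Real_-$.

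First I would handle the off--diagonal entries $[m_\delta(z_0;x)]_{12}$ and $[m_\delta(z_0;x)]_{21}$. Using the alternative integral representation
\begin{equation*}
    m_\delta(z;x)=1+\frac{1}{2\pi\ii}\int_\Real \frac{m_{-,\delta}(y;x)R_{+,\delta}(y;x)+m_{+,\delta}(y;x)R_{-,\delta}(y;x)}{y-z}dy,
\end{equation*}
evaluation at $z=z_0$ and introduction of $\widetilde r_{+,\delta}(z):=\overline r_{+,\delta}(z)/(z-z_0)$, $\widetilde r_{-,\delta}(z):=r_{-,\delta}(z)/(z-z_0)$ rewrites each of these entries as a pure Fourier integral of $\widetilde r_{\pm,\delta}$ plus $I_{1,\delta}(\widetilde r_{+,\delta})$ or $I_{2,\delta}(\widetilde r_{-,\delta})$. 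Standard Fourier estimates control the first summand in $H^1\cap L^{2,1}(\Real_-)$, and the $\Real_-$ analog of Proposition \ref{p bound <x>^2I} controls the second.

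Next, for the diagonal entries I would repeat the bootstrap used for $[m(z_0;x)]_{11}$ in the proof of Lemma \ref{l m(z0)-1 in H^11 and H^2}. Insert the Cauchy integral for $[m_\delta(z_0;x)]_{11}$, substitute into it the jump integral equation $[m_{+,\delta}]_{11}=1+\Pp(\ldots)$ (with $\Pp$ now playing the role that $\Pm$ had in the earlier argument), and integrate by parts to transfer the Cauchy projection onto a factor of the form $\widetilde r_{-,\delta}(z)e^{2\ii zx}$. This recasts $[m_\delta(z_0;x)]_{11}-1$ as a Fourier integral of an auxiliary kernel plus $I_{1,\delta}$ applied to a composite object of type $\overline r_{+,\delta}(y)\Pm\bigl(\widetilde r_{-,\delta}(z)e^{2\ii zx}\bigr)(y)$; an entirely parallel manipulation produces the corresponding representation for $[m_\delta(z_0;x)]_{22}-1$. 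I must then verify that these composite kernels are in $H^1\cap L^{2,1}$ uniformly in $x\in\Real_-$, using $L^2$--boundedness of $\Ppm$ together with the estimates (\ref{e sup P^pm r 1})--(\ref{e sup P^pm r 3}), exactly as at the end of the proof of Lemma \ref{l m(z0)-1 in H^11 and H^2}. Application of the $\Real_-$ version of the bound (\ref{e bound <x>^2I}) then completes the diagonal case.

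The main obstacle is the verification that $r_{\pm,\delta}=\overline\delta_+\overline\delta_-\,r_\pm$ still lies in $H^1\cap L^{2,1}$, and that the derived quantities $\widetilde r_{\pm,\delta}$ retain the bounds used in Proposition \ref{p bound <x>^2I}. This reduces to showing that multiplication by $\overline\delta_+\overline\delta_-$ is a bounded operator on $H^1(\Real)\cap L^{2,1}(\Real)$. Boundedness on $L^\infty$ follows from RHP \ref{rhp delta} together with the pointwise lower bound (\ref{e r constraint}) on $1+\overline r_+ r_-$, which prevents $\delta$ from vanishing or blowing up. For the $H^1$ estimate one differentiates through the Cauchy integral (\ref{e delta}) and uses that $\log(1+\overline r_+ r_-)\in H^1(\Real)$, which is available from $r_\pm\in H^1\cap L^{2,1}$ combined with (\ref{e r constraint}) — essentially Proposition~8 of \cite{Pelinovsky2016}. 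Once this multiplier estimate is in place, the entire argument is a line--by--line translation of the proof of Lemma \ref{l m(z0)-1 in H^11 and H^2}, with $m$ replaced by $m_\delta$ and $\Real_+$ replaced by $\Real_-$, and the constant $C_M$ inherits its dependence on $z_0$ from the factors $(z-z_0)^{-1}$.
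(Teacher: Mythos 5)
Your proposal is correct and follows essentially the same route as the paper, which does not write out a formal proof but instead sketches exactly this argument in the paragraph preceding the lemma: the exemplary computation of $[m_{\delta}(z_0;x)]_{12}$ via $I_{1,\delta}(\widetilde r_{+,\delta})$ and the observation that $\Pp$ and $\Pm$ swap roles, so that Proposition \ref{p bound <x>^2I} holds with $\Real_+$ replaced by $\Real_-$. Your additional care with the multiplier bound for $\overline{\delta}_+\overline{\delta}_-$ on $H^1\cap L^{2,1}$ (via the lower bound (\ref{e r constraint}) and $\log(1+\overline r_+ r_-)\in H^1$) is a detail the paper delegates to \cite{Pelinovsky2016}, and your treatment of it is sound.
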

With respect to the potential $u(x)$ the two Riemann--Hilbert problems \ref{rhp m^0} and \ref{rhp m^0 delta} are equivalent in the following sense:
\begin{equation}\label{e equivalent rhps}
    \begin{aligned}
        \lim_{|z|\to\infty}z\;[m(z;x)]_{12}= \lim_{|z|\to\infty}z\;[m_{\delta}(z;x)]_{12},\\
        \lim_{|z|\to\infty}z\;[m(z;x)]_{21}= \lim_{|z|\to\infty}z\;[m_{\delta}(z;x)]_{21}.
    \end{aligned}
\end{equation}
This observation follows directly from the definition (\ref{e def m delta}) and leads to the following extension of Corollary \ref{c u in H^11 and H^2 (positive half line)}.
\begin{cor}\label{c u in H^11 and H^2 (negative half line)}
   Fix $M>0$. Under the assumptions of Lemma \ref{l solvability of RHP N=0} and if $\|r_+\|_{H^1\cap L^{2,1}}+\|r_-\|_{H^1\cap L^{2,1}}\leq M$, the potential $u$ reconstructed from the solution $m$ of \rh \ref{rhp m^0} by using (\ref{e rec 1}) and (\ref{e rec 2}) lies in $H^2(\Real_-)\cap H^{1,1}(\Real_-)$ and satisfies the bound
   \begin{equation}\label{e u in H^11 and H^2 (negative half line)}
       \|u\|_{H^2(\Real_-)\cap H^{1,1}(\Real_-)}\leq C_M
   \end{equation}
   where the constant $C_M$ does not depend on $r_{\pm}$.
\end{cor}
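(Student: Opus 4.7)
The plan is to run the argument of Corollary \ref{c u in H^11 and H^2 (positive half line)} verbatim, but with $m$ replaced by the rotated solution $m_{\delta}$ of \rh\ref{rhp m^0 delta}. By the identities (\ref{e equivalent rhps}), the reconstruction formulas (\ref{e rec 1}) and (\ref{e rec 2}) give the same potential $u$ whether we evaluate the $z\to\infty$ residues of $m$ or of $m_\delta$. Hence, if we can repeat the analysis of $w$ and $v_x$ on the negative half-line using $m_\delta$, we obtain the missing half of the reconstruction estimate.

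More precisely, I would start from the integral representation written just after (\ref{e def R+ and R- delta}) for $m_\delta$. Expanding the $12$ and $21$ components and reading off the large-$z$ asymptotics exactly as in the proof of Corollary \ref{c u in H^11 and H^2 (positive half line)}, we get
\begin{equation*}
    w(x) = \frac{2}{\pi\ii}\int_{\Real}\overline{r}_{+,\delta}(z)\,e^{-2\ii zx}\,dz + 4\,I_{1,\delta}(\overline{r}_{+,\delta})(x)
\end{equation*}
and an analogous formula for $e^{-\frac{1}{2\ii}\int_{+\infty}^{x}|u|^2\,dy}\,v_x(x)$ involving $I_{2,\delta}(r_{-,\delta})$. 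The key observation, already stressed in the excerpt, is that the factorization (\ref{e def R+ and R- delta}) interchanges $\Pp$ and $\Pm$ relative to (\ref{e def R+ and R-}). Consequently, the proof of Proposition \ref{p bound <x>^2I} applies to $I_{1,\delta}$ and $I_{2,\delta}$ after replacing the supremum over $x_0<x<\infty$ by a supremum over $-\infty<x<x_0$, yielding the same trilinear bound but with $H^1(\Real_+)\cap L^{2,1}(\Real_+)$ replaced by $H^1(\Real_-)\cap L^{2,1}(\Real_-)$.

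The main obstacle is to ensure that the rotated reflection coefficients $r_{\pm,\delta}=\overline{\delta}_+\overline{\delta}_-\,r_{\pm}$ still lie in $H^1(\Real)\cap L^{2,1}(\Real)$ with norms controlled by a constant depending only on $M$. For this one uses that, under the constraint (\ref{e r constraint}), $\log(1+\overline{r}_+r_-)\in H^1\cap L^{2,1}$, so that the boundary values $\delta_\pm$ are bounded and their derivatives are controlled through the Sokhotski--Plemelj formulas (\ref{e Sokhotski-Plemelj}) together with the continuity of $\Ppm$ on $L^2$. This is a direct calculation carried out in \cite{Pelinovsky2016}, and once it is in place $\|r_{\pm,\delta}\|_{H^1\cap L^{2,1}}\leq C_M\|r_\pm\|_{H^1\cap L^{2,1}}$ follows.

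With the trilinear estimates on $I_{1,\delta}$ and $I_{2,\delta}$ on $\Real_-$ in hand, the proof concludes as in Corollary \ref{c u in H^11 and H^2 (positive half line)}: the Fourier-type integrals in $r_{\pm,\delta}$ are controlled in $H^1(\Real_-)\cap L^{2,1}(\Real_-)$ by Plancherel, the functionals $I_{j,\delta}$ by the mirrored Proposition \ref{p bound <x>^2I}, and the relations (\ref{e relations u v w}) together with a straightforward differentiation of $e^{-\frac{1}{2\ii}\int_{+\infty}^x|u|^2dy}v_x$ upgrade control of $w$ and $v_x$ on $\Real_-$ to control of $u$ in $H^2(\Real_-)\cap H^{1,1}(\Real_-)$, producing the bound (\ref{e u in H^11 and H^2 (negative half line)}).
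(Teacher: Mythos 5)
Your proposal is correct and follows essentially the same route the paper intends: the paper itself gives no separate proof of this corollary, but the machinery it sets up beforehand (the conjugation by $\delta^{\sigma_3}$, the reversed factorization (\ref{e def R+ and R- delta}) which swaps $\Pp$ and $\Pm$ so that Proposition \ref{p bound <x>^2I} holds on $\Real_-$, and the equivalence (\ref{e equivalent rhps}) of the large-$z$ limits) is exactly what you use. Your added remark that one must verify $\|r_{\pm,\delta}\|_{H^1\cap L^{2,1}}\leq C_M\|r_{\pm}\|_{H^1\cap L^{2,1}}$ via the properties of $\delta_\pm$ is a point the paper leaves implicit but is correctly handled by the reference to \cite{Pelinovsky2016}.
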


\section{Adding a pole}\label{s adding a pole}
In this section we want to prove the solvability of \rh \ref{rhp m} if $N=1$. An auto-B\"{a}cklund transformation will establish a connection between the cases $N=1$ and $N=0$. All formulas were found in \cite{Deift2011} and \cite{Cuccagna2014}, where the B\"{a}cklund transformation was used in the context of the NLS equation. \\
Assume that a function $u^{(1)}\in H^2\cap H^{1,1}$ provides scattering data $\mathcal{S}^{(1)}=\set{r^{(1)}_{\pm};z_1;c_1}$. We recall the corresponding Riemann--Hilbert problem (without time dependence):
\begin{samepage}
\begin{framed}
    \begin{rhp}\label{rhp m^1}
        Find for each $x\in\Real$ a $2\times 2$-matrix valued function $\Compl\ni z\mapsto m^{(1)}(z;x)$ which satisfies
        \begin{enumerate}[(i)]
          \item $m^{(1)}(z;x)$ is meromorphic in $\Compl\setminus\Real$ (with respect to the parameter $z$).
          \item $m^{(1)}(z;x)=1+\mathcal{O}\left(\frac{1}{z}\right)$ as $|z|\to\infty$.
          \item The non-tangential boundary values $m^{(1)}_{\pm}(z;x)$ exist for $z\in\Real$ and satisfy the jump relation
              \begin{equation}\label{e jump m^1}
                  m^{(1)}_+=m^{(1)}_-(1+R), \quad\text{where}\quad
                  R(z;x):=
                  \left[
                    \begin{array}{cc}
                       \overline{r}_+(z)r_-(z) & e^{-2\ii zx}\overline{r}_+(z) \\
                       e^{2\ii zx}r_-(z) & 0 \\
                    \end{array}
                  \right].
              \end{equation}
          \item $m^{(1)}$ has simple poles at $z_1$ and $\overline{z}_1$ with
              \begin{equation}\label{e Res m^1}
                  \begin{aligned}
                     \res_{z=z_1}m^{(1)}(z;x)&=\lim_{z\to z_1}m^{(1)}(z;x)
                  \left[
                      \begin{array}{cc}
                        0 & 0 \\
                        2\ii\lambda_1c_1 e^{2\ii z_1x} & 0
                      \end{array}
                  \right],\\
                     \res_{z=\overline{z}_1} m^{(1)}(z;x)&=\lim_{z\to \overline{z}_1}m^{(1)}(z;x)
                  \left[
                      \begin{array}{cc}
                      0 & \frac{-\overline{c}_1 e^{-2\ii \overline{z}_1x}}{2\ii\lambda_1} \\
                      0 & 0
                      \end{array}
                  \right].
                  \end{aligned}
              \end{equation}
        \end{enumerate}
    \end{rhp}
\end{framed}
\end{samepage}
 By construction, the constraints (\ref{e relation r+ r-}) - (\ref{e r constraint}) hold. Now we change these data by removing the pole $z_1$ and modifying the reflection coefficient in the following way:
\begin{equation}\label{e r^0}
    r^{(0)}_{\pm}(z):=r^{(1)}_{\pm}(z) \frac{z-\overline{z}_1}{z-z_1}.
\end{equation}
Obviously, $r^{(0)}_{\pm}$ satisfy (\ref{e relation r+ r-}) -- (\ref{e r constraint}) and moreover, $r^{(1)}_{\pm}\in H^1\cap L^{2,1}$ implies $r^{(0)}_{\pm}\in H^1\cap L^{2,1}$. Hence, all assumptions of Lemma \ref{l solvability of RHP N=0} are satisfied and we get an unique solution $m^{(0)}(z;x)$ of \rh \ref{rhp m^0} with our new data $\mathcal{S}^{(0)}:=\set{r^{(0)}_{\pm}}$. This procedure defines a map $u^{(1)}(x)\mapsto u^{(0)}(x)$, where $u^{(0)}(x)$ is defined to be the pure radiation potential  which is associated to $m^{(0)}(z;x)$ by the reconstruction formulas (\ref{e rec 1}) and (\ref{e rec 1}), respectively.
\subsection{B\"{a}cklund transformation for $x>0$}
What we will do in this subsection is to explore the map $u^{(1)}\leftrightarrow u^{(0)}$ for $x>0$. Therefore we introduce the functions $w^{(j)}$, $v^{(j)}$ for $j=0,1$, which are related to $u^{(j)}$ by (\ref{e def w}) and (\ref{e def v}), respectively. Next we define the matrix
\begin{equation*}
    A(x)=
    \left[
      \begin{array}{cc}
        a_{11}(x) & a_{12}(x) \\
        a_{21}(x) & a_{22}(x) \\
      \end{array}
    \right]
\end{equation*}
by
\begin{equation*}
        \left(
          \begin{array}{c}
            a_{11}(x) \\
            a_{21}(x) \\
          \end{array}
        \right)
        :=m^{(0)}(z_1;x)
        \left(
          \begin{array}{c}
            1 \\
            - \frac{2\ii\lambda_1c_1 e^{2\ii z_1x}} {z_1-\overline{z}_1} \\
          \end{array}
        \right),
        \quad
        \left(
          \begin{array}{c}
            a_{12}(x) \\
            a_{22}(x) \\
          \end{array}
        \right)
        :=m^{(0)}(\overline{z}_1;x)
        \left(
          \begin{array}{c}
            \frac{\overline{c}_1 e^{-2\ii \overline{z}_1 x}} {2\ii\overline{\lambda}_1(\overline{z}_1-z_1)} \\
            1 \\
          \end{array}
        \right).
\end{equation*}
In order to define the B\"{a}cklund transformation it is necessary to know that there is no $x$ such that the determinant of $A(x)$ vanishes.
\begin{prop}\label{p A inverse}
    The matrix $A$ is invertible for all $x\in\Real$. Moreover, if $\|r^{(1)}_+\|_{H^1\cap L^{2,1}}+\|r^{(1)}_-\|_{H^1\cap L^{2,1}}\leq M$, then
    \begin{equation} \label{e lower bound for det A}
    |\det(A(x))|^{-1}\leq C_M,\quad\text{for all }x>0,
    \end{equation}
    where the constant $C_M$ does not depend on $x$ and $r_{\pm}$.
\end{prop}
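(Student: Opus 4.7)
The plan is to analyze $\det A(x)$ through an explicit structural formula obtained from the symmetry (\ref{e symmetrie of m}), combined with an asymptotic analysis as $x\to+\infty$.

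\textbf{Asymptotics.} Applying Lemma \ref{l m(z0)-1 in H^11 and H^2} at $z_0 = z_1$ and $z_0 = \overline{z}_1$, together with the embedding $H^1(\Real_+)\hookrightarrow C_0(\Real_+)$, gives $m^{(0)}(z_1;x),\,m^{(0)}(\overline{z}_1;x)\to I$ as $x\to+\infty$ uniformly for $\|r_{\pm}\|_{H^1\cap L^{2,1}}\leq M$. Since $\im(z_1)>0$, the scalar factors
\begin{equation*}
\alpha(x):=-\frac{2\ii\lambda_1 c_1}{z_1-\overline{z}_1}e^{2\ii z_1 x},\qquad \beta(x):=\frac{\overline{c}_1}{2\ii\overline{\lambda}_1(\overline{z}_1-z_1)}e^{-2\ii\overline{z}_1 x}
\end{equation*}
in $v_1=(1,\alpha)^T$ and $v_2=(\beta,1)^T$ decay like $e^{-2\im(z_1)x}$. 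Consequently $A(x)\to I$ exponentially fast, $\det A(x)\to 1$, and one obtains a uniform lower bound $|\det A(x)|\geq 1/2$ on some half-line $(X_M,\infty)$, with $X_M$ depending only on $M$ and on the soliton data.

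\textbf{Structural formula.} Write $M_i:=m^{(0)}(z_i;x)$ with $z_2:=\overline{z}_1$ and columns $M_i=[p_i\,|\,q_i]$. Reading off the columns of the symmetry (\ref{e symmetrie of m}) evaluated at $z=z_1$ produces the identities $p_1=W(x,z_1)\overline{q_2}$ and $q_1=(4z_1)^{-1}W(x,z_1)\overline{p_2}$, where $W(x,z_1)$ is the left matrix factor in (\ref{e symmetrie of m}). A direct conjugation of $\alpha$ verifies the arithmetic identity $\alpha(x)/(4z_1)=-\overline{\beta(x)}$. Substituting these relations into the multilinear expansion of $\det A$ causes the four contributions to telescope into
\begin{equation*}
\det A(x)=\det\bigl[W(x,z_1)\,\overline{u(x)},\;\tilde u(x)\bigr],\qquad u:=q_2-\beta p_2,\quad\tilde u:=q_2+\beta p_2.
\end{equation*}
In the pure-radiation limit $r_{\pm}\equiv 0$ one has $M_2\equiv I$ and $w^{(0)}\equiv 0$, so $W(x,z_1)$ reduces to the antidiagonal matrix $J(z_1)=\begin{pmatrix}0&1\\4z_1&0\end{pmatrix}$ and the formula specializes to $\det A(x)=1-\alpha(x)\beta(x)$, which is bounded away from zero because $\arg(z_1)\in(0,\pi)$ prevents $\alpha\beta$ from attaining the value $1$.

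\textbf{Uniform lower bound.} The main obstacle is propagating this non-vanishing to the general case, uniformly in $r_{\pm}$, on the compact interval $[0,X_M]$. The plan is to combine the $L^{\infty}(\Real_+)$-bound on $M_i-I$ from Lemma \ref{l m(z0)-1 in H^11 and H^2} with the $L^{\infty}$-bound on $w^{(0)}$ from Corollary \ref{c u in H^11 and H^2 (positive half line)} (both of magnitude $C_M$) to control every entry in the structural formula. Expanding the determinant in powers of $\beta$ and using $|\beta(x)|\lesssim e^{-2\im(z_1)x}$, one isolates the leading $(x,r_{\pm})$-independent term descending from $1-\alpha\beta$ and bounds the remaining contributions by $C_M$ times exponentially small factors; a careful bookkeeping of absolute values, together with the observation that all error terms inherit the exponential smallness of $\beta$ whenever they are non-trivial in the expansion, yields the quantitative bound $|\det A(x)|\geq c_M>0$ on $[0,X_M]$, with $c_M$ depending only on $M$ and the soliton data $(z_1,c_1,\lambda_1)$. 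Combined with the asymptotic regime this completes the proof of (\ref{e lower bound for det A}).
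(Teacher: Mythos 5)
Your structural formula and the large-$x$ asymptotics are both sound (and the identity $\alpha(x)/(4z_1)=-\overline{\beta(x)}$ is exactly the right observation), but the third step --- the uniform lower bound on the compact interval $[0,X_M]$ --- contains a genuine gap. On $[0,X_M]$ there is no small parameter: the factors $e^{-2\im(z_1)x}$ are of order one there, and the perturbations you propose to control, namely $m^{(0)}(z_1;\cdot)-1$, $m^{(0)}(\overline{z}_1;\cdot)-1$ and $w^{(0)}$, are only bounded in $L^{\infty}$ by a constant $C_M$ which may be large. So when you "expand in powers of $\beta$ and bound the remaining contributions by $C_M$ times exponentially small factors," those remaining contributions are in fact of size $C_M\cdot O(1)$ and can swamp the leading term $1-\alpha\beta$, which is only bounded below by a quantity of order $\im(z_1)/|z_1|$. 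A perturbative bookkeeping of absolute values cannot close here; one needs an algebraic reason why $\det A$ cannot degenerate for intermediate $x$, and your formula $\det A=\det[W\overline{u},\tilde u]$, which expresses both $u$ and $\tilde u$ through the columns of $m^{(0)}(\overline{z}_1;x)$ alone, does not by itself supply one.

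The paper's proof avoids the split into regimes entirely. Using the symmetry (\ref{e symmetrie of m}) it expresses the \emph{second} column of $A$ as an explicit matrix (built from $w^{(0)}$ and $\overline{z}_1$) applied to the complex conjugate of the \emph{first} column, which yields the exact identity
\begin{equation*}
\det(A(x))=|a_{11}(x)|^2+\frac{1}{4\overline{z}_1}\,\bigl|\overline{w^{(0)}}(x)a_{11}(x)+a_{21}(x)\bigr|^2 .
\end{equation*}
Since $1/(4\overline{z}_1)\notin\Real$, the right-hand side vanishes only if $a_{11}(x)=a_{21}(x)=0$, which would put a nonzero vector in the kernel of $m^{(0)}(z_1;x)$ and contradict $\det m^{(0)}\equiv 1$; this gives invertibility for \emph{all} $x\in\Real$, which your argument also does not address. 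For the quantitative bound, the identity shows that $|\det A(x)|<d$ forces both $|a_{11}(x)|$ and $|a_{21}(x)|$ to be $O(\sqrt{d})$ (using $w^{(0)}\in L^{\infty}$), and then
\begin{equation*}
1=\bigl|\det m^{(0)}(z_1;x)\bigr|=\bigl|a_{11}(x)[m^{(0)}(z_1;x)]_{22}-a_{21}(x)[m^{(0)}(z_1;x)]_{12}\bigr|\leq d\,C_M,
\end{equation*}
where the last inequality uses the uniform $L^{\infty}$ control of $m^{(0)}(z_1;\cdot)$ from Lemma \ref{l m(z0)-1 in H^11 and H^2}. This gives $|\det A(x)|\geq C_M^{-1}$ for all $x>0$ in one stroke. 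I would suggest replacing your perturbative step by this conjugation identity; your asymptotic analysis then becomes unnecessary.
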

\begin{proof}
    Using the symmetry (\ref{e symmetrie of m}) we find
    \begin{eqnarray*}
       \left(
          \begin{array}{c}
            a_{12}(x) \\
            a_{22}(x) \\
          \end{array}
       \right)
       &=&
       \left[
         \begin{array}{cc}
           w^{(0)}(x) & 1 \\
           -|w^{(0)}(x)|^2-4\overline{z}_1 & -\overline{w^{(0)}}(x) \\
         \end{array}
       \right]
       \overline{m^{(0)}}(z_1;x)
       \left[
          \begin{array}{cc}
            0 & \frac{-1}{4\overline{z}_1} \\
            1 & 0 \\
          \end{array}
       \right]
       \left(
          \begin{array}{c}
            \frac{\overline{c}_1 e^{-2\ii \overline{z}_1 x}} {2\ii\overline{\lambda}_1(\overline{z}_1-z_1)} \\
            1 \\
          \end{array}
       \right)
       \\
       &=&
       \frac{1}{4\overline{z}_1}
       \left[
         \begin{array}{cc}
           -w^{(0)}(x) & -1 \\
           |w^{(0)}(x)|^2+4\overline{z}_1 & \overline{w^{(0)}}(x) \\
         \end{array}
       \right]
       \overline{m^{(0)}}(z_1;x)
       \left(
          \begin{array}{c}
            1\\
            \frac{2\ii\overline{\lambda}_1\overline{c}_1 e^{-2\ii \overline{z}_1 x}} {(\overline{z}_1-z_1)} \\
          \end{array}
       \right)
       \\
       &=&
       \frac{1}{4\overline{z}_1}
       \left[
         \begin{array}{cc}
           -w^{(0)}(x) & -1 \\
           |w^{(0)}(x)|^2+4\overline{z}_1 & \overline{w^{(0)}}(x) \\
         \end{array}
       \right]
       \left(
          \begin{array}{c}
            \overline{a_{11}}(x) \\
            \overline{a_{21}}(x) \\
          \end{array}
       \right).
    \end{eqnarray*}
    It follows directly that
    \begin{equation*}
        \det(A(x))=|a_{11}(x)|^2 +\frac{1}{4\overline{z}_1} |\overline{w^{(0)}}(x)a_{11}(x)+a_{21}(x) |^2.
    \end{equation*}
    The case $\det(A(x))=0$ is impossible, since due to $\im(z_1)\neq 0$ it would follow that $a_{11}(x)=a_{21}(x)=0$ and hence $\left(1,
    - \frac{2\ii\lambda_1c_1 e^{2\ii z_1x}}{z_1-\overline{z}_1}\right)^T \in\ker[m^{(0)}]$. This contradicts $\det(m^{(0)}(z;x))\equiv 1$ (see Remark \ref{r uniqueness + det=1}). Now we turn to the proof of (\ref{e lower bound for det A}). For sake of contradiction we assume that for any $d>0$ we can find $x>0$ such that $|\det(A(x))|<d$. Due to $\im(z_1)\neq 0$ and $w\in L^{\infty}$ wa can assume w.l.o.g. $|a_{11}(x)|< d$ and $|a_{12}(x)|<d$. Using (\ref{e det m=1}) we find
    \begin{eqnarray*}
      1 &=& \left|[m^{(0)}(z_1;x)]_{11} [m^{(0)}(z_1;x)]_{22}- [m^{(0)}(z_1;x)]_{12} [m^{(0)}(z_1;x)]_{21}\right| \\
       &=& \left|\left\{a_{11}(x) +\frac{2\ii\lambda_1c_1 e^{2\ii z_1x}} {z_1-\overline{z}_1}[m^{(0)}(z_1;x)]_{12}\right\} [m^{(0)}(z_1;x)]_{22}\right.\\
         &&\,
         \left.-\,[m^{(0)}(z_1;x)]_{12} \left\{ a_{21}(x)+ \frac{2\ii\lambda_1c_1 e^{2\ii z_1x}} {z_1-\overline{z}_1}[m^{(0)}(z_1;x)]_{22}\right\} \right|\\
       &=& \left|a_{11}(x) [m^{(0)}(z_1;x)]_{22}- [m^{(0)}(z_1;x)]_{12} a_{21}(x)\right|\\
       &<&d\cdot   \left\{\left|[m^{(0)}(z_1;x)]_{22}\right|+ \left|[m^{(0)}(z_1;x)]_{12}\right|\right\}\\
       &\leq&d\cdot C\cdot  \|m^{(0)}(z_1;\cdot)-1\|_{H^1(\Real_+)\cap L^{2,1}(\Real_+)}\\
       &\leq& d\cdot C_M.
    \end{eqnarray*}
    Here $C_M$ is the constant in Lemma \ref{l m(z0)-1 in H^11 and H^2} and it follows that $d$ cannot be arbitrary small. In addition we also proved the bound (\ref{e lower bound for det A}).
\end{proof}
\begin{lem}\label{l solvability of RHP N=1}
    For any scattering data $\mathcal{S}^{(1)}=\{r^{(1)}_{\pm};z_1;c_1\}$ such that $r^{(1)}_{\pm}\in L^{2,1}\cap H^1$ satisfies (\ref{e relation r+ r-})-(\ref{e r constraint}), \rh \ref{rhp m^1} admits an unique solution $m^{(1)}(z;x)$. This solution can be obtained from $m^{(0)}(z;x)$ by the following:
    \begin{equation}\label{e Bäcklund for m^1}
        m^{(1)}(z;x)=A(x)\mu(z) A^{-1}(x)m^{(0)}(z;x)\mu^{-1}(z),
    \end{equation}
    where
    \begin{equation*}
        \mu(z)=
        \left[
          \begin{array}{cc}
            z-z_1 & 0 \\
            0 & z-\overline{z}_1 \\
          \end{array}
        \right].
    \end{equation*}
\end{lem}
\begin{proof}
    Let us denote by $\widetilde{m}(z;x)$ the right hand side of (\ref{e Bäcklund for m^1}) and set
    \begin{equation*}
        \left[
          \begin{array}{cc}
            \tau_{11}(z) & \tau_{12}(z) \\
            \tau_{21}(z) & \tau_{22}(z) \\
          \end{array}
        \right]:=A^{-1}(x)m^{(0)}(z;x).
    \end{equation*}
    We find
    \begin{equation}\label{e Res of m tilde}
        \begin{aligned}
          \res_{z=z_1}\widetilde{m}(z;x)&=
          A(x)
          \left[
            \begin{array}{cc}
              0 & 0 \\
              (z_1-\overline{z}_1)\tau_{21}(z_1) & 0 \\
            \end{array}
          \right]
          ,\\
          \res_{z=\overline{z}_1}\widetilde{m} (z;x)&=
          A(x)
          \left[
            \begin{array}{cc}
              0 & (\overline{z}_1-z_1) \tau_{12}(\overline{z}_1) \\
              0 & 0 \\
            \end{array}
          \right]
          ,
        \end{aligned}
    \end{equation}
    and
    \begin{equation}\label{e lim m tilde c}
        \begin{aligned}
        &\lim_{z\to z_1}\widetilde{m}(z;x)
          \left(
            \begin{array}{cc}
              0 & 0 \\
              2\ii\lambda_1c_1 e^{2\ii z_1x} & 0
            \end{array}
          \right)=
          A(x)
          \left[
            \begin{array}{cc}
              0 & 0 \\
              2\ii\lambda_1c_1 e^{2\ii z_1x}\tau_{22}(z_1) & 0 \\
            \end{array}
          \right]
          ,\\
          &\lim_{z\to \overline{z}_1}\widetilde{m}(z;x)
          \left(
            \begin{array}{cc}
              0 & \frac{-\overline{c}_1}{2\ii\lambda_1} e^{-2\ii \overline{z}_1x} \\
              0 & 0
            \end{array}
          \right)=
          A(x)
          \left[
            \begin{array}{cc}
              0 &\frac{-\overline{c}_1}{2\ii\lambda_1} \tau_{11}(\overline{z}_1) \\
              0 & 0 \\
            \end{array}
          \right].
        \end{aligned}
      \end{equation}
      Using $\det m^{(0)}\equiv 1 $ it is easy to obtain
      \begin{equation*}
        \tau_{21}(z_1)=\frac{1}{\det A(x)} \frac{2\ii\lambda_1c_1 e^{2\ii z_1x}}{z_1-\overline{z}_1},\quad
        \tau_{22}(z_1)=\frac{1}{\det A(x)},
      \end{equation*}
      and
      \begin{equation*}
        \tau_{11}(\overline{z}_1)=\frac{1}{\det A(x)},\quad\tau_{12}(\overline{z}_1)= \frac{-1}{\det A(x)} \frac{\overline{c}_1 e^{-2\ii \overline{z}_1x}}{2\ii\overline{\lambda}_1 (\overline{z}_1-z_1)},
      \end{equation*}
      and thus it follows from (\ref{e Res of m tilde}) and (\ref{e lim m tilde c}) that $\widetilde{m}$ satisfies (\ref{e Res m^1}). Now we proceed with the jump on the real axis and check if point (iii) of \rh \ref{rhp m^1} is satisfied. Using the jump condition of $m^{(0)}$ (see (\ref{e jump})) and the definition (\ref{e r^0}) of $r_{\pm}^{(0)}$ we find for $z\in\Real$
      \begin{eqnarray*}
        \widetilde{m}_+(z;x) &=& \widetilde{m}_-(z;x) \mu(z)\left(
          \begin{array}{cc}
            1+\overline{r}^{(0)}_+(z)r^{(0)}_-(z) & e^{-2\ii zx}\overline{r}^{(0)}_+(z) \\
            e^{2\ii zx}r^{(0)}_-(z) & 1 \\
          \end{array}
        \right)\mu^{-1}(z) \\
         &=&  \widetilde{m}_-(z;x)\left(
          \begin{array}{cc}
            1+\overline{r}^{(1)}_+(z)r^{(1)}_-(z) & e^{-2\ii zx}\overline{r}^{(1)}_+(z) \\
            e^{2\ii zx}r^{(1)}_-(z) & 1 \\
          \end{array}
        \right).
      \end{eqnarray*}
      Next we observe
      \begin{equation}\label{e expansion of m tilde}
        \widetilde{m}(z;x)= \left[1+\frac{A(x)\:\mu(0)\:A^{-1}(x)}{z}\right] m^{(0)}(z;x)
        \left[
          \begin{array}{cc}
            \frac{z}{z-z_1} & 0 \\
            0 & \frac{z}{z-\overline{z}_1} \\
          \end{array}
        \right].
      \end{equation}
      It follows that $\widetilde{m}$ behaves for $|z|\to\infty$ as required in the point (ii) of \rh \ref{rhp m^1}. Since also the point (i) of \rh \ref{rhp m^1} is true, we conclude by the uniqueness (see Remark \ref{r uniqueness + det=1}) that $m^{(1)}(z;x)\equiv\widetilde{m}(z;x)$.
\end{proof}
The B\"{a}cklund transformation formula (\ref{e Bäcklund for m^1}) is an ideal expression to extend Corollary \ref{c u in H^11 and H^2 (positive half line)} and Lemma \ref{l m(z0)-1 in H^11 and H^2} to the case where the scattering data are involving one pole $z_1$.
\begin{cor}\label{c u in H^11 and H^2 (positive half line - 1 pole)}
   Under the assumptions of Lemma \ref{l solvability of RHP N=1} the potential $u^{(1)}(x)$ reconstructed from the solution $m^{(1)}(z;x)$ of \rh \ref{rhp m dynamic} by using (\ref{e rec 1}) and (\ref{e rec 2}) lies in $H^2(\Real_+)\cap H^{1,1}(\Real_+)$. Moreover, if $\|r^{(1)}_+\|_{H^1\cap L^{2,1}}+\|r^{(1)}_-\|_{H^1\cap L^{2,1}}+|c_1|\leq M$ for some fixed $M>0$, then $u^{(1)}$ satisfies the bound
   \begin{equation}\label{e u^1 in H^11 and H^2 (positive half line)}
       \|u^{(1)}\|_{H^2(\Real_+)\cap H^{1,1}(\Real_+)}\leq C_M
   \end{equation}
   where the constant $C_M$ depends on $M$ and $z_1$ but not on $r^{(1)}_{\pm}$ and $|c_1|$.
\end{cor}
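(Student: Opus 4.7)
The plan is to read off $u^{(1)}$ from the factorisation (\ref{e expansion of m tilde}) and to inherit the regularity from Corollary \ref{c u in H^11 and H^2 (positive half line)} for the radiation part of $m^{(0)}$ and from Lemma \ref{l m(z0)-1 in H^11 and H^2} for the values $m^{(0)}(z_1;x)$ and $m^{(0)}(\overline z_1;x)$ entering $A(x)$. Denoting by $w^{(j)}$, $v^{(j)}$ the quantities associated with $u^{(j)}$ via (\ref{e def w})--(\ref{e def v}), I would expand $z/(z-z_1)=1+z_1/z+O(z^{-2})$ and $m^{(0)}(z;x)=1+m^{(0)}_1(x)/z+O(z^{-2})$ inside (\ref{e expansion of m tilde}). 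The $1/z$-coefficient of $m^{(1)}$ equals $A(x)\mu(0)A^{-1}(x)+m^{(0)}_1(x)+\mathrm{diag}(z_1,\overline z_1)$, and an elementary $2\times 2$ calculation gives $[A\mu(0)A^{-1}]_{12}=(z_1-\overline z_1)a_{11}a_{12}/\det A$ and $[A\mu(0)A^{-1}]_{21}=(\overline z_1-z_1)a_{21}a_{22}/\det A$. Inserted into (\ref{e rec 1})--(\ref{e rec 2}) this yields
\begin{equation*}
w^{(1)}(x)=w^{(0)}(x)-\frac{4(z_1-\overline z_1)\,a_{11}(x)a_{12}(x)}{\det A(x)}
\end{equation*}
together with an analogous identity relating $e^{-\frac{1}{2\ii}\int_{+\infty}^x|u^{(1)}|^2dy}\,v^{(1)}_x$ to $e^{-\frac{1}{2\ii}\int_{+\infty}^x|u^{(0)}|^2dy}\,v^{(0)}_x$ and $a_{21}a_{22}/\det A$.

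All the regularity work is then contained in the pole correction. Since $z_1$ lies in the first quadrant, the exponentials $e^{2\ii z_1 x}$ and $e^{-2\ii\overline z_1 x}$ that occur in the definitions of the $a_{ij}$ are Schwartz on $\Real_+$; together with Lemma \ref{l m(z0)-1 in H^11 and H^2} applied at $z_0=z_1$ and $z_0=\overline z_1$, this places $a_{11}-1,\,a_{22}-1,\,a_{12},\,a_{21}$ in $H^1(\Real_+)\cap L^{2,1}(\Real_+)$ with norms bounded in terms of $M$ and $z_1$ only. Because $H^1(\Real_+)$ is a Banach algebra in one dimension, the products $a_{11}a_{12}$ and $a_{21}a_{22}$ lie in the same space. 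The same exponential decay forces $\det A(x)\to 1$ as $x\to+\infty$, so $\det A-1\in H^1(\Real_+)\cap L^{2,1}(\Real_+)$; combined with the uniform lower bound $|\det A|^{-1}\le C_M$ of Proposition \ref{p A inverse}, this puts $1/\det A-1$ into $H^1(\Real_+)\cap L^{2,1}(\Real_+)$ as well. A final use of the algebra property then places $a_{11}a_{12}/\det A$ and $a_{21}a_{22}/\det A$ in $H^1(\Real_+)\cap L^{2,1}(\Real_+)$.

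Combining the two previous steps with $w^{(0)}$ and $e^{-\frac{1}{2\ii}\int_{+\infty}^x|u^{(0)}|^2dy}\,v^{(0)}_x$ in $H^1(\Real_+)\cap L^{2,1}(\Real_+)$ (extracted from the proof of Corollary \ref{c u in H^11 and H^2 (positive half line)}), I obtain $w^{(1)}$ and $e^{-\frac{1}{2\ii}\int_{+\infty}^x|u^{(1)}|^2dy}\,v^{(1)}_x$ in the same space. The pointwise inequalities (\ref{e relations u v w}) together with boundedness of the phases give $u^{(1)}\in H^{1,1}(\Real_+)$; differentiating the identity for $e^{-\frac{1}{2\ii}\int_{+\infty}^x|u^{(1)}|^2dy}\,v^{(1)}_x$ once more, exactly as at the end of the proof of Corollary \ref{c u in H^11 and H^2 (positive half line)}, produces $\partial_x^2 u^{(1)}\in L^2(\Real_+)$ and hence $u^{(1)}\in H^2(\Real_+)$. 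Tracking constants at every step delivers (\ref{e u^1 in H^11 and H^2 (positive half line)}). The chief technical obstacle is the step $\det A-1\in H^1(\Real_+)\cap L^{2,1}(\Real_+)$: it uses simultaneously the quantitative lower bound from Proposition \ref{p A inverse} and the Schwartz decay of $e^{\pm 2\ii z_1 x}$ on $\Real_+$; once this is settled, the rest of the proof reduces to algebra in $H^1(\Real_+)$ together with results already proved in Section \ref{s inverse sc}.
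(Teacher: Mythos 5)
Your proposal is correct and follows essentially the same route as the paper: expand the Bäcklund formula (\ref{e expansion of m tilde}) at $z=\infty$ to get $w^{(1)}=w^{(0)}+B_1$ with $B_1=-8\ii\im(z_1)a_{11}a_{12}/\det A$ (your $-4(z_1-\overline z_1)a_{11}a_{12}/\det A$ is the same quantity) and the analogous identity for $v^{(1)}_x$, then control the correction via Lemma \ref{l m(z0)-1 in H^11 and H^2}, the decay of $e^{\pm 2\ii z_1x}$ on $\Real_+$, the algebra property, and the lower bound on $|\det A|$ from Proposition \ref{p A inverse}. This matches the paper's proof of Corollary \ref{c u in H^11 and H^2 (positive half line - 1 pole)} in every essential step.
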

\begin{proof}
    We use (\ref{e expansion of m tilde}) and
    the expansion
    \begin{equation*}
        \left[
          \begin{array}{cc}
            \frac{z}{z-z_1} & 0 \\
            0 & \frac{z}{z-\overline{z}_1} \\
          \end{array}
        \right]=1-\frac{\mu(0)}{z}+\mathcal{O}(z^{-2}), \quad\text{as }|z|\to\infty
    \end{equation*}
    in order to find
    \begin{equation*}
        \begin{aligned}
            \lim_{|z|\to\infty}z\; \left[m^{(1)}(z;x)\right]_{12}& = \lim_{|z|\to\infty}z\; \left[m^{(0)}(z;x)\right]_{12}+ \left[A(x)\:\mu(0)\:A^{-1}(x)\right]_{12},\\
            \lim_{|z|\to\infty}z\; \left[m^{(1)}(z;x)\right]_{21}& = \lim_{|z|\to\infty}z\; \left[m^{(0)}(z;x)\right]_{21}+ \left[A(x)\:\mu(0)\:A^{-1}(x)\right]_{21}.
        \end{aligned}
    \end{equation*}
    Using the notation (\ref{e def w}) and (\ref{e def v}), we find by the reconstruction formulas (\ref{e rec 1}) and (\ref{e rec 2})
    \begin{equation}\label{e decomposition w}
       w^{(1)}(x)=w^{(0)}(x)+B_1(x),\qquad B_1(x):=-\frac{8\ii \im(z_1)a_{11}(x)a_{12}(x)}{\det(A(x))}
    \end{equation}
    and
    \begin{multline}\label{e decomposition v}
        \qquad e^{-\frac{1}{2\ii} \int_{+\infty}^x|u^{(1)}(y)|^2dy}v^{(1)}_x(x)= e^{-\frac{1}{2\ii} \int_{+\infty}^x|u^{(0)}(y)|^2dy}v^{(0)}_x(x) +B_2(x),\\
        B_2(x):=\frac{4 \im(z_1)a_{21}(x)a_{22}(x)}{\det(A(x))}.\qquad
    \end{multline}
    As it is easily to derive from the definition of $A(x)$ and Lemma \ref{l m(z0)-1 in H^11 and H^2}, we have $(A(\cdot)-1)\in L^{2,1}(\Real_+)\cap H^1(\Real_+)$ (note that $\im(z_1)>0$ is necessary). In addition, $(\det(A(\cdot))-1)\in L^{2,1}(\Real_+)\cap H^1(\Real_+)$. These two facts and \ref{e lower bound for det A} yield $B_j(\cdot)\in L^{2,1}(\Real_+)\cap H^1(\Real_+)$ for $j=1,2$. If we apply Corollary \ref{c u in H^11 and H^2 (positive half line)} to $v^{(0)}$ and $w^{(0)}$, we end up with $w^{(1)}\in H^{1,1}(\Real_+)$ and
    \begin{equation*}
        \partial_x \left(e^{-\frac{1}{2\ii} \int_{+\infty}^x|u^{(1)}(y)|^2dy } v^{(1)}_x(x)\right)\in L^2_x(\Real_+),
    \end{equation*}
    which is sufficient to conclude $u^{(1)}\in H^2(\Real_+)\cap H^{1,1}(\Real_+)$.
\end{proof}
\begin{cor}\label{c m^1(z0)-1 in H^11 and H^2}
   Let the assumptions of Lemma \ref{l solvability of RHP N=1} be valid and fix $z_2\in\Compl\setminus(\Real\cup \set{z_1,\overline{z}_1})$. Then for the solution $m^{(1)}(z;x)$ of \rh \ref{rhp m dynamic} we have $m^{(1)}(z_2;\cdot)-1\in H^1(\Real_+)\cap L^{2,1}(\Real_+)$. Moreover, if $\|r^{(1)}_+\|_{H^1\cap L^{2,1}}+\|r^{(1)}_-\|_{H^1\cap L^{2,1}}\leq M$ for some fixed $M>0$, then we also have the bound
   \begin{equation}\label{e m^1(z0)-1 in H^1 and L^2,1}
       \|m^{(1)}(z_2;\cdot)-1\|_{H^1(\Real_+)\cap L^{2,1}(\Real_+)}\leq C_M,
   \end{equation}
   where the constant $C_M>0$ depends on $M$, $z_1$, $z_2$ and $|c_1|$ but not on $r^{(1)}_{\pm}$.
\end{cor}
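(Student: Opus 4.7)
The plan is to deduce the estimate directly from the B\"{a}cklund representation (\ref{e B�cklund for m^1}). Since $z_2\notin\{z_1,\overline{z}_1\}$, the matrices $\mu(z_2)$ and $\mu^{-1}(z_2)$ are constant and invertible, so (\ref{e B�cklund for m^1}) evaluated at $z=z_2$ rearranges as
\begin{equation*}
m^{(1)}(z_2;x)-1 \;=\; \bigl[A(x)\mu(z_2)A^{-1}(x)-\mu(z_2)\bigr]\,m^{(0)}(z_2;x)\,\mu^{-1}(z_2) \;+\; \mu(z_2)\bigl[m^{(0)}(z_2;x)-1\bigr]\mu^{-1}(z_2).
\end{equation*}
The second summand lies in $H^1(\Real_+)\cap L^{2,1}(\Real_+)$ with the required bound directly from Lemma~\ref{l m(z0)-1 in H^11 and H^2} applied at $z_0=z_2$ to the pure-radiation data $r^{(0)}_{\pm}$, whose $H^1\cap L^{2,1}$ norms are bounded by $M$ and $z_1$ through the elementary transformation (\ref{e r^0}). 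For the first summand, I would exploit the identity
\begin{equation*}
A\mu(z_2)A^{-1}-\mu(z_2) \;=\; (A-1)\mu(z_2)A^{-1} \;+\; \mu(z_2)(A^{-1}-1)
\end{equation*}
to reduce the problem to showing that both $A-1$ and $A^{-1}-1$ lie in $H^1(\Real_+)\cap L^{2,1}(\Real_+)$, since the remaining factors $A^{-1}$, $m^{(0)}(z_2;\cdot)$, $\mu^{\pm 1}(z_2)$ are all bounded in $L^\infty(\Real_+)$ (via Proposition~\ref{p A inverse} and the Sobolev embedding $H^1\hookrightarrow L^\infty$), and $H^1\cap L^{2,1}$ is stable under multiplication by bounded functions (trivially for $L^{2,1}$, and for $H^1$ via the product rule together with the same Sobolev embedding).

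That $A-1\in H^1(\Real_+)\cap L^{2,1}(\Real_+)$ is read off from the defining formula for $A$: each entry is a linear combination of entries of $m^{(0)}(z_1;x)$ (which are handled by Lemma~\ref{l m(z0)-1 in H^11 and H^2} applied at $z_0=z_1$) and of the exponentials $e^{\pm 2iz_1 x}$, $e^{\mp 2i\overline{z}_1 x}$. Because $\im(z_1)>0$, those exponentials decay like $e^{-2\,\im(z_1)x}$ on $\Real_+$ and therefore already lie in $H^1(\Real_+)\cap L^{2,1}(\Real_+)$ with norm controlled by $z_1$ alone; combined with the algebra property this yields $\|A-1\|_{H^1\cap L^{2,1}}\leq C(M,z_1,|c_1|)$. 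For $A^{-1}-1$ I would invoke Cramer's rule $A^{-1}=\det(A)^{-1}\,\mathrm{adj}(A)$: both $\mathrm{adj}(A)-1$ and $\det(A)-1$ are polynomial expressions in the entries of $A-1$ and hence inherit membership in $H^1(\Real_+)\cap L^{2,1}(\Real_+)$, while the uniform lower bound $|\det A(x)|^{-1}\leq C_M$ is precisely the content of Proposition~\ref{p A inverse}. Writing
\begin{equation*}
A^{-1}-1 \;=\; \det(A)^{-1}\bigl[(\mathrm{adj}(A)-1)+(1-\det A)\bigr]
\end{equation*}
and applying the algebra property once more closes this point.

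The main obstacle — really the only non-trivial one — is controlling $|\det A(x)|^{-1}$ uniformly in $r^{(1)}_\pm$ and in $x>0$, and this has already been dealt with in Proposition~\ref{p A inverse}. What remains is bookkeeping of constants: since every factor appearing in the two decompositions above has a norm depending only on $M$, $z_1$, $z_2$, and $|c_1|$, the required estimate (\ref{e m^1(z0)-1 in H^1 and L^2,1}) follows at once.
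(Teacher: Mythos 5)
Your proof is correct and takes essentially the same route as the paper: both start from the B\"{a}cklund formula of Lemma \ref{l solvability of RHP N=1} evaluated at $z_2$, reduce everything to the facts that $m^{(0)}(z_2;\cdot)-1$, $A-1$ and $\det A-1$ lie in $H^1(\Real_+)\cap L^{2,1}(\Real_+)$ (via Lemma \ref{l m(z0)-1 in H^11 and H^2}, the decay of $e^{2\ii z_1x}$ on $\Real_+$, and the lower bound of Proposition \ref{p A inverse}), and then invoke the one-dimensional algebra property of $H^1\cap L^{2,1}$; the paper merely writes the correction term in explicit $2\times2$ component form instead of your splitting $(A-1)\mu A^{-1}+\mu(A^{-1}-1)$. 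One wording caveat: multiplication by a merely bounded function does not preserve $H^1$, but since your multipliers $A^{-1}$ and $m^{(0)}(z_2;\cdot)$ are of the form $1$ plus an $H^1\cap L^{2,1}$ function, the product rule you cite does legitimately apply.
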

\begin{proof}
    (\ref{e Bäcklund for m^1}) can be written as
    \begin{multline*}
        m^{(1)}(z_2;x)=m^{(0)}(z_2;x)\\-2\ii\im(z_1) A(x)
        \left[
          \begin{array}{cc}
            0 & \frac{a_{21}(x)[m(z_2;x)]_{11}- a_{11}(x)[m(z_2;x)]_{21}} {(z_2-z_1)\det(A(x))} \\
            \frac{a_{22}(x)[m(z_2;x)]_{12}- a_{12}(x)[m(z_2;x)]_{22}} {(z_2-\overline{z}_1)\det(A(x))} & 0 \\
          \end{array}
        \right].
    \end{multline*}
    $m^{(1)}(z_2;\cdot)-1\in H^1(\Real_+)\cap L^{2,1}(\Real_+)$ is now a direct consequence of $m^{(0)}(z_2;\cdot)-1\in H^1(\Real_+)\cap L^{2,1}(\Real_+)$ (see Lemma \ref{l m(z0)-1 in H^11 and H^2}), $(A(\cdot)-1)\in L^{2,1}(\Real_+)\cap H^1(\Real_+)$, $(\det(A(\cdot))-1)\in L^{2,1}(\Real_+)\cap H^1(\Real_+)$ and (\ref{e lower bound for det A}).
\end{proof} 
\subsection{B\"{a}cklund transformation for $x<0$}
We consider the solution $m^{(1)}(z;x)$ of \rh \ref{rhp m^1} provided by Lemma \ref{l solvability of RHP N=1} and define
\begin{equation}\label{e def m^1 delta}
    m^{(1)}_{\delta}(z;x):=m^{(1)}(z;x)
    \left[
      \begin{array}{cc}
        \frac{z-z_1}{z-\overline{z}_1} & 0 \\
        0 & \frac{z-\overline{z}_1}{z-z_1} \\
      \end{array}
    \right]
    \left[
      \begin{array}{cc}
        \delta^{-1}(z) & 0 \\
        0 & \delta(z) \\
      \end{array}
    \right].
\end{equation}
The factor $\left(\frac{z-z_1}{z-\overline{z}_1}\right)^{\sigma_1}$ swaps the columns where the poles arise. The second factor $\delta^{-\sigma_1}$ has influence on the structure of the jump matrix. It can be shown by elementary calculations that (\ref{e def m^1 delta}) yields a solution of the following \rh.
\begin{samepage}
\begin{framed}
    \begin{rhp}\label{rhp m^1 delta}
        Find for each $x\in\Real$ a $2\times 2$-matrix valued function $\Compl\ni z\mapsto m_{\delta}^{(1)}(z;x)$ which satisfies
        \begin{enumerate}[(i)]
          \item $m_{\delta}^{(1)}(z;x)$ is meromorphic in $\Compl\setminus\Real$ (with respect to the parameter $z$).
          \item $m_{\delta}^{(1)}(z;x)=1+\mathcal{O}\left(\frac{1}{z}\right)$ as $|z|\to\infty$.
          \item The non-tangential boundary values $m^{(1)}_{\pm,\delta}(z;x)$ exist for $z\in\Real$ and satisfy the jump relation
              \begin{equation}\label{e jump m^1 delta}
                  m^{(1)}_{+,\delta}=m^{(1)}_{-,\delta} (1+R^{(1)}_{\delta}), \quad\text{where}\quad
                  R^{(1)}_{\delta}(z;x):=
                  \left[
                    \begin{array}{cc}
                       0 & e^{-2\ii zx}\overline{r}^{(1)}_{+,\delta}(z) \\
                       e^{2\ii zx}r^{(1)}_{-,\delta}(z) & \overline{r}^{(1)}_{+,\delta}(z) r_{-,\delta}^{(1)}(z) \\
                    \end{array}
                  \right],
              \end{equation}
              and $r^{(1)}_{\pm,\delta}(z):=r^{(1)}_{\pm} (z)\overline{\delta}_+(z) \overline{\delta}_-(z)\left( \frac{z-z_1}{z-\overline{z}_1}\right)^2$.
          \item $m_{\delta}^{(1)}$ has simple poles at $z_1$ and $\overline{z}_1$ with
              \begin{equation*}\label{e Res m^1 delta}
                  \begin{aligned}
                     \res_{z=z_1} m_{\delta}^{(1)}(z;x)&=\lim_{z\to z_1}m_{\delta}^{(1)}(z;x)
                  \left[
                      \begin{array}{cc}
                        0 & \frac{-e^{-2\ii z_1x} [2 \im(z_1)]^2} {\delta^{-2}(z_1)2\ii\lambda_1c_1} \\
                        0 & 0
                      \end{array}
                  \right],\\
                     \res_{z=\overline{z}_1} m_{\delta}^{(1)}(z;x)&=\lim_{z\to \overline{z}_1}m_{\delta}^{(1)}(z;x)
                  \left[
                      \begin{array}{cc}
                      0 & 0 \\
                      \frac{2\ii\lambda_1[2 \im(z_1)]^2e^{2\ii \overline{z}_1x}} {\delta^{2}(\overline{z}_1) \overline{c}_1} & 0
                      \end{array}
                  \right].
                  \end{aligned}
              \end{equation*}
        \end{enumerate}
    \end{rhp}
\end{framed}
\end{samepage}
Analogously to the previous subsection we set
\begin{equation*}
    r^{(0)}_{\pm,\delta}(z):= r^{(1 )}_{\pm,\delta}(z) \frac{z-\overline{z}_1}{z-z_1}= r^{(1)}_{\pm} (z)\overline{\delta}_+(z) \overline{\delta}_-(z) \frac{z-z_1}{z-\overline{z}_1},
\end{equation*}
and define $m^{(0)}_{\delta}(z;x)$ to be the unique solution
of \rh\ref{rhp m^0 delta} with data $\mathcal{S}^{(0)}_{\delta}:=\set{r^{(0)}_{\pm,\delta}}$. We have $r^{(0)}_{\pm,\delta}\in L^{2,1}\cap H^1$ and hence, the statements of Lemma \ref{l m(z0)-1 in in H^1 and L^2,1 (delta)} and Corollary \ref{c u in H^11 and H^2 (negative half line)} are available. Next we want to describe how the solutions $m^{(1)}_{\delta}(z;x)$ and $m^{(0)}_{\delta}(z;x)$ are connected by a B\"{a}cklund transformation of the form (\ref{e Bäcklund for m^1}). For this purpose we define
\begin{align*}
        &\left(
          \begin{array}{c}
            \vspace{1mm} a^{(\delta)}_{11}(x) \\
            a^{(\delta)}_{21}(x) \\
          \end{array}
        \right)
        :=m^{(0)}_{\delta}(\overline{z}_1;x)
        \left(
          \begin{array}{c}
            1 \\
            \frac{2\ii\lambda_1[2 \im(z_1)]^2e^{2\ii \overline{z}_1x}} {\delta^{2}(\overline{z}_1) \overline{c}_1} \\
          \end{array}
        \right),\\
        &\left(
          \begin{array}{c}
            \vspace{1mm} a_{12}^{(\delta)}(x) \\
            a_{22}^{(\delta)}(x) \\
          \end{array}
        \right)
        :=m^{(0)}_{\delta}(z_1;x)
        \left(
          \begin{array}{c}
            \frac{-e^{-2\ii z_1x} [2 \im(z_1)]^2} {\delta^{-2}(z_1)2\ii\lambda_1c_1} \\
            1 \\
          \end{array}
        \right),
\end{align*}
and $A^{(\delta)}(x)=
    \left[
      \begin{array}{cc}
        a^{(\delta)}_{11}(x) & a^{(\delta)}_{12}(x) \\
        a^{(\delta)}_{21}(x) & a^{(\delta)}_{22}(x) \\
      \end{array}
    \right]$. It turns out that
\begin{equation}\label{e Bäcklund for m^1 delta}
    m^{(1)}_{\delta}(z;x)=A^{(\delta)}(x)
    \left[
      \begin{array}{cc}
        z-\overline{z}_1 & 0 \\
        0 & z-z_1 \\
      \end{array}
    \right]
    \left[A^{(\delta)}(x)\right]^{-1}m^{(0)}_{\delta}(z;x)
    \left[
      \begin{array}{cc}
        \frac{1}{z-\overline{z}_1} & 0 \\
        0 & \frac{1}{z-z_1} \\
      \end{array}
    \right].
\end{equation}
Due to $\im(z_1)>0$ we have $e^{-2\ii z_1x}\in H^{1}_x(\Real_-)\cap L^{2,1}_x(\Real_-)$. Additionally, $(m^{(0)}_{\delta}(z_1;\cdot)-1)\in H^1(\Real_-)\cap L^{2,1}(\Real_-)$ and thus we find $(A^{(\delta)}(\cdot)-1)\in H^1(\Real_-)\cap L^{2,1}(\Real_-)$. These observation bring us in the position to extend the results of the previous subsection to the negative half-line.
\begin{cor}\label{c m^1(z0)-1 in L^21 and H^1 (delta)}
   Let the assumptions of Lemma \ref{l solvability of RHP N=1} be valid and fix $z_2\in\Compl\setminus(\Real\cup \set{z_1,\overline{z}_1})$. Then for the solution $m_{\delta}^{(1)}(z;x)$ of \rh\ref{rhp m^1 delta} we have $m_{\delta}^{(1)}(z_2;\cdot)-1\in H^1(\Real_-)\cap L^{2,1}(\Real_-)$. Moreover, if $\|r^{(1)}_+\|_{H^1\cap L^{2,1}}+\|r^{(1)}_-\|_{H^1\cap L^{2,1}}+|c_1|\leq M$ for some fixed $M>0$, then we also have the bound
   \begin{equation}\label{e m^1_delta(z0)-1 in H^1 and L^2,1}
       \|m_{\delta}^{(1)}(z_2;\cdot)-1\|_{H^1(\Real_-)\cap L^{2,1}(\Real_-)}\leq C_M,
   \end{equation}
   where the constant $C_M>0$ depends on $M$, $z_1$ and $z_2$ but not on $r^{(1)}_{\pm}$ and $|c_1|$.
\end{cor}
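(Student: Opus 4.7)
The plan is to mirror the proof of Corollary \ref{c m^1(z0)-1 in H^11 and H^2} step by step, with the Bäcklund identity \eqref{e B�cklund for m^1} replaced by its $\delta$-modified analogue \eqref{e B�cklund for m^1 delta}, and Lemma \ref{l m(z0)-1 in H^11 and H^2} replaced by Lemma \ref{l m(z0)-1 in in H^1 and L^2,1 (delta)}. All the structural ingredients are already at hand on $\Real_-$.

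First I would expand \eqref{e B�cklund for m^1 delta} to isolate the difference $m^{(1)}_{\delta}(z_2;x) - m^{(0)}_{\delta}(z_2;x)$. Using $\det m^{(0)}_{\delta}(z;x) \equiv 1$ (from the Liouville argument of Remark \ref{r uniqueness + det=1}) to invert $A^{(\delta)}(x)$ explicitly in terms of entries of $m^{(0)}_{\delta}(z_1;x)$ and $m^{(0)}_{\delta}(\overline{z}_1;x)$, and splitting the diagonal factors in \eqref{e B�cklund for m^1 delta} as $1 + \mathcal{O}(1/(z_2 - z_1))$, one arrives, exactly as in the proof of Corollary \ref{c m^1(z0)-1 in H^11 and H^2}, at an additive decomposition of $m^{(1)}_{\delta}(z_2;x) - m^{(0)}_{\delta}(z_2;x)$ as a finite sum of products involving entries of $A^{(\delta)}(x)$, entries of $m^{(0)}_{\delta}(z_2;x)$, the scalar $[\det A^{(\delta)}(x)]^{-1}$, and bounded rational prefactors in $z_2$ (finite by the hypothesis $z_2 \notin \Real \cup \set{z_1,\overline{z}_1}$).

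The main technical obstacle is to secure the uniform lower bound $|\det A^{(\delta)}(x)| \geq c_M > 0$ for $x \in \Real_-$, i.e.\ the $\delta$-analogue of Proposition \ref{p A inverse}. I would adapt that argument: derive a symmetry for $m^{(0)}_{\delta}$ from \eqref{e symmetrie of m} together with $\delta(\overline{z}) = \overline{\delta}^{-1}(z)$, use it to rewrite $\det A^{(\delta)}(x)$ as a manifestly nonvanishing sum (exploiting $\im(z_1) > 0$ and $\det m^{(0)}_{\delta} \equiv 1$), and then run the contradiction step of Proposition \ref{p A inverse} verbatim, invoking Lemma \ref{l m(z0)-1 in in H^1 and L^2,1 (delta)} in place of Lemma \ref{l m(z0)-1 in H^11 and H^2} to control $\|m^{(0)}_{\delta}(z_1;\cdot) - 1\|_{H^1 \cap L^{2,1}(\Real_-)}$ uniformly.

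Once the lower bound on $|\det A^{(\delta)}|$ is in hand, the proof closes quickly. The inclusion $(A^{(\delta)}(\cdot) - 1) \in H^1(\Real_-) \cap L^{2,1}(\Real_-)$ was already noted in the paragraph preceding the corollary (it follows from Lemma \ref{l m(z0)-1 in in H^1 and L^2,1 (delta)} and the decay of $e^{\pm 2\ii z_1 x}$ on $\Real_-$), so by the Banach algebra property of $H^1 \cap L^{2,1}$ we get $(\det A^{(\delta)}(\cdot) - \mathrm{const}) \in H^1(\Real_-) \cap L^{2,1}(\Real_-)$ and, combined with the lower bound, also $(1/\det A^{(\delta)}(\cdot) - \mathrm{const}) \in H^1(\Real_-) \cap L^{2,1}(\Real_-)$. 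Plugging these together with Lemma \ref{l m(z0)-1 in in H^1 and L^2,1 (delta)} applied at $z_2$ into the additive decomposition, and using the Banach algebra property once more, immediately yields $m^{(1)}_{\delta}(z_2;\cdot) - 1 \in H^1(\Real_-) \cap L^{2,1}(\Real_-)$ together with the quantitative bound \eqref{e m^1_delta(z0)-1 in H^1 and L^2,1}, the constant depending only on $M$, $z_1$ and $z_2$.
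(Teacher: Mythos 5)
Your proposal is correct and follows exactly the route the paper intends: the paper states this corollary without proof, relying on the preceding observations that $(A^{(\delta)}(\cdot)-1)\in H^1(\Real_-)\cap L^{2,1}(\Real_-)$, the B\"{a}cklund formula (\ref{e B�cklund for m^1 delta}), and Lemma \ref{l m(z0)-1 in in H^1 and L^2,1 (delta)}, in direct analogy with the proof of Corollary \ref{c m^1(z0)-1 in H^11 and H^2}. You correctly identify, and propose to fill, the one step the paper leaves entirely implicit, namely the uniform lower bound on $|\det A^{(\delta)}(x)|$ for $x<0$ (the $\delta$-analogue of Proposition \ref{p A inverse}), which is indeed needed and is obtained exactly as you describe.
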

\begin{cor}\label{c u in H^11 and H^2 (negative half line - 1 pole)}
   Under the assumptions of Lemma \ref{l solvability of RHP N=1} the potential $u_{\delta}^{(1)}(x)$ reconstructed from the solution $m_{\delta}^{(1)}(z;x)$ of \rh\ref{rhp m^1 delta} by using (\ref{e rec 1}) and (\ref{e rec 2}) lies in $H^2(\Real_-)\cap H^{1,1}(\Real_-)$. Moreover, if $\|r^{(1)}_+\|_{H^1\cap L^{2,1}}+\|r^{(1)}_-\|_{H^1\cap L^{2,1}}+|c_1|\leq M$ for some fixed $M>0$, then we also have the bound
   \begin{equation}\label{e u^1 in H^11 and H^2 (negative half line)}
       \|u_{\delta}^{(1)}\|_{H^2(\Real_-)\cap H^{1,1}(\Real_-)}\leq C_M
   \end{equation}
   where the constant $C_M>0$ depends on $M$ and $z_1$, but not on $r^{(1)}_{\pm}$ and $|c_1|$.
\end{cor}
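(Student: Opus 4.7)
The plan is to mirror the strategy of Corollary \ref{c u in H^11 and H^2 (positive half line - 1 pole)} verbatim, with the Bäcklund transformation (\ref{e B�cklund for m^1 delta}) replacing (\ref{e B�cklund for m^1}) and Corollary \ref{c u in H^11 and H^2 (negative half line)} taking the role of Corollary \ref{c u in H^11 and H^2 (positive half line)}. First I would expand (\ref{e B�cklund for m^1 delta}) at $|z|\to\infty$ in analogy with (\ref{e expansion of m tilde}), namely
\begin{equation*}
    m^{(1)}_{\delta}(z;x)=\left[1+\frac{A^{(\delta)}(x)\,\mu^{(\delta)}(0)\,[A^{(\delta)}(x)]^{-1}}{z}\right]m^{(0)}_{\delta}(z;x)\left[\begin{array}{cc}\frac{z}{z-\overline{z}_1}&0\\0&\frac{z}{z-z_1}\end{array}\right],
\end{equation*}
with $\mu^{(\delta)}(0)=\mathrm{diag}(-\overline{z}_1,-z_1)$. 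Reading off the $1/z$-coefficients of the $(1,2)$- and $(2,1)$-entries and plugging them into the reconstruction formulas (\ref{e rec 1})--(\ref{e rec 2}) yields, exactly as in (\ref{e decomposition w})--(\ref{e decomposition v}), decompositions of the form
\begin{equation*}
    w^{(1)}_{\delta}=w^{(0)}_{\delta}+B_1^{(\delta)},\qquad e^{-\frac{1}{2\ii}\int_{+\infty}^{x}|u^{(1)}_{\delta}|^2dy}\,\partial_x v^{(1)}_{\delta}=e^{-\frac{1}{2\ii}\int_{+\infty}^{x}|u^{(0)}_{\delta}|^2dy}\,\partial_x v^{(0)}_{\delta}+B_2^{(\delta)},
\end{equation*}
where $B_1^{(\delta)}$ and $B_2^{(\delta)}$ are, up to factors depending only on $z_1$, of the form $a_{11}^{(\delta)}a_{12}^{(\delta)}/\det A^{(\delta)}$ and $a_{21}^{(\delta)}a_{22}^{(\delta)}/\det A^{(\delta)}$, respectively.

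Next I would verify that every ingredient is controlled on $\Real_-$. Since multiplication by $(z-\overline{z}_1)/(z-z_1)$ and by $\overline{\delta}_+\overline{\delta}_-$ is bounded on $H^1\cap L^{2,1}$, the data $\mathcal{S}^{(0)}_{\delta}=\{r^{(0)}_{\pm,\delta}\}$ still have norm controlled by $M$, so Corollary \ref{c u in H^11 and H^2 (negative half line)} delivers $u^{(0)}_{\delta}\in H^2(\Real_-)\cap H^{1,1}(\Real_-)$ with the expected bound. The text preceding the statement already establishes $(A^{(\delta)}(\cdot)-1)\in H^1(\Real_-)\cap L^{2,1}(\Real_-)$, and hence $(\det A^{(\delta)}(\cdot)-1)$ lies in the same space. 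Provided a uniform lower bound $|\det A^{(\delta)}(x)|^{-1}\leq C_M$ holds on $\Real_-$, the algebra of $H^1\cap L^{2,1}$ forces $B_j^{(\delta)}\in H^1(\Real_-)\cap L^{2,1}(\Real_-)$ with norm $\leq C_M$, and the conclusion follows by means of (\ref{e relations u v w}), exactly as in the proof of Corollary \ref{c u in H^11 and H^2 (positive half line - 1 pole)}.

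The main obstacle is therefore securing this lower bound on $|\det A^{(\delta)}(x)|$ for $x<0$, which is the direct analogue of Proposition \ref{p A inverse}. I would reproduce that proof: applying the symmetry (\ref{e symmetrie of m}) to $m^{(0)}_{\delta}$, I would re-express the column $(a_{12}^{(\delta)},a_{22}^{(\delta)})^{T}$ in terms of the complex conjugate of $(a_{11}^{(\delta)},a_{21}^{(\delta)})^{T}$ weighted by $w^{(0)}_{\delta}(x)$ and a factor involving $\overline{z}_1$. This yields a positive quadratic-form expression for $\det A^{(\delta)}(x)$ that can vanish only if a nonzero vector lies in $\ker m^{(0)}_{\delta}(z_1;x)$, which contradicts $\det m^{(0)}_{\delta}\equiv1$ (Remark \ref{r uniqueness + det=1}). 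A contradiction argument identical to the closing paragraph of Proposition \ref{p A inverse}, with Lemma \ref{l m(z0)-1 in in H^1 and L^2,1 (delta)} replacing Lemma \ref{l m(z0)-1 in H^11 and H^2} to control $\|m^{(0)}_{\delta}(z_1;\cdot)-1\|_{H^1(\Real_-)\cap L^{2,1}(\Real_-)}$, then promotes this non-vanishing to the required quantitative bound uniformly in $r^{(1)}_{\pm}$, completing the proof.
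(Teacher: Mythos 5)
Your proposal is correct and follows exactly the route the paper intends: the paper states this corollary without proof, relying on the observations that $(A^{(\delta)}(\cdot)-1)\in H^1(\Real_-)\cap L^{2,1}(\Real_-)$, the Bäcklund formula (\ref{e B�cklund for m^1 delta}), and Corollary \ref{c u in H^11 and H^2 (negative half line)}, precisely the ingredients you assemble, including the analogue of Proposition \ref{p A inverse} on $\Real_-$ via Lemma \ref{l m(z0)-1 in in H^1 and L^2,1 (delta)}. The only bookkeeping point to watch is that the symmetry (\ref{e symmetrie of m}) acquires $\delta^{\pm2}$ factors when transferred to $m^{(0)}_{\delta}$, which is exactly compensated by the $\delta^{2}(\overline z_1)$ and $\delta^{-2}(z_1)$ factors built into the definition of $A^{(\delta)}$.
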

We finish the section with the following observation which is obvious from the definition:
\begin{equation*}
    \begin{aligned}
        \lim_{|z|\to\infty}z\;[m^{(1)}(z;x)]_{12}= \lim_{|z|\to\infty}z\;[m^{(1)}_{\delta}(z;x)]_{12},\\
        \lim_{|z|\to\infty}z\;[m^{(1)}(z;x)]_{21}= \lim_{|z|\to\infty}z\;[m^{(1)}_{\delta}(z;x)]_{21}.
    \end{aligned}
\end{equation*}
It follows that $u^{(1)}_{\delta}=u^{(1)}$. In conclusion the Corollaries \ref{c u in H^11 and H^2 (positive half line - 1 pole)} and \ref{c u in H^11 and H^2 (negative half line - 1 pole)} yield the existence of the mapping
\begin{equation}\label{e u in H^11 and H^2}
    H^1(\Real)\cap L^{2,1}(\Real)\ni (r^{(1)}_-,r^{(1)}_+)\mapsto u^{(1)}\in H^2(\Real)\cap H^{1,1}(\Real).
\end{equation}
 
\section{Proof of Theorem \ref{t main} }\label{s proof}
Inductively we can add more and more poles to the \rh \ref{rhp m}. Using the B\"{a}cklund transformation for $x>0$ and $x<0$ as described in the previous section we are able to show the following Lemma.
\begin{lem}\label{l solvability of RHP N geq 2}
    For any functions $r_{\pm}\in H^1(\Real)\cap L^{2,1}(\Real)$ which satisfy (\ref{e relation r+ r-}) - (\ref{e r constraint}), for any pairwise distinct poles $\lambda_1,...,\lambda_N$ with $\im(\lambda_k^2)>0$ and for any nonzero constants $c_1,...,c_N$, the \rh \ref{rhp m} is solvable. Moreover the function $u$ which can be obtained from $m$ by using (\ref{e rec 1}) and (\ref{e rec 2}) lies in $H^2(\Real)\cap H^{1,1}(\Real)$. If, in addition, $\|r^{(1)}_+\|_{H^1\cap L^{2,1}}+\|r^{(1)}_-\|_{H^1\cap L^{2,1}}+|c_1|+...+|c_N|\leq M$ for some fixed $M>0$, then we also have the bound
    \begin{equation}\label{e u in H^2 and H^11 N=2,3,4...}
        \|u\|_{H^2(\Real)\cap H^{1,1}(\Real)}\leq C_M
    \end{equation}
    where the constant $C_M>0$ is depending on $M$ and $\lambda_k$ but not on $r_{\pm}$ and $c_k$.
\end{lem}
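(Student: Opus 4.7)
The plan is to proceed by induction on $N$. The cases $N=0$ and $N=1$ are already established by Lemma \ref{l solvability of RHP N=0} and Lemma \ref{l solvability of RHP N=1}, respectively, and Corollaries \ref{c u in H^11 and H^2 (positive half line - 1 pole)}, \ref{c u in H^11 and H^2 (negative half line - 1 pole)}, together with Corollaries \ref{c m^1(z0)-1 in H^11 and H^2} and \ref{c m^1(z0)-1 in L^21 and H^1 (delta)}, give precisely the package of statements that we will need to propagate: namely solvability, the $H^2\cap H^{1,1}$-bound on $u$ reconstructed via (\ref{e rec 1})--(\ref{e rec 2}), and the $H^1\cap L^{2,1}$-bound on $m(z_*;\cdot)-1$ at any fixed $z_*$ off the real axis away from the poles. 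The inductive hypothesis to propagate should therefore be the conjunction of these three statements for $N-1$ poles.

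For the inductive step, assume the result holds with $N-1$ poles and consider data $\{r_\pm;\lambda_1,\dots,\lambda_N;c_1,\dots,c_N\}$ satisfying the hypotheses. In analogy with (\ref{e r^0}), define the modified reflection coefficients
\begin{equation*}
    r^{(N-1)}_\pm(z):=r_\pm(z)\,\frac{z-\overline{z}_N}{z-z_N},
\end{equation*}
which still satisfy (\ref{e relation r+ r-})--(\ref{e r constraint}) and lie in $H^1\cap L^{2,1}$ with norms bounded in terms of $M$ and the distance of $z_N$ to the real line. By the induction hypothesis applied to $\{r^{(N-1)}_\pm;\lambda_1,\dots,\lambda_{N-1};c_1,\dots,c_{N-1}\}$, there is a unique solution $m^{(N-1)}(z;x)$ of the associated Riemann--Hilbert problem, and the function $m^{(N-1)}(z_N;\cdot)-1$ belongs to $H^1(\Real_+)\cap L^{2,1}(\Real_+)$ with a bound depending only on $M$, on $z_1,\dots,z_N$ and on $|c_1|+\dots+|c_{N-1}|$. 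Having added back the pole $z_N$ by the Bäcklund transformation (\ref{e B�cklund for m^1})
\begin{equation*}
    m(z;x):=A(x)\mu_N(z)A^{-1}(x)\,m^{(N-1)}(z;x)\,\mu_N^{-1}(z),\qquad \mu_N(z):=\mathrm{diag}(z-z_N,z-\overline{z}_N),
\end{equation*}
with $A(x)$ built from $m^{(N-1)}(z_N;x)$, $m^{(N-1)}(\overline{z}_N;x)$ and the norming constant $c_N$ exactly as in Section \ref{s adding a pole}, one verifies along the lines of Lemma \ref{l solvability of RHP N=1} that $m$ has the correct jump across $\Real$ and the correct residues at $z_1,\dots,z_N,\overline{z}_1,\dots,\overline{z}_N$: the poles already carried by $m^{(N-1)}$ are preserved by multiplication with $A\mu_NA^{-1}$ from the left (which is entire in $z$) and $\mu_N^{-1}$ from the right (which only affects $z_N,\overline{z}_N$), while the new residue condition at $z_N,\overline{z}_N$ follows from the computation (\ref{e Res of m tilde})--(\ref{e lim m tilde c}).

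The main obstacle, as in the $N=1$ case, is to show that $A(x)$ is invertible for every $x\in\Real$ and that $|\det A(x)|^{-1}$ is bounded uniformly in $x$ in terms of $M$ and the $\lambda_k$. On $\Real_+$ this will follow by repeating the symmetry argument of Proposition \ref{p A inverse} verbatim, using $\det m^{(N-1)}\equiv 1$ (which holds by Remark \ref{r uniqueness + det=1}) together with the inductive $H^1\cap L^{2,1}$ bound on $m^{(N-1)}(z_N;\cdot)-1$; this is the key new input compared with a naive induction and is precisely what was engineered in Corollary \ref{c m^1(z0)-1 in H^11 and H^2}. For $x<0$ one repeats the argument with the $\delta$-conjugated Riemann--Hilbert problem \rh\ref{rhp m^1 delta} and the Bäcklund transformation (\ref{e B�cklund for m^1 delta}), relying on Corollary \ref{c m^1(z0)-1 in L^21 and H^1 (delta)}; since the two reconstructions agree at infinity in $z$ by (\ref{e equivalent rhps}) and its analogue after Corollary \ref{c u in H^11 and H^2 (negative half line - 1 pole)}, they define a single potential on all of $\Real$.

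Once invertibility of $A(x)$ is established, the reconstruction formulas (\ref{e rec 1})--(\ref{e rec 2}) applied to $m$ give, exactly as in (\ref{e decomposition w})--(\ref{e decomposition v}),
\begin{equation*}
    w(x)=w^{(N-1)}(x)+B_1(x),\qquad e^{-\frac{1}{2\ii}\int_{+\infty}^x|u|^2dy}v_x(x)=e^{-\frac{1}{2\ii}\int_{+\infty}^x|u^{(N-1)}|^2dy}v^{(N-1)}_x(x)+B_2(x),
\end{equation*}
with correction terms $B_1,B_2$ built algebraically from the entries of $A(x)$ and $\det A(x)^{-1}$. The inductive bounds $w^{(N-1)},\,v^{(N-1)}_x\in H^{1,1}\cap (\text{primitive of }L^2)$ together with $(A(\cdot)-1),(\det A(\cdot)-1)\in H^1\cap L^{2,1}$ and the uniform lower bound on $|\det A|$ then upgrade to the desired $u\in H^2(\Real)\cap H^{1,1}(\Real)$ with the quantitative bound (\ref{e u in H^2 and H^11 N=2,3,4...}). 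To close the induction one must also reprove the off-axis estimate: for any fixed $z_*\notin\Real\cup\{z_1,\overline{z}_1,\dots,z_N,\overline{z}_N\}$, expanding (\ref{e B�cklund for m^1}) exactly as in Corollary \ref{c m^1(z0)-1 in H^11 and H^2} shows that $m(z_*;\cdot)-1\in H^1(\Real_+)\cap L^{2,1}(\Real_+)$ with the analogous bound, and the $\delta$-version handles $\Real_-$. This provides the hypothesis for the next induction step, and the lemma follows.
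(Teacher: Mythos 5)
Your proposal is correct and follows exactly the route the paper intends: the paper's own ``proof'' of this lemma is only the two-sentence remark that one adds poles inductively via the B\"{a}cklund transformations of Section \ref{s adding a pole}, and your write-up fills in precisely the steps (Blaschke modification of $r_{\pm}$, invertibility of $A$, reconstruction, and propagation of the off-axis bound on $m(z_*;\cdot)-1$) that make that induction close. The only detail worth flagging is that conjugation by $\mu_N^{-1}$ also rescales the residue conditions at the remaining poles, so the intermediate problem carries norming constants $c_k\frac{z_k-z_N}{z_k-\overline{z}_N}$ rather than $c_k$ themselves — harmless, since the induction hypothesis allows arbitrary nonzero constants with comparable moduli.
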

Now we argue analogously to \cite{Pelinovsky2016} and assume that a local solution $u(\cdot,t)\in H^2(\Real)\cap H^{1,1}(\Real)\cap\pazocal{G}$ provided by the results in \cite{Tsutsumi1980} and \cite{Hayashi1992} blows up in a finite time. That is
$$
\lim_{t\uparrow T_{\text{max}}}\|u(\cdot,t)\|_{H^2(\Real)\cap H^{1,1}(\Real)}=\infty
$$
for a maximal existence time $T_{\text{max}}>0$. By (\ref{e u in H^2 and H^11 N=2,3,4...}) we conclude
$$
\lim_{t\uparrow T_{\text{max}}}\left[\|r_{+}(\cdot,t)\|_{H^1(\Real)\cap L^{2,1}(\Real)}+\|r_{-}(\cdot,t)\|_{H^1(\Real)\cap L^{2,1}(\Real)}+\sum_{k=1}^N|c_k|\right]=\infty,
$$
which contradicts the time evolution of the reflection coefficient and of the norming constants given in Lemma \ref{l time dependence scattering data}. This argument yields the proof of Theorem \ref{t main}.

\bibliographystyle{alpha}
\bibliography{lit}
\end{document}